\theoremstyle{plain}
\newtheorem{theorem} {Theorem} [section]
\newtheorem{lemma} [theorem]{Lemma}
\newtheorem{proposition}[theorem]{Proposition}
\theoremstyle{definition}
\newtheorem{definition} [theorem] {Definition}
\newtheorem{example} [theorem] {Example}
\newtheorem{notation} [theorem]  {Notation}
\newtheorem{remark}[theorem]{Remark}
\def\Sym{\operatorname{Sym}}
\def\Gr{\operatorname{Gr}}
\def\GL{\operatorname{GL}}
\def\aut{\mathfrak{aut}}
\def\Hom{\operatorname{Hom}}
\def\Ker{\operatorname{Ker}}
\def\Symb{\operatorname{Symb}}
\def\Aut{\operatorname{Aut}}
\def\span{\operatorname{span}}
\def\PSp{\operatorname{PSp}}
\def\Sp{\operatorname{Sp}}
\title[Characterizations of horospherical varieties]{Characterizations of smooth projective horospherical varieties of Picard number one}
\date{\today}
\author[J. Hong]{Jaehyun Hong}
\address{Center for Complex Geometry,  Institute for Basic Science (IBS), 55  Expo-ro, Yuseong-gu, Daejeon, Korea 34126}
\email{jhhong00@ibs.re.kr}
\author[S.-Y. Kim]{Shin-Young Kim}
  \address{Center for Geometry and Physics, Institute for Basic Science (IBS), 77 Cheongam-ro, Nam-gu, Pohang, Gyeongbuk, Korea 37673}
\email{shinyoungkim@ibs.re.kr}
\keywords{geometric structure, local equivalence, horospherical variety, Cartan geometry}
\subjclass[2000]{32M12, 53A55, 14J45}
\begin{document}

\begin{abstract}
Let $X$ be a  smooth projective horospherical variety of Picard number one.  We show that a  uniruled projective manifold of Picard number one is biholomorphic to $X$ if its variety of minimal rational tangents at a general point is projectively equivalent to that of $X$. %By a Cartan connection from
To get a local flatness of the geometric structure arising from the variety of minimal rational tangents, we apply the methods of $W$-normal complete step prolongations. We compute the associated Lie algebra cohomology space of degree two   and show the vanishing of holomorphic sections of the vector bundle having this cohomology space as a fiber.
\end{abstract}

\maketitle

\section{Introduction}

 Let $M$ be a projective uniruled algebraic variety over $\mathbb C$.
Given a covering family $\mathcal K$ of minimal rational curves,  by collecting the tangent directions of rational curves in $\mathcal K$ passing through each point in $M$, we define  a subbundle $\mathcal C(M)$ of the projectivizaition $\mathbb P(TM) $ of the tangent bundle, called the variety of minimal rational tangents associated to $\mathcal K$. For a precise definition, see Definition \ref{d.vmrt}. 

The theory of varieties of minimal rational tangents, introduced by Hwang and Mok, has played an important role in the complex geometry of a uniruled projective manifold of Picard number one, that is, a Fano manifold of Picard number one. A  general philosophy in this theory is that one should be able to recover the complex geometry of a uniruled projective manifold of Picard number
one,   such as the deformation rigidity and the stability of the tangent bundles, from the projective geometry of its varieties of minimal rational tangents. There are many results manifesting this philosophy, one of which is recognizing rational homogeneous varieties of Picard number one by their varieties of minimal rational tangents.

\begin{theorem} [Main Theorem of \cite{Mk08}, Main Theorem of \cite{HH}] \label{previous results} Let $X$ be a rational homogeneous variety associated to a long root and let $\mathcal C_o(X) \subset \mathbb PT_{o}X$ be the variety of minimal rational tangents at a base point $o \in X$. Let $M$ be a Fano manifold of Picard number one and $\mathcal C_x(M) \subset \mathbb PT_xM$ be the variety of minimal rational tangents at a general point $x \in X$ associated to a minimal dominant rational component. Suppose that $\mathcal C_x(M) \subset \mathbb PT_xM$ are projectively equivalent to $\mathcal C_o (X)\subset \mathbb PT_oX$ for general $x \in M$. Then $M$ is biholomorphic to $X$.
\end{theorem}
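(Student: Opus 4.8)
The plan is to split the problem into a local part and a global part. The global part is exactly what the Cartan--Fubini extension theorem (Theorem 1.2 of \cite{HM01}) is built to handle: once one produces a biholomorphism $\phi\colon U \to V$ between connected Euclidean open subsets $U \subset M$ and $V \subset X$ that carries the variety of minimal rational tangents structure $\mathcal C(M)|_U$ onto $\mathcal C(X)|_V$, the fact that $M$ and $X$ are Fano of Picard number one with an irreducible, nonlinear VMRT allows $\phi$ to be extended to a biholomorphism of all of $M$ onto $X$. Thus the real content is the \emph{local} assertion that near a general point $M$ is biholomorphic to $X$ compatibly with minimal rational tangents.

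For the local assertion I would first observe that the hypothesis endows a Zariski-open subset of $M$ with a geometric structure modeled on $(\mathcal C_o(X) \subset \mathbb PT_oX)$: at each general $x$ the projective isomorphism $\mathcal C_x(M) \cong \mathcal C_o(X)$ gives a reduction of the frame bundle to the group $G_0 = \Aut(\hat{\mathcal C}_o(X)) \subset \GL(T_oX)$ of linear automorphisms of the affine cone over the VMRT. Since $X = G/P$ is homogeneous, its own VMRT structure is the flat model, carrying the homogeneous Cartan geometry of type $(G,P)$ attached to the $|k|$-grading $\mathfrak g = \bigoplus_j \mathfrak g_j$ determined by the marked long simple root. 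The distribution $D \subset TM$ spanned by the cones $\hat{\mathcal C}_x(M)$, together with its iterated brackets, reconstructs a filtration of $TM$ whose associated graded is pointwise isomorphic to $\mathfrak m = \bigoplus_{j<0}\mathfrak g_j$; moreover the minimal rational curves are tangent to $D$, which supplies both the regularity of the structure and a supply of ``model lines'' compatible with it.

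The decisive step is to prove that this structure is \emph{flat}, i.e. locally isomorphic to that of $X$. Here I would invoke Tanaka theory: the hypothesis that $X$ is associated to a \emph{long} root is what guarantees that the structure prolongs to the canonical normal Cartan connection of type $(G,P)$, whose universal prolongation is the finite-dimensional algebra $\mathfrak g = \aut(\mathfrak m, \mathfrak g_0)$. The complete obstruction to flatness is then the harmonic curvature, which by Kostant's theorem is a section of a bundle associated to the positive-homogeneity part of $H^2(\mathfrak m, \mathfrak g)$. I expect this to be the main obstacle: one must show that every such curvature component vanishes. The input should be the rigidity of the second fundamental form of $\mathcal C_o(X)$ together with the integrability coming from the minimal rational curves, forcing the lowest-homogeneity harmonic curvature to vanish and then, by the Bianchi identities and induction on homogeneity, every component to vanish.

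Once flatness is established, the canonical Cartan connection on a neighborhood $U$ of a general point of $M$ is equivalent to the flat one on a neighborhood $V$ of $o \in X$, and this equivalence integrates to a biholomorphism $\phi\colon U \to V$ matching the VMRT cones. Feeding $\phi$ into the Cartan--Fubini extension theorem then yields the global biholomorphism $M \cong X$, completing the proof. I would note that the curvature-vanishing step is the only place where the explicit projective geometry of $\mathcal C_o(X)$---through the computation of $H^2(\mathfrak m,\mathfrak g)$, equivalently the prolongations of its second fundamental form---genuinely enters, and this is where any case-by-case analysis would be unavoidable.
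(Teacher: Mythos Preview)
Your overall architecture is correct and matches the paper's own summary: the paper does not prove Theorem~\ref{previous results} in detail (it is quoted from \cite{Mk08} and \cite{HH}) but explains that it reduces to a local VMRT-preserving biholomorphism obtained via Tanaka's Cartan connection \cite{Ta}, after which Cartan--Fubini extension (Theorem~\ref{Cartan-Fubini}) globalizes. Your outline follows exactly this scheme.

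The one place where your sketch diverges from the actual mechanism is the curvature-vanishing step. You propose that the lowest-homogeneity harmonic curvature vanishes from rigidity of the second fundamental form and integrability, and then higher components die by Bianchi identities and induction. In \cite{Mk08}, \cite{HH}, and in this paper's parallel argument for the horospherical case (Lemma~\ref{lem:section vanishes}), the vanishing is obtained differently: one computes the nonzero pieces $H^2(\mathfrak m,\mathfrak g)_k$ explicitly via Kostant, realizes the obstruction as a global section of the associated bundle $\mathcal H^2_k$ on a Zariski-open set $M^0$ containing general minimal rational curves, and then restricts to such a curve $C$; the splitting type of the relevant bundles along $C$ (governed by $f^*TM = \mathcal O(2)\oplus\mathcal O(1)^p\oplus\mathcal O^q$ and its refinements) forces the section to land in an intersection of tangent spaces of $\mathcal C_x$ that is zero. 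Thus the minimal rational curves themselves, not internal Bianchi-type identities, kill the curvature. This is also why one must first enlarge the open set where the $G_0$-structure is defined to one containing complete members of $\mathcal K$ (via the parallel-transport results in Propositions~\ref{parallel transport of II} and~\ref{parallel transport of III}), a step your sketch passes over.
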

Here, for a rational  homogeneous variety $X$ associated to a long root, there is a canonical choice of a covering family $\mathcal  K $ of minimal rational curves and $\mathcal C(X)$ is the variety of minimal rational tangents associated with $\mathcal K $.

\medskip

Two ingredients of the proof of Theorem \ref{previous results}  are the following:  
\begin{enumerate}
\item[$\bullet$] (Local equivalence problem)  existence of a biholomorphism $\phi$ from a connected open subset $U$ of $M$ onto an open subset of $X$ whose differential $d\phi$ maps $\mathcal C_x(M)$ onto $\mathcal C_{\phi(x)}(X)$ for any $x \in U$;
\item[$\bullet$] (Extension problem)   extension of a biholomorphism   between connected   open subsets of two Fano manifolds of Picard number one which preserves varieties of minimal rational tangents.
\end{enumerate}

\noindent
The second problem has an answer we can apply to any Fano manifold  of Picard number one under mild geometric conditions (Theorem \ref{Cartan-Fubini}), while the first problem should be treated, case by case, depending on $X$. We will focus on this local equivalence problem in this paper.

Given a quasi-homogeneous variety $X$ of Picard number one which is uniruled and smooth,  we may ask the same question, whether we can characterize $X$, or, the open dense orbit $X^0$ of the automorphism group of $X$, in terms of the variety of minimal rational tangents. To make this a reasonable question,
it is natural to assume that there is a minimal rational curve in $X$ contained in $X^0$ completely. This happens, for example, when the boundary $X \backslash X^0$ has codimension $\geq 2$ in $X$. Among such quasi-homogeneous varieties, we will consider smooth projective horospherical varieties of Picard number one.

For a reductive algebraic group $L$, a normal $L$-variety $X$ is said to be \emph{horospherical} if it has an open $L$-orbit $L/H$ whose isotropy group $H$ contains the unipotent part of a Borel subgroup of $L$. Then the normalizer $P$ of $H$ in $L$ is a parabolic subgroup of $L$ and the open orbit $L/H$ is isomorphic to a $(\mathbb C^*)^r$- bundle over the rational homogeneous variety $L/P$ for some $r \in \mathbb Z_{\geq 0}$.

Pasquier classified smooth projective horospherical varieties of Picard number one and obtained the following result.

\begin{theorem}[Theorem 0.1  of \cite{Pa}] \label{thm:Horospheical pasquier}
Let $L$ be a reductive group.
 Let $X$ be a smooth nonhomogeneous projective horospherical $L$-variety with Picard number one.
 Then $X$ is uniquely determined by its two closed $L$-orbits $ Y$ and $Z$, isomorphic to $L/P_Y$ and $L/P_Z$, respectively; and $(L, P_Y, P_Z)$ in one of the triples listed below.
 \begin{enumerate}
   \item[\rm(1)] $(B_m,{\alpha_{m-1}},{\alpha_{m}})$ for $m\geq3$;
   \item[\rm(2)] $(B_3,{\alpha_{1}},{\alpha_{3}})$;
   \item[\rm(3)] $(C_m,{\alpha_{i+1}},{\alpha_{i}})$ for $m\geq2$, $i\in \{1,\ldots, m-1\}$;
   \item[\rm(4)] $(F_4,{\alpha_{2}},{\alpha_3})$;
   \item[\rm(5)] $(G_2,{\alpha_{2}},{\alpha_1})$.

 \end{enumerate}

% Furthermore, $\Aut(X)$ has two orbits in $X$,   $Z$ and the complement of $Z$ in $X$.

Here, we take the convention that $\alpha_3, \alpha_4$ are short roots  when $L$ is $F_4$ and $\alpha_1$ is a short root when $L$ is $G_2$.
\end{theorem}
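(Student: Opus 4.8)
The plan is to invoke the Luna--Vust theory of spherical embeddings, specialized to the horospherical case, which reduces the classification of $L$-equivariant embeddings of a fixed open orbit $L/H$ to a combinatorial classification by colored fans in $N_{\mathbb R} = \Hom(\Lambda,\mathbb Z)\otimes\mathbb R$, where $\Lambda = X(P/H)$ is the weight lattice of the torus $P/H$ and $P = N_L(H)$. For a horospherical space the valuation cone is all of $N_{\mathbb R}$, so the only constraint is that the colors $\mathcal D$ --- indexed by the simple roots $\alpha$ whose reflection does not lie in the Weyl group of the Levi factor of $P$ --- have prescribed images $\rho(D_\alpha)\in N$ determined by $\langle \rho(D_\alpha),\chi\rangle = \langle \chi,\alpha^\vee\rangle$ for $\chi\in\Lambda$. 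A first, standard, reduction lets me take $L$ simple (adjoint): a nontrivial product structure, or a simple factor disjoint from the relevant roots, would force the Picard number above one.

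Next I would translate the three hypotheses into the combinatorics. Projectivity forces the colored fan to be complete. Smoothness gives $\operatorname{Cl}(X)=\operatorname{Pic}(X)$, and the Picard number is read off from the exact sequence $0\to\Lambda\to\mathbb Z^{\mathcal D}\oplus\mathbb Z^{\{L\text{-invariant divisors}\}}\to\operatorname{Cl}(X)\to 0$, so Picard number one means $\#\mathcal D + \#\{L\text{-invariant divisors}\} = \operatorname{rk}\Lambda+1$. Since $\operatorname{rk}\Lambda\le\#\mathcal D$, there is at most one $L$-invariant divisor; pursuing the nonhomogeneous case one is led to $\operatorname{rk}\Lambda=1$ with exactly two colors $D_Y,D_Z$ and no $L$-invariant divisor. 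Their images are then the two primitive generators $+1$ and $-1$ of $N\cong\mathbb Z$, and the complete fan consists of the two colored rays $(\mathbb R_{\ge 0},\{D_Y\})$ and $(\mathbb R_{\le 0},\{D_Z\})$. These two maximal colored cones produce exactly two closed orbits, the homogeneous varieties $L/P_Y$ and $L/P_Z$ for the maximal parabolics $P_Y,P_Z$ associated to the two simple roots $\alpha_Y,\alpha_Z$ underlying the colors, with $P=P_Y\cap P_Z$. Concretely this $X$ is realized as $\overline{L\cdot[v_{\omega_{\alpha_Y}}+v_{\omega_{\alpha_Z}}]}\subset\mathbb P(V_{\omega_{\alpha_Y}}\oplus V_{\omega_{\alpha_Z}})$.

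The heart of the argument is imposing smoothness. By the local structure theorem for spherical varieties, a neighborhood of a point of $Y=L/P_Y$ is $L$-equivariantly isomorphic to $(P_Y)_u\times S_Y$, where $(P_Y)_u$ is the unipotent radical and $S_Y$ is an affine slice governed by the colored cone $(\mathbb R_{\ge 0},\{D_Y\})$ together with the transverse direction, which here is the restriction to the Levi of $P_Y$ of the fundamental module $V_{\omega_{\alpha_Z}}$; symmetrically at $Z$ with $\alpha_Y,\alpha_Z$ interchanged. Smoothness of $X$ is thus equivalent to smoothness of the two slices $S_Y,S_Z$, which Pasquier's smoothness criterion converts into an explicit numerical condition on the Cartan integers $\langle\omega_{\alpha_Y},\alpha_Z^\vee\rangle$, $\langle\omega_{\alpha_Z},\alpha_Y^\vee\rangle$ and on the shape of the sub-diagram of the Dynkin diagram joining $\alpha_Y$ to $\alpha_Z$.

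I expect the main obstacle, and the bulk of the work, to be the resulting finite but delicate case analysis: one runs over every simple type $A_m,B_m,C_m,D_m,E_6,E_7,E_8,F_4,G_2$ and every unordered pair of nodes $\{\alpha_Y,\alpha_Z\}$, checking the smoothness condition on \emph{both} slices. Almost every pair should fail --- a slice acquires a singular point whenever the transverse module is too large or the two nodes are positioned incompatibly --- leaving only the five families $(B_m,\alpha_{m-1},\alpha_m)$, $(B_3,\alpha_1,\alpha_3)$, $(C_m,\alpha_{i+1},\alpha_i)$, $(F_4,\alpha_2,\alpha_3)$ and $(G_2,\alpha_2,\alpha_1)$. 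Finally, since the colored fan, and hence by Luna--Vust the embedding, is determined by the datum $(L,P_Y,P_Z)$, one concludes that $X$ is uniquely determined by the triple, completing the classification.
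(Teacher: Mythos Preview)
The paper does not prove this theorem at all: it is quoted verbatim from Pasquier \cite{Pa} as background input, with no argument supplied beyond the citation. So there is no ``paper's own proof'' to compare against; the paper simply invokes the result.

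Your sketch is, in outline, precisely Pasquier's original argument: reduce via Luna--Vust theory for horospherical varieties to a combinatorial problem about colored fans, use the Picard-number-one condition to force rank one with exactly two colors and no $L$-stable divisor, realize $X$ as the orbit closure in $\mathbb P(V_{\omega_Y}\oplus V_{\omega_Z})$, and then run the smoothness criterion through a Dynkin-diagram case check. That is the standard and, as far as I know, the only route in the literature. One small caution: your dichotomy ``at most one $L$-invariant divisor'' followed by ``pursuing the nonhomogeneous case one is led to $\operatorname{rk}\Lambda=1$ with exactly two colors and no $L$-invariant divisor'' skips a step --- Pasquier also treats, and must rule out or absorb, the configuration with one color and one $L$-stable divisor, which leads either to homogeneous spaces or to non-smooth varieties. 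Otherwise the plan is sound, and since the paper under review offers no competing argument, there is nothing further to contrast.
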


%......
The main result in this paper is to characterize smooth horospherical varieties %of type $(B_m, \alpha_{m-1}, \alpha_m)$ or of type $(F_4, \alpha_2, \alpha_3)$
by using the variety of minimal rational tangents as Theorem \ref{previous results}.
As before, there is a canonical choice of a covering family of minimal rational curves on them (Proposition \ref{vmrt of X}).

\begin{theorem} \label{main results}

 Let $X$ be a smooth projective horospherical variety of Picard number one. Let $\mathcal C_o(X)$ denote the variety of minimal rational tangents of $X$ at a base point $o$ of $X$.
 Let $M$ be a Fano manifold of Picard number one and $\mathcal C_x(M) \subset \mathbb PT_xM$ be the variety of minimal rational tangents at a general point $x \in X$ associated to a minimal dominant rational component. Suppose that $\mathcal C_x(M) \subset \mathbb PT_xM$ are projectively equivalent to $\mathcal C_o (X)\subset \mathbb PT_oX$ for general $x \in M$. Then $M$ is biholomorphic to $X$.
\end{theorem}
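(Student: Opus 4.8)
The plan is to follow the strategy behind Theorem~\ref{previous results}: reduce the global biholomorphism to a local equivalence of the geometric structures cut out by the varieties of minimal rational tangents, solve that local problem by producing a \emph{flat} Cartan connection, and then globalize. First I would invoke the Cartan--Fubini extension theorem of \cite{HM01}: once one produces a biholomorphism $\phi$ from a connected open subset $U \subset M$ onto an open subset of $X$ with $d\phi\bigl(\mathcal C_x(M)\bigr) = \mathcal C_{\phi(x)}(X)$ for all $x \in U$, the Picard number one hypothesis together with the nonlinearity and nondegeneracy of $\mathcal C_o(X)$ forces $\phi$ to extend to a biholomorphism $M \cong X$. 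Thus everything reduces to the local equivalence of the cone structures $\mathcal C(M) \subset \mathbb P(TM)$ and $\mathcal C(X) \subset \mathbb P(TX)$ near general points. Here the explicit description of $\mathcal C_o(X)$ for the two types $(B_m,\alpha_{m-1},\alpha_m)$ and $(F_4,\alpha_2,\alpha_3)$ enters, since one must check that $\mathcal C_o(X)$ generates a bracket-generating distribution and is nonlinear, so that both the Cartan--Fubini machinery and Tanaka's theory apply.

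Next I would set up the relevant Cartan geometry on the open orbit $X^0$ of $\Aut(X)$, which is homogeneous even though $X$ is not, and on which the cone structure is $\Aut(X)$-invariant. This structure is governed by a graded Lie algebra whose non-positive part $\mathfrak g_{-} \oplus \mathfrak g_0$ is the symbol of the filtration determined by $\mathcal C_o(X)$, with $\mathfrak g_0 = \aut(\hat{\mathcal C}_o)$ the algebra of infinitesimal linear automorphisms of the affine cone $\hat{\mathcal C}_o$. The crucial algebraic input is to compute the Tanaka prolongation of $\mathfrak g_{-} \oplus \mathfrak g_0$, to show it is finite-dimensional, and to identify it with $\operatorname{Lie}(\Aut(X))$. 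This finiteness (finite type) guarantees, following \cite{Ta}, that any manifold carrying a cone structure with the same symbol admits a canonical Cartan connection modeled on $X^0$. In particular $M$ does, because the hypothesis $\mathcal C_x(M) \cong \mathcal C_o(X)$ makes the $\mathfrak g_0$-structures coincide pointwise; the curvature of this connection is then the complete obstruction to local equivalence of $M$ with $X^0$.

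The main obstacle is to prove that this curvature vanishes on $M$. The curvature is a section of a bundle associated to the positive-degree part of the Lie algebra cohomology $H^2(\mathfrak g_{-},\mathfrak g)$, and because $X$ is non-homogeneous the standard Kostant/Bott--Borel--Weil computation available for parabolic geometries does not apply directly, so this cohomology for the mixed (non-parabolic) model must be analyzed by hand. I would then kill the curvature geometrically: the minimal rational curves of $M$ are the \emph{lines} of the structure, their tangent directions sweep out $\mathcal C(M)$, and the restriction of the connection to each such curve can be trivialized, so the curvature, viewed along a minimal rational curve, is a holomorphic section of a bundle of negative degree on $\mathbb P^1$ and hence vanishes on all VMRT directions. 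A representation-theoretic argument---that the curvature module is generated over $\mathfrak g_0$ by these directions, so a $\mathfrak g_0$-equivariant tensor annihilating them is identically zero---would then force the entire curvature to vanish.

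Once the canonical Cartan connection on $M$ is flat, it is locally equivalent to the flat model furnished by $X^0$, yielding the desired VMRT-preserving local biholomorphism $U \to X^0 \subset X$; Cartan--Fubini extension upgrades this to a biholomorphism $M \cong X$, completing the argument. I expect the two delicate points to be the prolongation computation identifying $\operatorname{Lie}(\Aut(X))$ and, above all, the curvature-vanishing step: the non-homogeneity of $X$ removes the cohomological shortcuts that are available in the rational homogeneous setting of Theorem~\ref{previous results}, so the argument must rely on a careful interplay between the positivity of minimal rational curves and the $\mathfrak g_0$-module structure of the curvature.
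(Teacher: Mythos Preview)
Your overall strategy matches the paper's: reduce to local equivalence via Cartan--Fubini, set up a $G_0$-structure whose prolongation is $\operatorname{Lie}(\Aut(X))$, identify the obstruction with positive-degree classes in $H^2(\mathfrak g_-,\mathfrak g)$, and kill it using the behavior along minimal rational curves. However, there is a genuine gap that the paper devotes much of Sections~\ref{sect:horospherical BF type} and~\ref{sect:local equivalence} to filling and that you pass over.

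You implicitly assume that, because $\mathcal C_x(M)\cong\mathcal C_o(X)$ for \emph{general} $x$, the cone structure is defined on an open set $M^0$ through which a general minimal rational curve passes \emph{entirely}. This is not automatic: a priori the bad locus could have codimension one. The paper shows (Proposition~\ref{parallel transport of vmrt}) that the projective type of $\mathcal C_y(M)\subset\mathbb P D_y$ is constant along any standard curve $C$, which rests on first proving that the relative second \emph{and third} fundamental forms are constant along $C^\sharp$ (Propositions~\ref{parallel transport of II}--\ref{parallel transport of III}, the latter requiring the explicit verifications of Lemma~\ref{parallel transport of III-X case}), and then running a nontrivial limiting argument (Propositions~\ref{characterization of vmrt F}--\ref{characterization of vmrt B}) to exclude degeneration. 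One must also check that the symbol algebra $\Symb_y(D)\cong\mathfrak m$ along all of $C$, which the paper does via the splitting types of $\mathcal V|_C$, $\mathcal W|_C$ (Lemma~\ref{lem:splitting type of V}). Without this extension step, your restriction-to-curves argument has no domain on which to run.

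A smaller imprecision: the curvature does not literally restrict to a section of a negative-degree bundle on $\mathbb P^1$. The paper first pins down $H^2(\mathfrak m,\mathfrak g)_k$ for $k\geq 1$ inside explicit bundles such as $\wedge^2 D^*\otimes\mathcal V$ (Proposition~\ref{prop:H2 vanishing}, which is where the bulk of the new computation lies, since Kostant's theorem does not apply to the non-semisimple $\mathfrak g$), and then argues that any global section, when paired with $TC\wedge D|_C$, lands in $T_\beta\widehat{\mathcal C}_x\cap\mathcal V_x$, whose intersection over all $\beta\in\mathcal C_x$ is forced to vanish by $G_0$-irreducibility (Lemma~\ref{lem:section vanishes}). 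This is close in spirit to what you sketch, but the reduction to those specific bundles is essential and is itself the heart of Section~\ref{sect:H2 cohomology}.
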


For the smooth horospherical varieties  $(C_m, \alpha_{i+1}, \alpha_i)$ and  $(G_2, \alpha_2, \alpha_1)$,  Hwang and   Li  solve the characterization problem,  proving   that these horospherical varieties are characterized by their varieties of minimal rational tangents (\cite{HL19}, \cite{HL21}). In this paper we prove the same characterization  for other smooth horospherical varieties in the list of Theorem \ref{thm:Horospheical pasquier}.

As we said at the beginning, the main issue is how to obtain the local equivalence of geometric structures modeled on $\mathcal C(X) \subset \mathbb P(TX)$.

One effective way  to solve the local equivalence problem of geometric structures is by constructing  Cartan connections. In the  cases dealt with  Theorem \ref{previous results}, the existence of a  Cartan connection solving local equivalence problem of geometric structures modeled on the subbundle $\mathcal C(X) \subset \mathbb P(TX)$  was proved by  Tanaka (Theorem \ref{theorem tanaka}). Later on, Morimoto extended the theory of Cartan connections to geometric structures satisfying the condition (C) (Theorem \ref{theorem morimoto}).   Hwang and Li developed a way to solve the local equivalence problem from the vanishing of certain types of sections (Theorem \ref{theorem HwangLi}).
In this paper we use  the prolongation methods
 (Theorem \ref{thm:prolongation methods} and Proposition \ref{prop:prolongation methods}) together with the computation of the Lie algebra cohomology. 

The computation of Lie algebra cohomology $H^2(\mathfrak m, \Gamma)$ is of independent interest, where $\mathfrak m$ is the negative part of a graded Lie algebra $\mathfrak g$ and $\Gamma$ is a representation of $\mathfrak g$. When $\mathfrak g$ is not semisimple, we cannot apply the theory of Kostant directly to compute it (\cite{Ko61}). We suggest a way to compute the Lie algebra cohomology $H^2(\mathfrak m, \Gamma)$ by reducing it to the computation of  the Lie algebra cohomology associated with the restriction of the action to the semisimple part of $\mathfrak g$. It is worth comparing our procedure with the Lyndon-Hochschild-Serre spectral sequence, which is a tool to compute the Lie algebra cohomology when $\mathfrak g$ is not semisimple (\cite{HS53}, \cite{HS73}).

 The horospherical variety of type $(B_3, \alpha_1, \alpha_3)$ is a smooth hyperplane section of the Spinor variety $\mathbb S_{5}$ of dimension 10 and its variety of minimal rational tangents is a hyperplane section of the Grassmannian $\Gr(2,5)$. Any two smooth hyperplane sections of $\mathbb S_5$ are projectively equivalent  (\cite{FH18}). Theorem \ref{main results} implies that the smooth hyperplane section of $\mathbb S_5$ can be characterized by the variety of minimal rational tangents.
 However, a smooth codimension two linear section   of $\mathbb S_5$ does not have this property. Indeed, there are two non-isomorphic smooth codimension two linear sections of $\mathbb S_{5}$, both of which are quasi-homogeneous (Proposition 4.8 of \cite{BFM}) and have the same variety of minimal rational tangents  being codim 2 linear section of $\Gr(2,5)$.
\\

This paper is organized as follows.
We review the theory of varieties of minimal rational tangents and collect results on the second fundamental forms and the third fundamental forms of varieties of minimal rational tangents in Section \ref{sect:vmrt}. Section \ref{sect:geometric structures} devotes to the theory of geometric structures, correspondence between $G_0$-structures and ${\bf S}$-structures, and we review the prolongation methods which gives  the local  equivalence under the vanishing of sections of vector bundles associated to Lie algebra cohomology of degree 2. We describe the varieties of minimal rational tangents of smooth horospherical varieties of Picard number one in Section  \ref{horospherical varieties}.

In the remaining sections, we prove Theorem \ref{main results} in the case when $X$ is  $(B_m, \alpha_{m-1}, \alpha_m)$ or $(F_4, \alpha_2, \alpha_3)$  or  $(B_3, \alpha_{1}, \alpha_3)$. For notational purposes, we break the case into two parts: The first two cases and the last case.
Restricting ourselves to horospherical varieties  $(B_m, \alpha_{m-1}, \alpha_m)$ and $(F_4, \alpha_2, \alpha_3)$, we confirm that the requirements to apply   theories in Section 2 and Section 3 hold (Section \ref{sect:horospherical BF type}), we  compute the Lie algebra cohomology $H^2(\mathfrak m, \mathfrak g)$ (Section \ref{sect:H2 cohomology}), and we  show  that any section of the associated vector bundle  with fiber $H^2(\frak m,\frak g)$ vanishes, which completes the proof of main Theorem  (Section \ref{sect:local equivalence}). In the final section, we repeat the same process  for $X=(B_3, \alpha_{1}, \alpha_3)$. The computations are relatively shorter since the structure of the Lie algebra   of $\Aut(X)$ and the projective geometry of the variety of minimal rational tangents are less complicated. \\

\noindent{ \bf Acknowledgements.}
We would like to appreciate  Qifeng Li and Jun-Muk Hwang for  valuable discussions, particularly for sharing the ideas in the preprint for the   $G_2$-type case. We   are also grateful to Baohua Fu for letting us know an  example of two non-isomorphic quasi-homogeneous varieties with the same variety of minimal rational tangents.
The first named author was supported by the Institute of Basic Science IBS-R032-D1. The second named author was  supported by the Institute of Basic Science IBS-R003-D1.

\section{Varieties of minimal rational tangents} \label{sect:vmrt}

\subsection{Definitions and properties}

We recall definitions and properties of varieties of minimal rational tangents. For details, see  Section 1 of \cite{HM99} and Section 1 of \cite{Mk08}.

\begin{definition} \label{d.vmrt} Let $M$ be a uniruled projective manifold and let $\mathcal K$ be a minimal rational component, i.e., an irreducible component of the space of rational curves on $M$ such that the rational curves in $\mathcal K$ sweep out a Zariski open subset of $M$ and such that, with respect to a fixed ample line bundle on $M$, the degree of the rational curves in $\mathcal K$  is minimal among all such irreducible components. Any rational curve $C$ in $\mathcal K$ can be represented by a parameterized rational curve $f: \mathbb P^1 \rightarrow M$ so that $C=f(\mathbb P^1)$.

 Denote    by $\rho:\mathcal U \rightarrow \mathcal K$ and $\mu:\mathcal U \rightarrow M$  the universal family associated to $\mathcal K$.
 We call a rational curve $C$ \emph{standard} if $f^*TM$ is decomposed as  $f^*TM=\mathcal O(2) \oplus \mathcal O(1)^p \oplus \mathcal O^q$ where $f:\mathbb P^1 \rightarrow M$ is a parameterized rational curve representing $C $.
Then there is a smallest closed subvariety $E \subset M$   such that, for every $x \in M\backslash E$, generic rational curve $C$ in $\mathcal K$ passing through $x$ is standard.
We call $E$    the \emph{bad locus} of $\mathcal K$

Define the tangent map $\tau: \mathcal U \dashrightarrow \mathbb P(TM)$ by $\tau([f])=[df(T_0\mathbb P^1)]$ for any $f:\mathbb P^1 \rightarrow M$ with $df(0)\not=0$.
For $x \in M$ the image $\mathcal C_x$ of the rational map $\tau_x: \mathcal U_x:=\mu^{-1}(x) \dashrightarrow \mathbb P(T_xM)$ is called the \emph{variety of minimal rational tangents} of $M$ at $x$.
The proper image $\mathcal C \subset \mathbb PTM$  of $\tau: \mathcal U \dashrightarrow \mathbb P TM$ is called the
(compactified) total space of varieties of minimal rational tangents.

\end{definition}

We will use the following results on varieties on minimal rational tangents $\mathcal C_x$ and the distribution defined by the linear span of its affine cone $\widehat{\mathcal C}_x$ for a general point $x \in M$. By a \emph{distribution} on a complex manifold $M$ we mean a subbundle of the tangent bundle $TM$ of $M$.

\begin{proposition}[Proposition II.3.7 of \cite{Kol}] \label{general mrc not intersect  codimension 2 subvariety} Let $M$ be a uniruled projective manifold and $\mathcal K$ be a minimal rational component on $M$. Let $Z \subset M$ be a subvariety of codimension $\geq 2$. Then   a general $C \in \mathcal K$   does not intersect $Z$.
\end{proposition}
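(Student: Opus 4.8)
The plan is to reduce the statement to a dimension count on the universal family $\rho\colon\mathcal U\to\mathcal K$, $\mu\colon\mathcal U\to M$, and to show that the curves meeting $Z$ form a proper closed subvariety of $\mathcal K$; the general $C\in\mathcal K$ then lies in its complement. Since $\mu$ is a morphism to the projective variety $M$ and $Z\subset M$ is closed, the preimage $\mu^{-1}(Z)$ is closed in $\mathcal U$, and since $\rho$ is proper its image $\mathcal K_Z:=\rho(\mu^{-1}(Z))$ is closed in $\mathcal K$. As $\mathcal K_Z$ is exactly the locus of curves in $\mathcal K$ meeting $Z$, and $\mathcal K$ is irreducible, it suffices to prove $\dim\mathcal K_Z<\dim\mathcal K$.

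For the estimate, write $n=\dim M$ and $p=\dim\mathcal C_x$ for general $x$. Because the curves in $\mathcal K$ sweep out a Zariski open subset of $M$, the map $\mu$ is dominant and $\dim\mathcal U=\dim\mathcal K+1$; comparing with the generic fibre of $\mu$ (on which the tangent map is generically finite onto $\mathcal C_x$) gives the familiar relation $\dim\mathcal K=n-1+p$. The key input is a uniform bound on the fibres of $\mu$: for \emph{every} point $z\in M$,
\begin{equation}\label{fiberbound}
\dim\mu^{-1}(z)\ \le\ \dim\mathcal K-(n-1)\ =\ p.
\end{equation}
Granting \eqref{fiberbound}, I apply the fibre-dimension theorem to $\mu\colon\mu^{-1}(Z)\to Z$: the base has dimension $\dim Z\le n-2$ and every fibre has dimension at most $p$, so
\[
\dim\mu^{-1}(Z)\ \le\ \dim Z+p\ \le\ (n-2)+p\ =\ \dim\mathcal K-1.
\]
Hence $\dim\mathcal K_Z\le\dim\mu^{-1}(Z)\le\dim\mathcal K-1<\dim\mathcal K$, as desired; since $\mathcal K_Z$ is a proper closed subset of the irreducible $\mathcal K$, the general $C\in\mathcal K$ avoids $Z$.

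The main obstacle is the uniform fibre bound \eqref{fiberbound}, i.e. controlling the dimension of the family of curves through a fixed, possibly special, point $z\in Z$. For a general $z$ the equality $\dim\mu^{-1}(z)=p$ is immediate from $\dim\mathcal K=n-1+p$, but a priori the fibre dimension could jump along the subvariety $Z$, and it is precisely such a jump that the argument must rule out. This is where the minimality of $\mathcal K$ enters: if some $z$ had $\dim\mu^{-1}(z)\ge p+1$, there would be a family of minimal rational curves through $z$ of dimension strictly larger than the expected dimension $p$, and a bend-and-break (breaking) argument would degenerate a member into a connected chain of rational curves through $z$ whose components have strictly smaller degree, which is incompatible with $\mathcal K$ being a component of minimal degree. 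Establishing \eqref{fiberbound} rigorously is the heart of the matter; once it is in hand, the dimension count above together with the closedness of $\mathcal K_Z$ completes the proof.
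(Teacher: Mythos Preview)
The paper does not give its own proof here; it simply cites Koll\'ar, so the comparison is really against Koll\'ar's argument. Your overall architecture --- show that $\mathcal K_Z=\rho(\mu^{-1}(Z))$ is closed and of strictly smaller dimension than $\mathcal K$ --- is exactly right, and the reduction to a fibre estimate is the standard move.

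The gap is in the justification of the uniform bound \eqref{fiberbound}. Two issues. First, the bend-and-break heuristic you invoke does not directly bound $\dim\mu^{-1}(z)$ by $p$ from a single marked point: one-pointed bend-and-break produces a degeneration only when you can run a \emph{proper} one-parameter family through $z$, and nothing in ``$\dim\mathcal K_z\ge p+1$'' alone supplies that properness. Second, even if a curve in the closure of $\mathcal K$ breaks, the components of smaller degree need not belong to any \emph{dominating} family, so there is no contradiction with the minimality hypothesis as defined in the paper (minimal degree among dominating components, not among all rational curves). Thus the sentence ``incompatible with $\mathcal K$ being a component of minimal degree'' is where the argument actually fails.

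The standard fix --- and this is essentially Koll\'ar's route --- avoids the uniform fibre bound entirely. Since $\mathcal K$ is dominating on a smooth variety in characteristic $0$, a general $[f]\in\mathcal K$ is free, i.e.\ $f^*TM$ is globally generated; hence $\mu$ is a submersion at every point of $\rho^{-1}([f])$. It follows that on the open dense locus $\mathcal K^{\mathrm{free}}\subset\mathcal K$ of free curves, $\mu^{-1}(Z)\cap\rho^{-1}(\mathcal K^{\mathrm{free}})$ has codimension $\ge 2$ in $\rho^{-1}(\mathcal K^{\mathrm{free}})$, so its $\rho$-image has codimension $\ge 1$ in $\mathcal K^{\mathrm{free}}$. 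Therefore $\mathcal K_Z$ is a proper closed subset of $\mathcal K$, and the general curve avoids $Z$. This gives the conclusion without ever needing to control $\dim\mu^{-1}(z)$ at special points $z$. Your dimension count then becomes: $\mu$ smooth over the free locus $\Rightarrow$ $\dim\mu^{-1}(Z)\le\dim\mathcal U-2=\dim\mathcal K-1$ there, which is exactly your inequality, obtained without \eqref{fiberbound}.
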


\begin{proposition} [Proposition 13 of \cite{HM98}, Proposition 1.2.2 of \cite{HM99}] \label{non-integrability}   Let $M$ be a uniruled projective manifold of Picard number one and $\mathcal K$ be a minimal rational component on $M$.
Assume that the variety $\mathcal C_x$ of minimal rational tangents at a general point $x \in M$ is linearly degenerate, that is, the affine cone $\widehat{\mathcal C}_x$ does not span the whole tangent space $T_xM$. Let $D$ be a proper meromorphic distribution on $M$ which contains the linear span of $\widehat{\mathcal C}_x$ at a general point. Then $D$ cannot be integrable.
\end{proposition}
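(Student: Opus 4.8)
The plan is to argue by contradiction: assuming $D$ is integrable, I will show that a general member of $\mathcal K$ is an integral curve of $D$, that integrability then traps such curves inside the leaves of $D$, and finally that the Picard number one hypothesis forces a single leaf to fill up $M$, contradicting the properness of $D$. Since $D$ is only meromorphic, I first fix the Zariski open subset $M^\circ \subseteq M$ on which $D$ is a regular subbundle of $TM$ of rank $r < \dim M$, and work throughout over general points. Note that the linear degeneracy of $\mathcal C_x$ is exactly what makes the hypothesis consistent, since it guarantees $\span(\widehat{\mathcal C}_x) \neq T_xM$, so that a proper $D$ containing the span can exist at all; the contradiction will be extracted from the properness $r < \dim M$.

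For the tangency step, let $C = f(\mathbb P^1)$ be a general member of $\mathcal K$, so that $f(t)$ is a general point of $M$ for general $t \in \mathbb P^1$ and the tangent direction $[df(T_t\mathbb P^1)]$ lies in $\mathcal C_{f(t)}$. By hypothesis $\widehat{\mathcal C}_{f(t)} \subseteq \span(\widehat{\mathcal C}_{f(t)}) \subseteq D_{f(t)}$, hence $df(T_t\mathbb P^1) \subset D_{f(t)}$ for general $t$. As the assignments $t \mapsto df(T_t\mathbb P^1)$ and $t \mapsto D_{f(t)}$ are continuous on the cofinite set of $t$ with $f(t) \in M^\circ$, the inclusion persists there, so $C$ is an integral curve of $D$ wherever $D$ is regular along $C$.

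The main obstacle is the leaf-saturation and chain-connectedness step, which I would carry out as follows. By the Frobenius theorem, through a general point $x \in M^\circ$ there is a germ of maximal integral submanifold (leaf) $F$ of $D$, of dimension $r < \dim M$. Since every integral curve of an integrable distribution passing through a point of a leaf stays inside that leaf, the previous step shows that every minimal rational curve through a general point of $F$ lies in $F$; consequently the Zariski closure $\overline F$ is stable under passing to members of $\mathcal K$ through its general points. Now I invoke Picard number one: the relation ``joined by a chain of curves in $\mathcal K$'' has, by the theory of rational connectedness, general equivalence classes equal to the fibres of an almost holomorphic map $M \dashrightarrow B$; were $\dim B > 0$, the pullback of an ample class from $B$ would be a nonzero nef class on $M$ meeting every member of $\mathcal K$ trivially, which is impossible because $\operatorname{Pic}(M) \cong \mathbb Z$ forces every nonzero nef class to be ample and hence to have positive degree on the covering family $\mathcal K$. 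Therefore a general point of $M$ is connected to $x$ by a chain of members of $\mathcal K$, and the stability of $\overline F$ then propagates along such chains to give $\overline F = M$. This contradicts $\dim F = r < \dim M$, so $D$ cannot be integrable. The delicate points to handle with care are the compatibility between the analytic leaves of $D$ and their Zariski closures, and the justification that saturation persists across the (finitely many) components of the bad loci encountered along a chain; both are controlled by restricting to general curves and general points, where all the objects above are defined.
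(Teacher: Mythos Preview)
The paper does not supply its own proof of this proposition; it is quoted verbatim from \cite{HM98} and \cite{HM99} as an input. Your argument is the standard one from those references: general members of $\mathcal K$ are tangent to $D$, integrability traps them in leaves, and chain-connectedness by $\mathcal K$-curves on a Picard number one manifold forces a leaf to be Zariski dense, contradicting properness. So your approach is correct and coincides with the cited proofs. The only caveats are the ones you flag yourself: the passage from analytic leaves to algebraic closures and the construction of the almost holomorphic $\mathcal K$-quotient $M \dashrightarrow B$ require the standard Campana/Koll\'ar machinery, and strictly speaking $B$ need not be projective, so the ``pull back an ample class'' step should be phrased via a resolution of indeterminacy or by noting that a nontrivial quotient produces a numerically nontrivial movable class orthogonal to $\mathcal K$.
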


\begin{theorem} [Theorem 1.2 of \cite{HM01}] \label{Cartan-Fubini} Let $M_1$ and $M_2$ be  uniruled projective manifolds of Picard number one and $\mathcal K_1$ and let $\mathcal K_2$ be  minimal rational components on $M_1$ and $M_2$, respectively.
Let $E_1$ and $E_2$ be the bad loci of $\mathcal K_1$ and $\mathcal K_2$ and let $\mathcal C_1 $ and $\mathcal C_2$ be the varieties of minimal rational tangents of $(M_1, \mathcal K_1)$ and $(M_2, \mathcal K_2)$.
Assume that the general fiber $\mathcal C_{1,x}$  is positive dimensional and the Gauss map on
$\mathcal C_{1,x}$ is generically finite.

Let $U_1 \subset M_1 \backslash E_1$ and $U_2 \subset M_2 \backslash E_2$ be connected open subset and $f: U_1\rightarrow  U_2$ be a biholomorphism such that $[df](\mathcal C_1|_{U_1}) = \mathcal C_2|_{U_2}$.
Then there is a unique biholomorphic map $F: M_1 \rightarrow M_2$ such that $F|_{U_1} = f$.

\end{theorem}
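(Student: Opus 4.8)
The plan is to produce $F$ by analytically continuing $f$ along minimal rational curves. The guiding principle is that, since $M_1$ has Picard number one, the locus $M_1 \setminus E_1$ is connected through chains of standard curves in $\mathcal K_1$, and the hypothesis $[df](\mathcal C_1|_{U_1}) = \mathcal C_2|_{U_2}$ should force such curves, together with their VMRT data, to be transported into $M_2$. If this transport can be carried out along every curve and shown to be independent of the chain chosen, the resulting map will be the desired global extension.

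First I would verify that $f$ sends germs of members of $\mathcal K_1$ to germs of members of $\mathcal K_2$. For $x \in U_1$, the fiber $\mathcal C_{1,x}$ consists exactly of the tangent directions at $x$ of the standard curves of $\mathcal K_1$ through $x$; since $[df]$ identifies $\mathcal C_{1,x}$ with $\mathcal C_{2,f(x)}$, each such direction is carried to the tangent direction of a member of $\mathcal K_2$ through $f(x)$. As a standard curve $C_1$ has restricted tangent bundle $\mathcal O(2)\oplus\mathcal O(1)^p\oplus\mathcal O^q$, its deformations are unobstructed and, being an irreducible algebraic curve, it is recovered from its germ at a general point; hence the germ at $f(x)$ of the $f$-image of $C_1$ coincides with the germ of a well-defined curve $C_2 \in \mathcal K_2$.

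The analytic core is to upgrade this germ-level statement to a holomorphic extension of $f$ across a whole minimal rational curve. Fixing a general $C_1$ meeting $U_1$ and compatible parametrizations of $C_1$ and $C_2$, I would propagate the VMRT-identification along $C_1$ and extend $f$ to a neighborhood of $C_1$, the image being a neighborhood of $C_2$. Here the hypothesis that the Gauss map on $\mathcal C_x$ is generically finite is essential: it expresses the nondegeneracy of the second fundamental form of the VMRT, and it forces a local isomorphism of VMRT-structures to be rigid, that is, determined by finitely many jets. This rigidity removes both the ambiguity and the obstruction to continuing $f$ along $C_1$, while the positive-dimensionality of $\mathcal C_x$ guarantees that the structure being matched is genuinely constraining.

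Finally I would globalize. By Proposition \ref{general mrc not intersect  codimension 2 subvariety} a general curve of $\mathcal K_1$ avoids the codimension $\geq 2$ bad locus $E_1$ and meets $U_1$, so iterating the previous step along a connecting chain extends $f$ over all of $M_1 \setminus E_1$; because $M_2$ is projective and $E_1$ has codimension $\geq 2$, the map then extends holomorphically across $E_1$ by normality. Running the same construction for $f^{-1}$ yields a two-sided inverse, so $F \colon M_1 \to M_2$ is biholomorphic, and it is unique since it restricts to $f$ on the open set $U_1$. I expect the main obstacle to be the single-valuedness of the continuation: proving that extending $f$ along two different chains of minimal rational curves produces the same germ, and thereby ruling out monodromy. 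This is precisely where the irreducibility of the family of minimal rational curves through a general point and the rigidity furnished by the generically finite Gauss map must be combined.
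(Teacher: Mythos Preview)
The paper does not prove this theorem; it is quoted verbatim as Theorem~1.2 of \cite{HM01} and used as a black box in the final paragraph of the proof of Theorem~\ref{main results}. There is therefore no in-paper proof to compare your proposal against.

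As for the sketch itself: you have correctly identified the architecture of the Hwang--Mok argument---sending germs of standard curves to germs of standard curves, analytic continuation along chains of minimal rational curves, and extension across the codimension~$\geq 2$ bad locus---and you rightly isolate single-valuedness of the continuation as the crux. But several steps, as written, are not yet arguments. The claim that a standard curve ``is recovered from its germ at a general point'' needs the birationality of the tangent map $\tau_x$, itself a nontrivial theorem. Your mechanism for extending $f$ along an entire curve $C_1$ (``propagate the VMRT-identification along $C_1$'') is where the real content lies: in \cite{HM01} this uses the differential system cut out by the VMRT, and the generically finite Gauss map enters to guarantee that \emph{tangent directions to} $\mathcal C_x$ are preserved along $C_1$, which is what forces nearby minimal rational curves to be sent to minimal rational curves---not merely to pin down jets. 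Finally, the passage across $E_1$ is not ``by normality'': one first obtains a meromorphic map on all of $M_1$ and then uses Picard number one on both sides to rule out positive-dimensional fibers or exceptional divisors before concluding biholomorphicity. Your outline is a reasonable roadmap, but each of these three points is a genuine gap that the cited reference fills with substantial work.
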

For the definition of the Gauss map of a projective variety, see Definition \ref{def:fundamental forms}. For example, if $\mathcal C_{x}$ is smooth and not linear, then the Gauss map on $\mathcal C_{x} \subset \mathbb P(T_x M)$ is generically finite. See \cite{HM01} for more details.

\subsection{Fundamental forms}
Fundamental forms are basic invariants of projective varieties. We investigate behaviors of relative second and third fundamental forms of the varieties of minimal rational tangents along the liftings of standard minimal rational curves.
\begin{definition} \label{def:fundamental forms}
Let $U$ be a vector space and $Z \subset \mathbb P(U)$ be a subvariety of dimension $n$. For a point $z$ in the smooth locus $Z^0$ of $Z$, denote by $\widehat T_zZ$ the affine tangent space of $Z$ at $z$. Then the intrinsic tangent space  at $z$ is $T_zZ=\widehat z ^* \otimes  (\widehat T_zZ/\widehat z)$ and the normal space is $N_z = T_z(\mathbb P(U))/T_zZ = \widehat z^* \otimes U/\widehat T_zZ$, where $\widehat z$ is the one-dimensional subspace of $U$ corresponding to the point $z$. The {\it Gauss} map $\gamma: Z^0 \rightarrow \Gr(n+1, U)$ is defined by sending $z \in Z^0$ to the affine tangent space $\widehat T_zZ$ of $Z$ at $z$.

The differential $d _z\gamma:\widehat{T}_zZ \rightarrow (\widehat{T}_zZ)^* \otimes (U/\widehat{T}_zZ)$ is symmetric in the sense that $d_z\gamma(v)(w) = d_z\gamma(w)(v)$ for any $v,w \in \widehat{T}_zZ$ and thus defines a linear map $II_z:S^2T_zZ \rightarrow N_z$, called the {\it second fundamental form} of $Z$ at $z$. The image of $II_z$ is called the {\it second normal space} $N^{(2)}_z$ of $Z$ at $z$. The {\it second osculating affine tangent space} $T^{(2)}_zZ$ is defined by the subspace of $U$ whose quotient space by $\widehat T_zZ$ is   $\widehat z \otimes  Im \,II_z  \subset U/\widehat T_zZ$.

Let $Z^{00}$ denote the points in $Z$ where the rank of the second fundamental form does not drop.
  The second Gauss map $\gamma^{(2)}: Z^{00} \rightarrow \Gr(n^{(2)}+1, U)$ is defined by sending $z \in Z^{00}$ to the second osculating affine tangent space $\widehat T^{(2)}_zZ$ of $Z$ at $z$.  The differential $d \gamma^{(2)}$ defines a linear map $III_z:S^3T_zZ \rightarrow \widehat x^* \otimes U/T^{(2)}_zZ = N_z/N^{(2)}_z$, called the {\it third fundamental form}  of $Z$ at $z$. For any $k \geq 4$ the $k$-th fundamental form is defined in a similar way.
\end{definition}

\begin{definition}

Let $\mathcal U$ be a   vector bundle on a manifold $M$ and $\pi: \mathbb P \mathcal U \rightarrow M$ be the projection map from its projectivization. Let $\mathcal Z \subset \mathbb P(\mathcal U)$ be a subvariety  and let $\varpi: \mathcal Z \rightarrow   M$ be the restriction of $\pi$ to $\mathcal Z$.

Let $\mathcal Z^0\subset \mathcal Z$ be an open subset  such that $\varpi|_{\mathcal Z^0}: \mathcal Z^0 \rightarrow M^0:=\varpi(\mathcal Z^0)$ is a submersion and for each $t \in M^0$, $\mathcal Z_t:=   \varpi^{-1}(t) $ is immersed in $\mathbb P \mathcal U_t:=\pi^{-1}(t)$ at any point in $ \mathcal Z^0_t:= \varpi^{-1}(t) \cap \mathcal Z^0$.
Define a vector bundle $T^{\varpi}$ on $\mathcal Z^0$ by $T^{\varpi}:=\cup_{z\in \mathcal Z^0} T_{z} \mathcal Z_{\varpi(z)}$, which is called the \emph{relative tangent bundle} of $\mathcal Z \subset \mathbb P(\mathcal U)$. In a similar way we can define the relative affine tangent bundle $\widehat {T}^{\varpi}$,  relative normal bundles $\mathcal N $ and  relative second  fundamental form  $II^{\varpi}$. Then
$II^{\varpi}$ is a section of $\Hom(\Sym^2 T^{\varpi}, \mathcal N)$.

Assume that the rank of $II^{\varpi}$ is constant. Then the image of $II^{\varpi}$ defines a subbundle $\mathcal N^{(2)}$ of $\mathcal N$, called the \emph{second  normal bundle}. Similarly, we can define the relative third fundamental form $III^{\varpi}$ as a section of $\Hom(\Sym^3 T^{\varpi}, \mathcal N/\mathcal N^{(2)})$.

\end{definition}

Let $M$ be a uniruled projective manifold and $ \mathcal C$ be the variety of minimal rational tangents associated to a choice of a minimal rational component $\mathcal K$ on $M$.
For a rational curve $C$ in $\mathcal K$ represented by $f: \mathbb P^1 \rightarrow M$, we will denote by  $f^{\sharp}$   the map $\mathbb P^1 \rightarrow \mathbb P(TM)$ mapping $z \in \mathbb P^1$ to $df(T_z\mathbb P^1)$ and by $C^{\sharp}$ the image $f^{\sharp}(\mathbb P^1)$ of $f^{\sharp}$.

\begin{proposition} [Proposition 2.2 of \cite{Mk08}] \label{parallel transport of II}

Let $M$ be a uniruled projective manifold and let  $ \mathcal C$ be the variety of minimal rational tangents associated to a choice of a minimal rational component $\mathcal K$ on $M$. Denote by $\varpi: \mathcal C \rightarrow M$   the restriction of the projection map $\mathbb P(TM) \rightarrow M$.
Let $C=[f]$ be a standard rational curve on $X$ so that $f^*TM =\mathcal O(2) \oplus \mathcal O(1)^p \oplus \mathcal O^q$ for some $p$ and $q$.
Then
\begin{enumerate}
\item the pull-back $(f^{\sharp})^*\widehat{T}^{\varpi} $ of the relative affine tangent bundle $ \widehat{T}^{\varpi} $ of $\mathcal C \subset \mathbb P(TM)$ is the positive part $ P:=\mathcal O(2) \oplus \mathcal O(1)^p$ of $f^*TM$;
\item the relative 2nd fundamental form $II^{\varpi}$ of $\mathcal C|_{C^{\sharp}}$ is constant and  the pull-back $(f^{\sharp})^*\widehat{T}^{(2), \varpi} $ of the relative  second osculating affine   bundle   $ \widehat{T}^{(2), \varpi} $ of $\mathcal C \subset \mathbb P(TM)$  is a subbundle  $P^{(2)} =\mathcal O(2) \oplus \mathcal O(1)^p \oplus \mathcal O^r$ of $f^*TM$, where $r$ is the dimension of the image of $II^{\varpi}$.

\end{enumerate}

\end{proposition}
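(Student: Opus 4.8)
The plan is to pull back the entire relative jet geometry of $\mathcal C\subset\mathbb P(TM)$ along $f^\sharp$ and read off everything from the decomposition $f^*TM=\mathcal O(2)\oplus\mathcal O(1)^p\oplus\mathcal O^q$. Write $e_0$ for a generator of the summand $\mathcal O(2)=df(T\mathbb P^1)$, so that $f^\sharp(z)=[e_0(z)]\in\mathbb P\big((f^*TM)_z\big)$, and fix generators $e_1,\dots,e_p$ of the $\mathcal O(1)$ summands and $e_{p+1},\dots,e_{p+q}$ of the trivial summands. The bundles $\widehat T^\varpi$, $T^\varpi$, $\mathcal N$, $\widehat T^{(2),\varpi}$ all live on $\mathcal C^0$, and the point of the proposition is to identify their pullbacks by $f^\sharp$ as explicit subbundles of $f^*TM$; the genericity needed for $C^\sharp$ to meet $\mathcal C^0$ and for the tangent map to be an immersion along $C^\sharp$ follows from the standardness of $C$.

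For part (1) I would compute the affine tangent space $\widehat T_{f^\sharp(z_0)}\mathcal C_{f(z_0)}$ by deformation theory. Fixing $x=f(z_0)$, the infinitesimal deformations of the minimal curve through $x$ are the sections $\sigma\in H^0(\mathbb P^1,f^*TM)$ with $\sigma(z_0)\in\langle e_0(z_0)\rangle$, taken modulo reparametrization, and the differential of the tangent map $\tau_x$ sends such a $\sigma$ to the variation of the tangent direction, namely the first-order jet $\sigma'(z_0)\bmod\langle e_0(z_0)\rangle$. A section in the $\mathcal O(2)$ summand contributes only a reparametrization and hence maps to zero; a section $\sigma=\sum_{i=1}^p a_i(z-z_0)e_i$ in the $\mathcal O(1)$ part (these are exactly the $\mathcal O(1)$-sections meeting the constraint at $z_0$) contributes $\sum_i a_i e_i(z_0)$; and the constant sections spanning the $\mathcal O^q$ part are excluded by the constraint $\sigma(z_0)\in\langle e_0(z_0)\rangle$. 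Thus the image spans $\langle e_1(z_0),\dots,e_p(z_0)\rangle\bmod\langle e_0(z_0)\rangle$, so $\widehat T_{f^\sharp(z_0)}\mathcal C_{f(z_0)}=\langle e_0(z_0),\dots,e_p(z_0)\rangle=P_{z_0}$, giving $(f^\sharp)^*\widehat T^\varpi=P=\mathcal O(2)\oplus\mathcal O(1)^p$.

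For part (2) the constancy of $II^\varpi$ along $C^\sharp$ is a degree count. From part (1) we have $\widehat z=\mathcal O(2)$ and $\widehat T^\varpi|_{C^\sharp}=P$, hence $T^\varpi|_{C^\sharp}=\widehat z^*\otimes(P/\widehat z)=\mathcal O(-2)\otimes\mathcal O(1)^p=\mathcal O(-1)^p$ and $\mathcal N|_{C^\sharp}=\widehat z^*\otimes(f^*TM/P)=\mathcal O(-2)\otimes\mathcal O^q=\mathcal O(-2)^q$. Therefore $II^\varpi$ is a section over $C^\sharp\cong\mathbb P^1$ of $\Hom(\Sym^2 T^\varpi,\mathcal N)=\Hom(\Sym^2\mathcal O(-1)^p,\mathcal O(-2)^q)$, a trivial bundle of degree $0$; a global section of a trivial bundle on $\mathbb P^1$ is constant, which proves the constancy assertion and shows that $\operatorname{Im}II^\varpi$ is a constant subbundle of $\mathcal N|_{C^\sharp}\cong\mathcal O(-2)^q$, say of rank $r$. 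Consequently $\widehat z\otimes\operatorname{Im}II^\varpi=\mathcal O(2)\otimes\mathcal O(-2)^r=\mathcal O^r$ sits inside $f^*TM/P=\mathcal O^q$ as a subbundle, and lifting back to $f^*TM$ yields $(f^\sharp)^*\widehat T^{(2),\varpi}=P^{(2)}=\mathcal O(2)\oplus\mathcal O(1)^p\oplus\mathcal O^r$.

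The main obstacle is the deformation-theoretic identification in part (1): one must justify rigorously that the differential of $\tau_x$ at $[f,z_0]$ is the jet map $\sigma\mapsto\sigma'(z_0)\bmod\langle e_0(z_0)\rangle$ on the space of deformations fixing $x$, correctly accounting for reparametrizations and for the marked point, so that the affine tangent space is \emph{exactly} $P_{z_0}$ rather than merely contained in it. Once this is in place, the rest of the statement is the twist bookkeeping recorded above, which is routine.
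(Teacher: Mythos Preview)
The paper does not supply its own proof of this proposition: it is quoted verbatim from Mok's paper \cite{Mk08} and used as a black box. So there is no in-paper argument to compare against.

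Your argument is the standard one and is correct. The deformation-theoretic identification in part~(1) that you flag as the main obstacle is exactly the computation carried out in \cite{HM99} and \cite{Mk08}: for a standard curve, the tangent space to $\mathcal U_x$ at $[f,z_0]$ is $H^0(\mathbb P^1, f^*TM(-z_0))$ modulo the image of $H^0(\mathbb P^1, T\mathbb P^1(-z_0))$ under $df$, and the differential of $\tau_x$ is the first-jet map you describe; the standardness hypothesis guarantees this map is surjective onto $P_{z_0}/\langle e_0(z_0)\rangle$, giving equality rather than mere containment. Your degree count in part~(2) is the usual one and is fine as written: $\Sym^2\mathcal O(-1)^p$ has all summands of degree $-2$, so $\Hom(\Sym^2\mathcal O(-1)^p,\mathcal O(-2)^q)$ is trivial, forcing $II^\varpi$ to be constant and its image to be a trivial subbundle after untwisting.
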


Proposition \ref{parallel transport of II} implies that the second fundamental forms of the varieties of minimal rational tangents are constant along the lifting of standard minimal rational curves. However, the third fundamental forms can vary. We will show the consistency of the third fundamental forms under some assumptions  (Proposition \ref{parallel transport of III}).

Let $D$ be the distribution on $M$ defined by the linear span of the affine cone $\widehat{\mathcal C}_x$.  Then outside a subvariety $Sing(D)$ of codimension $\geq 2$, $D$ is a subbundle of $TM$ and we may think of relative fundamental forms of $\mathcal C \subset \mathbb P(D)$.
   The kernel of the Frobenius bracket $[\,,\,]: D \wedge  D \rightarrow TM/D$
   is related to the projective geometry of the variety of minimal rational tangents as follows.
\begin{proposition} [Proof of Proposition 1.2.1 and  Proposition 1.3.1 and Proposition 1.3.2 of \cite{HM99}] \label{kernel of frobenius braket}
 Let $M$ be a uniruled projective manifold and $\mathcal K$ be a minimal rational component on $M$. Assume that at a general point $x \in M$ the variety of minimal rational tangents $\mathcal C_x$ is linearly degenerate. Denote by $D_x \subset T_xM$ the linear span of the affine cone $\widehat{\mathcal C}_x$ of $\mathcal C_x$ and by  $[ \,,\,]: \wedge^2 D_x \rightarrow T_xM/D_x$ the Frobenius Lie bracket. Then
 for a generic $\alpha \in \widehat{\mathcal C}_x$ and $\xi , \eta \in T_{\alpha}\widehat{\mathcal C}_x$,
 \begin{eqnarray*}  \alpha \wedge \xi, \,\, \alpha \wedge II(\xi, \eta),\,\,\xi \wedge \eta
  \end{eqnarray*} are contained in the kernel of $[ \,,\,]: \wedge^2 D_x \rightarrow T_xM/D_x$.
 \end{proposition}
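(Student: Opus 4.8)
The plan is to derive the three membership assertions from the non-integrability of the distribution $D$ combined with the constancy of the second fundamental form along liftings of standard minimal rational curves. The key observation is that for a generic $\alpha \in \widehat{\mathcal C}_x$, the tangent direction $\alpha$ itself corresponds to a standard minimal rational curve $C$ through $x$ with $df(T_0\mathbb P^1) = \widehat\alpha$; by Proposition \ref{parallel transport of II}, the relative affine tangent bundle $\widehat T^{\varpi}$ and the second osculating bundle $\widehat T^{(2),\varpi}$ pull back along $f^{\sharp}$ to the positive parts $P$ and $P^{(2)}$ of $f^*TM$. Since $C$ is a minimal rational curve, its deformations sweeping out $M$ force the relevant brackets of sections of $D$ tangent to $C$ to land back inside $D$ modulo contributions controlled by the splitting type $f^*TM = \mathcal O(2)\oplus\mathcal O(1)^p\oplus\mathcal O^q$. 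Concretely, first I would set up local frames: extend $\alpha$, $\xi$, $\eta$ to local sections of $D$ defined near $x$ so that they arise as values of sections of the pulled-back bundles $P$ and $P^{(2)}$ along the curve.

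Second, I would treat the three cases in turn. For $\alpha \wedge \xi$: since both $\alpha$ and $\xi \in T_\alpha\widehat{\mathcal C}_x$ lie in the affine tangent space $\widehat T_\alpha \widehat{\mathcal C}_x$, and since $\widehat{\mathcal C}_x$ is swept by the $\mathcal O(2)\oplus\mathcal O(1)^p$ part, the bracket $[\alpha,\xi]$ is the leading coefficient of a deformation of the curve $C$ inside the variety of minimal rational tangents; the integrability along the curve direction (the $\mathcal O(2)$ factor pairs with the fiber of $\mathcal C$) shows this bracket remains in $D$, i.e.\ $\alpha \wedge \xi \in \Ker[\,,\,]$. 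For $\xi \wedge \eta$ with $\xi,\eta \in T_\alpha\widehat{\mathcal C}_x$: this is the statement that the fibers $\widehat{\mathcal C}_x$, being tangent to $D$ and arising as osculating spaces of deformations preserving minimality, bracket-close within $D$; I would obtain this from the fact that $\mathcal C$ itself is an integrable (indeed, fibered over $\mathcal K$) family, so derivatives of tangent vectors to $\mathcal C_x$ in directions tangent to $\mathcal C_x$ stay in $D$. For $\alpha \wedge II(\xi,\eta)$: here I would use Proposition \ref{parallel transport of II}(2), that the second osculating affine bundle pulls back to $P^{(2)} = \mathcal O(2)\oplus\mathcal O(1)^p\oplus\mathcal O^r$, whose sections still map into $D$ under the differential; the bracket of $\alpha$ (the $\mathcal O(2)$ direction) with a second-normal direction $II(\xi,\eta)$ lies in $\widehat T^{(2),\varpi}$ and hence in $D$.

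The unifying principle, following the proof of Proposition 1.2.1 and Propositions 1.3.1--1.3.2 of \cite{HM99}, is that all three bivectors are represented as leading terms of the Lie bracket computed along the one-parameter family of deformations of the standard curve $C$, and the splitting type guarantees these leading terms are sections of the positive part, hence tangent to $D$. I would make this precise by the standard computation: writing a deformation $f_s$ of $f$ with $\partial_s f_s|_{s=0}$ a section of $f^*TM$ of the appropriate positive degree, the obstruction to the bracket lying in $D$ is measured by the $\mathcal O^q$ (degree-zero) part, which does not contribute for the three specified combinations because each factor comes from $P$ or $P^{(2)}$.

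The main obstacle I anticipate is the $\alpha \wedge II(\xi,\eta)$ case: unlike the other two, the direction $II(\xi,\eta)$ is a second-order (normal) datum, not merely a tangent vector to $\widehat{\mathcal C}_x$, so I must carefully track that the second fundamental form, as the derivative of the Gauss map, produces a vector whose bracket with $\alpha$ still lands in $D$ rather than escaping into $T_xM/D$. This requires using the constancy of $II^{\varpi}$ along $C^{\sharp}$ from Proposition \ref{parallel transport of II}(2) to commute the bracket operation past the second fundamental form — essentially a Bianchi-type identity for the osculating sequence — and verifying that the resulting curvature term vanishes because the $\mathcal O^r$ summand of $P^{(2)}$ lies in the prolongation of $D$ along the curve.
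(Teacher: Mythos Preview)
The paper does not prove this proposition but cites it from the proofs in \cite{HM99}. Your sketch has the right opening move for $\alpha\wedge\xi$---the surface swept by a one-parameter deformation of $C$ in the direction $\xi$ is everywhere tangent to $D$, so the bracket lands in $D$---but the mechanisms you propose for $\xi\wedge\eta$ and $\alpha\wedge II(\xi,\eta)$ do not work. For $\xi\wedge\eta$ you invoke ``derivatives of tangent vectors to $\mathcal C_x$ in directions tangent to $\mathcal C_x$,'' but the Levi tensor is computed by extending $\xi,\eta$ to local sections of $D$ on $M$, and the internal geometry of the single fiber $\mathcal C_x$ says nothing about that extension. For $\alpha\wedge II(\xi,\eta)$ you argue by degrees along $C$, using that $II(\xi,\eta)$ sits in the $\mathcal O^r$ part of $P^{(2)}$; but the target $f^*(TM/D)$ is at this stage only known to be a locally free quotient of $\mathcal O^q$, which can a priori carry summands of degree $\geq 2$, so a bundle map from $\mathcal O(2)\otimes\mathcal O^r\cong\mathcal O(2)^r$ need not vanish. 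The decomposition $f^*(TM/D)=\mathcal O(1)^s\oplus\mathcal O^t$ that would rescue the degree count is the \emph{conclusion} of Proposition~\ref{parallel transport of III}, proved afterwards and under additional hypotheses, so it is not available here.

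The step you are missing is pointwise and algebraic. Once $[\alpha,\xi]\equiv 0\bmod D_x$ holds for every $\alpha\in\widehat{\mathcal C}_x$ and every $\xi\in T_\alpha\widehat{\mathcal C}_x$, the Levi tensor at $x$ is a fixed antisymmetric bilinear form on $D_x$. Choose a path $\alpha(s)\subset\widehat{\mathcal C}_x$ with $\alpha(0)=\alpha$, $\alpha'(0)=\xi$, and a field $\zeta(s)\in T_{\alpha(s)}\widehat{\mathcal C}_x$ with $\zeta(0)=\eta$; differentiating the identity $[\alpha(s),\zeta(s)]\equiv 0$ at $s=0$ and using $\zeta'(0)\equiv II(\xi,\eta)\bmod T_\alpha\widehat{\mathcal C}_x$ (the latter already annihilated by $[\alpha,\cdot]$) gives $[\xi,\eta]+[\alpha,II(\xi,\eta)]\equiv 0$. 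Swapping the roles of $\xi$ and $\eta$ and combining the symmetry of $II$ with the antisymmetry of the bracket forces each term to vanish separately. This differentiation-inside-the-fiber argument, rather than a splitting-type count along $C$, is what underlies the proofs in \cite{HM99}.
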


Proposition \ref{kernel of frobenius braket} implies that the second osculation affine tangent space $ \widehat  T^{(2)} _{\alpha}\mathcal C_x$ is contained in the kernel of $[\alpha, \,]: D_x \rightarrow T_xM/D_x$, and thus the rank of $[\alpha, \,]: D_x \rightarrow T_xM/D_x$ is less than or equal to the codimension of $ \widehat  T^{(2)} _{\alpha}\mathcal C_x$ in $D_x$.

\begin{proposition} [cf. Proposition 3.1 of \cite{Mk08}, Proposition 5.1 of \cite{HH}]  \label{parallel transport of III}
Let $M$ be a uniruled projective manifold of Picard number one and let  $ \mathcal C$ be the variety of minimal rational tangents associated to a choice of a minimal rational component $\mathcal K$ on $M$.
 Let $D$ be the distribution on $M$ defined by the linear span of the affine cone $\widehat{\mathcal C}_x$.
By Proposition \ref{general mrc not intersect  codimension 2 subvariety} we may take a standard minimal rational curve $C=f(\mathbb P^1)$ with $C \cap Sing(D) =\emptyset$.
 Let $s$ be the codimension of the second osculating affine tangent space of $\mathcal C_x$ in $\mathbb P(D_x)$.
 Assume that
  \begin{enumerate}
   \item the third fundamental form of $\mathcal C_x \subset \mathbb P(D_x)$ is surjective for generic $x \in C$;

   \item the rank of $[\mathcal O(2)_x, \,]: D_x \rightarrow T_xM/D_x$ is $s$ for generic $x \in C$.
\end{enumerate}

Then the relative 3rd fundamental form  $III^{\varpi}$ is constant along $C^{\sharp}$
and we have  $f^*D =\mathcal O(2) \oplus \mathcal O(1)^p \oplus \mathcal O^r \oplus \mathcal O(-1)^s$ and $f^*TM/D =\mathcal O(1)^s \oplus \mathcal O^t$.

\end{proposition}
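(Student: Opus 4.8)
The plan is to pull the osculating filtration of the VMRT back along the tangent lift $f^{\sharp}\colon \mathbb P^1 \to \mathbb P(TM)$ and read off every splitting type from the fundamental forms together with the two hypotheses. Recall from Proposition \ref{parallel transport of II} that $(f^{\sharp})^*\widehat T^{\varpi} = P = \mathcal O(2)\oplus\mathcal O(1)^p$ and $(f^{\sharp})^*\widehat T^{(2),\varpi} = P^{(2)} = \mathcal O(2)\oplus\mathcal O(1)^p\oplus\mathcal O^r$. The tautological line $\widehat\alpha$ pulls back to the tangent subbundle $df(T\mathbb P^1)=\mathcal O(2)$, so the intrinsic relative tangent bundle $T^{\varpi}=\widehat\alpha^*\otimes(\widehat T^{\varpi}/\widehat\alpha)$ pulls back to $\mathcal O(-1)^p$ and the second normal bundle $\mathcal N^{(2)}$ to $\mathcal O(-2)^r$. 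Since the third fundamental form is surjective (hypothesis (1)), the third osculating bundle already fills $D$, so $(f^{\sharp})^*\widehat T^{(3),\varpi}=f^*D$; writing $f^*D/P^{(2)}=\bigoplus_{i=1}^s\mathcal O(a_i)$, which has rank $s$, the third normal bundle pulls back to $\mathcal N^{(3)}=\mathcal O(-2)\otimes(f^*D/P^{(2)})$.

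First I would extract a lower bound on the $a_i$. The relative third fundamental form is a bundle map $III^{\varpi}\colon S^3 T^{\varpi}\to\mathcal N^{(3)}$, which along $C^{\sharp}$ reads $\mathcal O(-3)^{\binom{p+2}{3}}\to\bigoplus_i\mathcal O(a_i-2)$; its generic surjectivity forces a nonzero map $\mathcal O(-3)\to\mathcal O(a_i-2)$ into each factor, i.e.\ $a_i\ge -1$ for all $i$. Next, since $f^*TM=\mathcal O(2)\oplus\mathcal O(1)^p\oplus\mathcal O^q$ is globally generated, its quotient $f^*(TM/D)=\bigoplus_{j}\mathcal O(c_j)$ is globally generated as well, whence $c_j\ge 0$; comparing degrees gives $\sum_j c_j=-\sum_i a_i$.

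The crux is the matching upper bound, which I would obtain from hypothesis (2) by a determinant argument. The Frobenius bracket $\wedge^2 D\to TM/D$ is tensorial, and pairing it against the tangent direction $\mathcal O(2)\subset f^*D$ gives a bundle map $f^*D\to\mathcal O(-2)\otimes f^*(TM/D)$ that, by Proposition \ref{kernel of frobenius braket}, kills $P^{(2)}$ and hence descends to $\phi\colon f^*D/P^{(2)}\to\mathcal O(-2)\otimes f^*(TM/D)$. Hypothesis (2) says $\phi$ has rank $s$ at a general point, i.e.\ $\phi$ is generically injective, so $\wedge^s\phi\neq 0$. Comparing $\wedge^s(f^*D/P^{(2)})=\mathcal O(\sum_i a_i)$ with the top-degree summand of $\wedge^s\bigl(\mathcal O(-2)\otimes f^*(TM/D)\bigr)$ yields $\sum_i a_i\le(\text{sum of the }s\text{ largest }c_j)-2s\le\sum_j c_j-2s=-\sum_i a_i-2s$, hence $\sum_i a_i\le -s$. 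Together with $a_i\ge -1$ this forces $a_i=-1$ for every $i$, giving $f^*D=\mathcal O(2)\oplus\mathcal O(1)^p\oplus\mathcal O^r\oplus\mathcal O(-1)^s$. I expect this squeeze to be the main obstacle, as it is the point where the two hypotheses must be made to cooperate.

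It then remains to pin down $f^*(TM/D)$ and the constancy of $III^{\varpi}$. With $a_i=-1$ the map $\phi\colon\mathcal O(-1)^s\hookrightarrow\bigoplus_j\mathcal O(c_j-2)$ cannot hit any factor with $c_j=0$, since $\Hom(\mathcal O(-1),\mathcal O(-2))=0$; so it embeds into the factors with $c_j\ge 1$, and as these satisfy $\sum_j c_j=s$, injectivity forces exactly $s$ of them to equal $1$ and all others to vanish, i.e.\ $f^*(TM/D)=\mathcal O(1)^s\oplus\mathcal O^t$. Finally, now that $\mathcal N^{(3)}=\mathcal O(-3)^s$ and $S^3 T^{\varpi}=\mathcal O(-3)^{\binom{p+2}{3}}$ along $C^{\sharp}$, the relative third fundamental form is a homomorphism between bundles of equal slope $-3$, hence given by a constant matrix; thus $III^{\varpi}$ is constant along $C^{\sharp}$, exactly as $II^{\varpi}$ was in Proposition \ref{parallel transport of II}.
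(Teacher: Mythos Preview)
Your argument is correct and follows the same overall strategy as the paper: pull back the osculating filtration along $C^\sharp$, use the surjectivity of $III$ for the lower bound $a_i\ge -1$, and use the Frobenius bracket with the $\mathcal O(2)$-direction together with hypothesis (2) to force equality, then read off the splitting types and the constancy of $III^{\varpi}$.

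The one point where you deviate from the paper is the mechanism for the upper bound. The paper first observes that $f^*D/P^{(2)}$ is a subbundle of $f^*TM/P^{(2)}\simeq\mathcal O^{q-r}$, hence each $a_i\le 0$, so that $a_i\in\{0,-1\}$; writing $s'$ for the number of $(-1)$'s, it then notes that $\mathcal O(2)\otimes f^*D$ has only strictly positive summands, so the bracket image lands in the positive part $f^*(TM/D)_+$, which has rank at most $\deg f^*(TM/D)=s'$, whence $s\le s'$ and $s'=s$. Your determinant argument on $\wedge^s\phi$ bypasses the preliminary bound $a_i\le 0$ entirely and extracts $\sum_i a_i\le -s$ directly from generic injectivity and the degree identity $\sum_j c_j=-\sum_i a_i$; this is a clean variant that buys a small simplification. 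Both routes then finish identically.
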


\begin{proof}   If $s =0$, then $D$ is integrable and thus $D=TM$  (Proposition \ref{non-integrability}).
Assume that $s >0$.
Since $\widehat {\mathcal C}_x $ is contained in $D_x$, $P^{(2)}$ is a subbundle of $f^*D$. Furthermore, $f^*D/P^{(2)}$ is a subbundle of $f^*TM/P^{(2)}\simeq \mathcal O^{q-r}$ and thus is $\mathcal O(b_1)\oplus \dots \mathcal O(b_s) ,$  where $0 \geq b_1 \geq\dots \geq b_s$. It follows that the short exact sequence $0 \rightarrow P^{(2)}\rightarrow f^*D \rightarrow f^* D/P^{(2)} \rightarrow 0$ is split and
$$f^*D = \mathcal O(2) \oplus \mathcal O(1)^p \oplus  \mathcal O^r \oplus \mathcal O(b_1)\oplus \dots \mathcal O(b_s) ,$$ where $0 \geq b_1 \geq\dots \geq b_s$.

If all $b_j$ are zero, then the Frobenius Lie bracket $[\,,\,]: D \times D \rightarrow TM/D$ is zero %(for,...)
and thus $D$ is integrable, a contradiction. Therefore, $b_s<0$.

The relative 3rd fundamental form $III^{\varpi}$ of $\mathcal C|_{C^{\sharp}}$ is a section of $$\Hom(\Sym^3 T^{\varpi}, Hom(f^*L, f^*D/P^{(2)}))|_{\mathcal C^{\sharp}} \subset \Hom(\Sym^3 T^{\varpi}, N/N^{(2)})|_{\mathcal C^{\sharp}}, $$
  which is isomorphic to $\Hom(\Sym^3\mathcal O(-1)^p,  \mathcal O(b_1-2) \oplus \dots \oplus \mathcal O(b_s -2))$. By the surjectivity of the third fundamental forms $III$, we have $b_s \geq -1$. Thus $b_s=-1$ and $f^*D = \mathcal O(2) \oplus \mathcal O(1)^p \oplus \mathcal O^{r'}  \oplus \mathcal O(-1)^{s'}$ for some $r' \geq r$ and $s' \leq s$.

The Chern number of $f^*(TM/D)$ is $s'$. Since every factor of $f^*(TM/D)$ has nonnegative degree, the rank of the positive part $f^*(TM/D)_+$ of $f^*(TM/D)$ is $\leq s'$.

On the other hand, under the Frobenius bracket $[\,,\,] :D \times D \rightarrow TM/D$, the image $[\mathcal O(2), f^*D ] $ is contained in $f^*(TM/D)_+$ and has dimension $\leq s' \leq s$. By the condition that the rank of $[\mathcal O(2)_x, \,]:D_x \rightarrow T_xM/D_x$ is $s$, we have  $s \leq s'$ and thus we have $s=s'$. Consequently, $r=r'$ and $f^* D= \mathcal O(2) \oplus \mathcal O(1)^p \oplus \mathcal O^r \oplus \mathcal O(-1)^s$ and $f^*(TM/D) = \mathcal O(1)^s \oplus \mathcal O^{q-r-2s}$. Therefore, $III^{\varpi}$ is a section of a trivial vector bundle and thus is constant.
\end{proof}

\section{Geometric structures} \label{sect:geometric structures}

\subsection{$G_0$-structures and ${\bf S}$-structures}
\label{sect:G0 and S structures}

Let $\mathfrak m=\bigoplus_{p<0}\mathfrak g_{p}$    a {\it fundamental} graded Lie algebra, that is, a graded Lie algebra with $[\frak g_p, \frak g_{-1}] = \frak g_{p-1}$ for any $p<0$. %{\color{orange}We call it.}

\begin{definition}\label{distribution type}
Let $D$ be a distribution on a manifold $M$. %,  that is, a subbundle of the tangent bundle $TM$. Set $D^{-1}=D$.
Define $D^p $ for $p<0$ inductively by the following property:
$$\underline{D}^p =[\underline{D}^{p+1}, \underline{D}^{-1}]+ \underline{D}^{p+1},$$
where $\underline{D}^r$ is the sheaf of local sections of the vector bundle $D^r$. Then $\Sym_x(D):=\sum_{p<0} D^p(x)/D^{p+1}(x)$ is endowed with a structure of graded Lie algebra, called the {\it symbol algebra} of $D$.

 A distribution $D$ on a manifold $M$ is called of type $\mathfrak m$ if for each $x \in M$ the symbol algebra $\Symb_x(D)$ is isomorphic to $\mathfrak m$ as a graded Lie algebra. In this case, the pair $(M,D)$ is called a \emph{filtered manifold of type} $\mathfrak m$.

For each $x \in M$, let $\mathscr R_x $ be the set of all isomorphisms $r \colon \mathfrak m\rightarrow \Symb_x(D)$ of graded Lie algebras. Then $\mathscr R:=\cup_{x\in M}\mathscr R_x $ is a principal $G_0(\mathfrak m)$-bundle on $M$, where  $G_{0}(\mathfrak m)$ is the automorphism group  of   the graded Lie algebra $\mathfrak m$.
We call   $\mathscr R$  the \emph{frame bundle} of $(M,D)$.

\end{definition}

\begin{definition}  \label{G_0 structure}  Let  $(M,D)$ be a filtered manifold of type $\frak m$.
Given a closed subgroup $G_0\subset G_{0}(\mathfrak m)$, a \emph{$G_0$-structure} on $(M,D)$ is a $G_0$-subbundle of the frame bundle $\mathscr R$ of $(M,D)$. Two $G_0$-structures $\mathscr P_1$ on $(M_1,D_1)$ and $\mathscr P_2$ on $(M_2,D_2)$ are \emph{equivalent} if there is a biholomorphism $\varphi: M_1 \rightarrow M_2$ such that $d\varphi: TM_1 \rightarrow TM_2$ induces an isomorphism from $\mathscr P_1$ onto $\mathscr P_2$.
The local equivalence of two $G_0$-structures is defined similarly for open sets $U_1 \subset M_1$ and $U_2 \subset M_2$.
 \end{definition}

\begin{definition}\label{cone structure}  Let   $(M,D)$ be a filtered manifold of type $\frak m$.  Let ${\bf S}$ be a nondegenerate subvariety of $\mathbb P {\mathfrak g_{-1}} $.
A fiber subbundle $\mathcal S \subset \mathbb P D$ is called an  {\it ${{\bf S}}$-structure on $(M,D)$} if for each $x \in M$, the fiber $\mathcal S_x \subset \mathbb P D_x$ is isomorphic to ${\bf S} \subset \mathbb P {\mathfrak g_{-1}}$ under a graded Lie algebra isomorphism
$  \mathfrak m \rightarrow   \Symb_x(D) $.

Two ${\bf S}$-structures $\mathcal S_1$ on $(M_1,D_1)$ and $\mathcal S_2$ on $(M_2,D_2)$ are said to be \emph{equivalent} if there exists a biholomorphism $\phi \colon M_1 \to M_2$ such that $d\phi \colon \mathbb P TM_1 \to \mathbb P TM_2$ sends $\mathcal S_1 \subset \mathbb P TM_1$ to $\mathcal S_2 \subset \mathbb P TM_2$. The \emph{local equivalence} of two ${\bf S}$-structures is defined similarly for   open subsets $U_1\subset M_1$ and $U_2 \subset M_2$.
\end{definition}

An ${\bf S}$-structure can be interpreted as a $G_0$-structure, and the local equivalence of ${\bf S}$-structures can be checked by using the local equivalence of the corresponding $G_0$-structures under some conditions.

\begin{definition}   Let ${\bf U}$ be a vector space and and let $ {\bf S} $ be a nondegenerate subvariety of $\mathbb P \bf U$. Consider the graded free Lie algebra $F({\bf U})$ generated by $\bf U$. Denote by $I({\bf S})$ the ideal  of $F(\bf U)$ generated by the relation $[v,w]=0$ for $v,w \in {\bf U}$ such that $v \in \widehat{{\bf S}}$ and  $w \in T_v \widehat{{\bf S}}$.  We call the  quotient graded Lie algebra $\mathfrak m({\bf S}, \mathbb P{\bf U}):= F({\bf U})/I({\bf S})$ \emph{the graded Lie algebra determined by} ${\bf S} \subset \mathbb P {\bf U}$.
\end{definition}

\begin{definition}
Let $\mathfrak m=\bigoplus_{p<0}\mathfrak g_{p}$ be a fundamental graded Lie algebra and let ${\bf S}$ be a nondegenerate subvariety of $\mathbb P {\mathfrak g_{-1}}$. If the   graded Lie algebra determined by ${\bf S} \subset \mathbb P  \frak g_{-1} $ is isomorphic to $\mathfrak m$, we say that  $\mathfrak m$ is  \emph{determined by} ${\bf S} \subset \mathbb P \mathfrak g_{-1}$.
\end{definition}

\begin{example} Let $\frak g=\bigoplus_{-\mu \leq i \leq \mu} \frak g_i$ be a simple graded Lie algebra. Let ${\bf S} \subset \mathbb P \frak g_{-1}$ be the projectivization of the cone of  highest weight vectors of the irreducible $\frak g_0$-module $\frak g_{-1}$. Then $\frak m=\bigoplus_{p<0} \frak g_{p}$ is determined by ${\bf S} \subset \mathbb P \frak g_{-1}$
(Proposition 7 of \cite{HM02}).
\end{example}

Let $G(\widehat{\bf S})$ denote {\it the linear automorphism group of $\widehat{\bf S} \subset \frak g_{-1}$}, i.e., the  subgroup of $\GL(\mathfrak g_{-1})$ consisting of   linear automorphism  of $\widehat{\bf S} \subset \frak g_{-1}$. Then $G(\widehat{\bf S})$ acts on
$\mathfrak m({\bf S}, \mathbb P \frak g_{-1})$ %$F({\frak  g_{-1}})/I$
preserving the graded Lie algebra structure. %If $\mathfrak m=\bigoplus_{p<0}\mathfrak g_{p}$ is a fundamental graded Lie algebra determined by  $ {\bf S} $,
This $G(\widehat{\bf S})$-action defines a homomorphism $G(\widehat{\bf S}) \rightarrow G_0(\frak m)$ induced by the isomorphism between $\mathfrak m({\bf S}, \mathbb P \frak g_{-1})$ %$F({\frak  g_{-1}})/I$
and $\mathfrak m$. The induced map $G(\widehat{\bf S}) \rightarrow G_0(\frak m)$ is injective. %Let $G_0 \subset G_0(\frak m)$ denote its image. Then the relation between $G_0$-structures and ${\bf S}$-structures is given as follows.

\begin{proposition} \label{S-str and G_0-str} Let $\mathfrak m=\bigoplus_{p<0}\mathfrak g_{p}$ be a fundamental graded Lie algebra determined by a nondegenerate subvariety  $ {\bf S} $    of $\mathbb P {\mathfrak g_{-1}}$. Let $G(\widehat{\bf S})$ be \it the linear automorphism group of $\widehat{\bf S} \subset \frak g_{-1}$. Consider the induced map $G(\widehat{\bf S}) \rightarrow G_0(\frak m)$ and let $G_0 \subset G_0(\frak m)$ denote its image.
 Then, there is a one-to-one correspondence between $G_0$-structures  and ${\bf S}$-structures on filtered manifolds of type $\mathfrak m$. Furthermore, two $G_0$-structures $\mathscr P_1$ and $\mathscr P_2$ are equivalent if and only if the corresponding ${\bf S}$-structures $\mathcal S_1$ and $\mathcal S_2$ are equivalent.
\end{proposition}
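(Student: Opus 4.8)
The plan is to construct the correspondence fiberwise and then check that it is compatible with the structure-group actions and with biholomorphisms. Recall that a point of the frame bundle $\mathscr R_x$ is an isomorphism $r\colon \mathfrak m \to \Symb_x(D)$ of graded Lie algebras, and that $\mathfrak m$ is determined by $\widehat{\bf S} \subset \mathfrak g_{-1}$. First I would use this last hypothesis: since $\mathfrak m \cong F(\mathfrak g_{-1})/I$, any graded Lie algebra isomorphism $r$ is determined by its degree $-1$ component $r_{-1}\colon \mathfrak g_{-1} \to \Symb_x(D)_{-1} = D_x$, and conversely a linear isomorphism $D_x \to \mathfrak g_{-1}$ extends to a graded isomorphism of symbol algebras precisely when it carries the relation ideal $I$ to the corresponding relation ideal of $\Symb_x(D)$. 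This is the technical heart that lets me pass between the filtered-manifold picture and the first-graded-piece picture.

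Given this, I would define the correspondence as follows. Starting from an ${\bf S}$-structure $\mathcal S \subset \mathbb P D$, I associate to it the subset $\mathscr P \subset \mathscr R$ consisting of those frames $r$ whose induced projective isomorphism $\mathbb P\mathfrak g_{-1} \to \mathbb P D_x$ carries ${\bf S}$ onto the fiber $\mathcal S_x$. Conversely, given a $G_0$-subbundle $\mathscr P \subset \mathscr R$, I set $\mathcal S_x := r(\mathbb P\widehat{\bf S})$ for any $r \in \mathscr P_x$; this is well defined because any two frames in $\mathscr P_x$ differ by an element of $G_0 = \operatorname{image}(G(\widehat{\bf S}))$, and by construction such elements preserve $\widehat{\bf S}$, hence preserve its projectivization. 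The key verification is that these two assignments are mutually inverse and that $\mathscr P$ is indeed a $G_0$-subbundle: the subgroup of $G_0(\mathfrak m)$ stabilizing ${\bf S} \subset \mathbb P\mathfrak g_{-1}$ is exactly the image of $G(\widehat{\bf S})$, because a graded automorphism of $\mathfrak m$ preserving ${\bf S}$ is determined by its degree $-1$ part, which must then lie in $G(\widehat{\bf S})$. The definition of an ${\bf S}$-structure guarantees that $\mathcal S_x \cong {\bf S}$ under some graded isomorphism, so the fiber $\mathscr P_x$ is nonempty and is a single $G_0$-orbit, giving a principal $G_0$-subbundle.

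For the final assertion about equivalences, I would note that both notions of equivalence are induced by the same biholomorphism $\varphi\colon M_1 \to M_2$ acting through $d\varphi$. A biholomorphism with $d\varphi(\mathcal S_1) = \mathcal S_2$ induces, fiber by fiber, a graded Lie algebra isomorphism $\Symb_x(D_1) \to \Symb_{\varphi(x)}(D_2)$ carrying the ${\bf S}$-fiber of $\mathcal S_1$ to that of $\mathcal S_2$; composing with frames then shows $d\varphi$ sends $\mathscr P_1$ onto $\mathscr P_2$, and the converse direction is symmetric. Because the correspondence $\mathcal S \leftrightarrow \mathscr P$ was defined purely in terms of the projectivized action of frames on $\widehat{\bf S}$, it is manifestly natural with respect to $d\varphi$, so the two equivalences match.

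The main obstacle I anticipate is the well-definedness and regularity of the correspondence, not its formal naturality. Specifically, I expect the delicate point to be verifying that the subset $\mathscr P$ cut out by the condition ``$r$ carries ${\bf S}$ to $\mathcal S_x$'' is a \emph{smooth} $G_0$-subbundle — i.e.\ that it is locally trivial and that its structure group is exactly $G_0$ and not some larger stabilizer — which relies essentially on the hypothesis that $\mathfrak m$ is determined by $\widehat{\bf S}$, so that preserving ${\bf S}$ at the level of $\mathfrak g_{-1}$ is equivalent to preserving the whole graded structure. Establishing that the stabilizer of ${\bf S}$ in $G_0(\mathfrak m)$ coincides with the injective image of $G(\widehat{\bf S})$, using the freeness-modulo-relations description of $\mathfrak m$, is where the real content lies; the remaining bundle-theoretic bookkeeping is routine.
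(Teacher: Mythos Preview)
Your proposal is correct and follows essentially the same approach as the paper: both construct the correspondence fiberwise by sending a $G_0$-structure $\mathscr P$ to the ${\bf S}$-structure with $\mathcal S_x := [r]({\bf S})$ for any $r \in \mathscr P_x$, and conversely by setting $\mathscr P_x := \{ r \in \mathscr R_x : [r]({\bf S}) = \mathcal S_x \}$. Your treatment is in fact more thorough than the paper's terse argument, since you explicitly address why the stabilizer of ${\bf S}$ in $G_0(\mathfrak m)$ coincides with the image of $G(\widehat{\bf S})$ and why the equivalence statement follows from naturality under $d\varphi$, points the paper leaves implicit.
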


\begin{proof} Let $(M,D)$ be a filtered manifold of type $\mathfrak m$.
Given   a $G_0$-structure  $\mathscr P$  on $(M,D)$, define $\mathcal S_x$ by $[r] ({\bf S})$ for any $r \in \mathscr P_{x}$, where $[r]$ is the isomorphism $\mathbb P {\frak g_{-1}}  \rightarrow \mathbb P D_x$ induced by the isomorphism  $r:\mathfrak m \rightarrow \Symb_x(D)$. Then $\mathcal S_x$ is well defined and the union $\mathcal S= \cup_{x \in M} \mathcal S_x$ defines an ${\bf S}$-structure on $(M,D)$.

Conversely, let $\mathcal S \subset \mathbb P D$ be an ${\bf S}$-structure on $(M,D)$. Define $\mathscr P_x$ by $\{ r \in \mathscr R_x: \varphi ({\bf S}) = \mathcal S_x \}$. Then $\mathscr P_x$ is well defined and the union $\mathscr P= \cup_{x \in M} \mathscr P_x$ defines a $G_0$-subbundle of the frame bundle $\mathscr R$ of $(M,D)$.
\end{proof}

\subsection{Lie algebra cohomologies}

\begin{notation} Let $\frak m=\bigoplus_{p <0} \frak g_p$ be a nilpotent Lie algebra and $\Gamma$ be a representation space of $\frak m$.
Define a complex
\begin{equation} \label{eq complex}
0 \stackrel{\partial}{\longrightarrow} \Gamma \stackrel{\partial}{\longrightarrow} \Hom( \frak m, \Gamma) \stackrel{\partial}{\longrightarrow} \Hom(\wedge^2 \frak m  ,\Gamma  ) \stackrel{\partial}{\longrightarrow} \dots
\end{equation}
by
\begin{eqnarray*}
\partial \phi (X_1, \dots,  X_{q+1}) &=& \sum_{i=1}^{q+1} (-1)^{i+1}X_i.\phi(X_1, \dots,\hat X_i, \dots , X_{q+1}) \\
&& + \sum_{1 \leq i<j \leq q+1} (-1)^{i+j} \phi([X_i, X_j], X_1, \dots, \hat X_i, \dots, \hat X_j , \dots , X_{q+1})
\end{eqnarray*}
 for $\phi \in \Hom(\wedge^q \frak m, \Gamma)$ and $X_1, \dots, X_{q+1} \in \frak m$.
The cohomology space
$$H^{q}(\frak m, \Gamma):=\frac{\text{Ker}\, (\partial : \Hom(\wedge^q \frak m  , \Gamma) \rightarrow \Hom(\wedge^{q+1}\frak m,   \Gamma))}
{\text{Im}\, (\partial : \Hom( \wedge^{q-1}\frak m ,\Gamma) \rightarrow \Hom(\wedge^q\frak m, \Gamma))}
$$ is called the {\it Lie algebra cohomology space associated to the representation $\Gamma$ of} $\frak m$.

Assume that   $\Gamma$ has a  gradation  such that $\frak g_p.\Gamma_{\ell} \subset \Gamma_{p+\ell}$ for $p<0$. Then
$\Hom(\wedge^q \frak m, \Gamma)$ has an induced  grading
$$\Hom(\wedge ^q\frak m, \Gamma)_{\ell} = \bigoplus_{\ell} \Hom(\wedge^q_j \frak m, \Gamma_{j +\ell}),$$
where
$$\wedge^q_j\frak m =\sum_{\substack{j_1+\dots+j_q=j\\ j_1,\dots, j_q <0}} \frak g_{j_1} \wedge \dots \wedge \frak g_{j_q}.$$

For each $\ell$
the complex (\ref{eq complex}) restricts to
  the complex
 $$0 \longrightarrow \Gamma_{\ell} \stackrel{\partial}{\longrightarrow} \Hom(\mathfrak m, \Gamma)_{\ell} \stackrel{\partial}{\longrightarrow} \Hom(\wedge^2 \mathfrak m, \Gamma)_{\ell} \stackrel{\partial}{\longrightarrow} $$
 %associated to the action of $\mathfrak m$ on $\mathfrak g$. Here, the gradings are given by
 %\begin{eqnarray*}
%\Hom(\mathfrak m, \Gamma)_{\ell} &=& \oplus_{p<0} %\Hom(\mathfrak g_p, \Gamma_{p+\ell})\\
%\Hom(\wedge^2 \mathfrak m, \Gamma)_{\ell} &=& \oplus_{p,q<0} \Hom(\mathfrak g_p \wedge \mathfrak g_q, \Gamma_{p+q+ \ell}).
% \end{eqnarray*}
  % Since $\frak g$ is the prolongation of $(\frak m, \frak g_0)$,    $H^1(\frak m, \frak g)_{\ell}=0$ for all $\ell \geq 1$.
  %
  so that $H^q(\frak m, \Gamma)$   has a gradation
$$H^q(\frak m, \Gamma) = \bigoplus_{\ell } H^{ q}(\frak m, \Gamma)_{\ell}$$

%and
%\begin{eqnarray*}
%H^{q}(\frak m, \Gamma)_{\ell} = \frac{\text{Ker}\,(\partial :  \Hom(\wedge^q_j \frak m , \Gamma_{j+p+q-1}) \rightarrow \Hom(\wedge^{q+1}_j \frak m, \Gamma_{j+p+q-1}))}
%{\text{Im}\,  (\partial :  \Hom(\wedge^{q-1}_j \frak m, \Gamma_{j+p+q-1}) \rightarrow \Hom(\wedge^{q }_j \frak m, \Gamma_{j+p+q-1})}.
%\end{eqnarray*}
%We also denote $H^{p,q}(\frak m, \Gamma)$ by $H^q(\frak m,  \Gamma)_{p+q-1}$.
\end{notation}

 \subsection{Prolongation methods} \label{sect:prolongation methods}

 We review the theory of Cartan connections and  prolongation methods in \cite{Ta}, \cite{Mo93}, \cite{HL19},  \cite{T70}, and \cite{HM}.

 Let $\frak m=\bigoplus_{p<0} \frak g_p$ be a fundamental graded Lie algebra and $G_0$ be a connected subgroup of $G_0(\frak m)$ with Lie algebra $\frak g_0$. Then there is a unique maximal transitive graded Lie algebra    $\frak g=\bigoplus_{\ell \in \mathbb Z} \frak g_{\ell}$ extending $\mathfrak m \oplus \mathfrak g_0$, called the  {\it prolongation}  of $(\frak m, \frak g_0)$.
 For $\ell \geq 1$,
 $\mathfrak g_{\ell}$ is  given by
 $$ \{ \alpha \in \oplus_{p<0}\Hom(\mathfrak g_p, \mathfrak g_{p+\ell}):  \alpha([u,v]) =[\alpha(u),v] + [u, \alpha(v)] \text{ and } \alpha(u) \in \frak g_{\ell-1} \text{ for all } u,v  \in \mathfrak g_{-1}\},  $$
and the Lie bracket $[\,\,,\,\, ]: \frak g_{\ell} \times \frak g_k \rightarrow \frak g_{\ell+k}$ is given by:
\begin{itemize}
    \item
$[\alpha, u] =\alpha(u)$ for $\alpha \in \frak g_{\ell}$ and  $u \in \frak g_k$ for $k <0$;

\item $[\alpha, \beta](u) =[\alpha(u),\beta] +[\alpha, \beta(u)]$ where $u \in \frak m$   for $\alpha \in \frak g_{\ell}$ and $\beta \in \frak g_{k}$ for $0 \leq k $
\end{itemize}
  (Section 5 of \cite{T70}).
Assume that  $\frak g$ is finite dimensional. Then there is a Lie group $G$ and its subgroup $G^0$ which contains $G_0$, with Lie algebras $\frak g$ and $\frak g^0:=\bigoplus _{\ell \geq 0} \frak g_{\ell}$. %, such that $G \rightarrow G/G^0$ is a $G_0$-structure on a filtered manifold $G/G^0$ of type $\frak m$.
%

 %Then,    $\frak g_{\ell+1}$ ($\ell \geq 0$) can be regarded as a subspace of $F^{\ell+1}\frak{gl}(E^{(\ell)})/F^{\ell+2}$,  where $E^{(\ell)}:=\oplus_{i \leq \ell}\frak g_i$, %and is the Lie algebra of a Lie group $G_{\ell+1}$
 %% Then
 % and    corresponds to an abelian    group  $G_{\ell+1}$ ($\ell \geq 0$)  which can be regarded as a subgroup of $ F^{\ell+1}GL(E^{(\ell)})/F^{\ell+2}$.

 \begin{definition}
By {\it a Cartan connection of type} $G/G^0$, we mean a principal $G^0$-bundle $P$ on a manifold $M$ with a $\frak g$-valued 1-form $\theta$ on $P$ satisfying the following properties.
\begin{enumerate}
\item $\theta: T_zP \rightarrow \frak g$ is an isomorphism for all $z  \in P$
\item $R_a^* \theta = \mathrm{Ad} (a)^{-1}\theta$ for $a \in G^0$
\item $\theta(\widetilde A) =A $ for $A \in \frak g^0$.
\end{enumerate}
Two Cartan connections $(P_1,\theta_1)$ and $(P_2, \theta_2)$ of type $G/G^0$ are {\it isomorphic} if there is a bundle isomorphism   $\Phi: P_1 \rightarrow P_2 $ such that $\Phi^*\theta_2 = \theta_1$. The local isomorphism of two Cartan connections is
defined similarly for open sets $U_1 \subset M_1$ and $U_2 \subset M_2$.
\end{definition}

For example, the quotient map $G \rightarrow G/G^0$ with the Maurer-Cartan form $\theta_G$ of $G$ is a Cartan connection of type $G/G^0$.
 A Cartan connection $(P,\theta)$ of type $G/G^0$ is locally isomorphic to $(G, \theta_G)$ if and only if $d \theta + [\theta, \theta] =0$.
 In this case, we say that the Cartan connection $(P, \theta)$ is {\it flat}.

% We remark that Cartan connections Let $G$ be a connected Lie group and let $G_0$ be a connected subgroup of $G$.

 \begin{definition} %Let $\frak g = \frak m \oplus \frak g^0$.
 Let $(P,\theta)$ be a Cartan connection of type $G/G^0$. Then there is a function $K: P \rightarrow \Hom(\wedge^2 \frak m, \frak g)$ with
 $$d\theta+ [\theta, \theta]=\frac{1}{2}K(\theta, \theta)$$
called the {\it curvature} of $(P, \theta)$.
 \end{definition}

\begin{definition} \label{example simple}   Let $\frak g= \bigoplus_{\ell =-\mu}^{ \mu}\frak g_{\ell}$ be a simple graded Lie algebra  % Assume that $\frak g$ is the prolongation of $(\frak m, \frak g_0)$, where
and let $\frak m$ be the negative part $\bigoplus_{\ell<0} \frak g_{\ell}$. Let $G_0 \subset G_0(\frak m)$ be the subgroup with Lie algebra $\frak g_0$.
 Define a Hermitian metric $(\,\,,\,\,)$ on $\frak g$ induced by the Killing form of $\frak g$.  Denote by $\partial ^*$ the adjoint of $\partial$ with respect to $(\,\,,\,\,)$.
Then
$$\Hom(\wedge^2 \frak m, \frak g) = \partial \Hom(\frak m, \frak g) \oplus \Ker \partial ^*    $$
and
$$\Hom(\wedge^2 \frak m, \frak g)_{\ell+1} = \partial \Hom(\frak m, \frak g)_{\ell+1} \oplus (\Ker \partial ^*)_{\ell+1}    .$$
A Cartan connection $(P, \theta)$ of type $G/G^0$ is said to be {\it normal} if its curvature $K$ satisfies that its component $ K_{\ell+1}$ of degree $\ell+1$ has values in $ (\Ker \partial ^*)_{\ell+1}$ for any $\ell \geq 0$.

\end{definition}

\begin{theorem} [Theorem 2.7 and Theorem 2.9 of \cite{Ta}] \label{theorem tanaka} Let $\frak m$ and $G_0$ be as in Definition \ref{example simple}. Assume that $\frak g$ is the prolongation of $(\frak m, \frak g_0)$.
 Then for any $G_0$-structure $\mathscr P  $ on a filtered manifold $(M,D)$ of type $\frak m$, there is a normal Cartan connection $(P,\theta)$ of type $G/G^0$. %, %a Cartan connection whose curvature has value  in $\Ker \partial ^*$.
Furthermore, given two $G_0$-structures $\mathscr P_1 $ on $ (M_1, D_1)$ and $\mathscr P_2 $ on $(M_2, D_2)$, $\mathscr P_1$ and $\mathscr P_2$ are locally equivalent if and only if the corresponding normal Cartan connections $(P_1,\theta_1)$ and $(P_2, \theta_2)$ are locally isomorphic.

Given a $G_0$-structure $\mathscr P$ on a filtered manifold  $(M,D)$  of type $\frak m$,  define a vector bundle $\mathcal H^2_k$ on $M$  by  $\mathcal H^2_k:=\mathscr P \times _{G_0} H^2(\frak m, \frak g)_k$ for $k \geq 1$.   If $H^0(M, \mathcal H_k^2)$ is zero for all $k \geq 1$, then the corresponding Cartan connection $(P, \theta)$ is flat, and $\mathscr P$ is locally equivalent to the standard $G_0$-structure on $G/G^0$.  % called {\it ...}.
\end{theorem}

Theorem \ref{theorem tanaka} is extended to the case when $(\frak m, G_0)$ satisfies the condition (C) or $\mathscr P$ satisfies a pseudo-concavity type condition.

 %We say that $(\mathfrak m, G_0)$ satisfies {\it the condition (C)} if there is a subspace $W=\oplus_{\ell\geq 0}W_{\ell+1}$ of  $\Hom(\wedge^2 \mathfrak m, \mathfrak g)$ with
 %$$\Hom(\wedge ^2 \mathfrak m, \mathfrak g)_{\ell+1} = W_{\ell+1} \oplus \partial \Hom(\mathfrak m, \mathfrak g)_{\ell+1} \,\,\text{ for any }\ell \geq 0 , $$
 % which is stable under the action of $G^0$.

\begin{theorem} [Theorem 3.10.1 of \cite{Mo93}] \label{theorem morimoto} Assume that $(\mathfrak m,G_0)$ satisfies the condition (C), that is,  there is a subspace $W=\oplus_{\ell\geq 0}W_{\ell+1}$ of  $\Hom(\wedge^2 \mathfrak m, \mathfrak g)$ with
 $$\Hom(\wedge ^2 \mathfrak m, \mathfrak g)_{\ell+1} = W_{\ell+1} \oplus \partial \Hom(\mathfrak m, \mathfrak g)_{\ell+1} \,\,\text{ for any }\ell \geq 0 , $$
  which is stable under the action of $G^0$.  Then for any $G_0$-structure $\mathscr P  $ on a filtered manifold $(M,D)$ of type $\frak m$, there is a Cartan connection $(P,\theta)$ of type $G/G^0$, whose curvature has value in $W$.
Furthermore, given two $G_0$-structures $\mathscr P_1 $ on $ (M_1, D_1)$ and $\mathscr P_2 $ on $(M_2, D_2)$, $\mathscr P_1$ and $\mathscr P_2$ are locally equivalent if and only if the corresponding Cartan connections $(P_1,\theta_1)$ and $(P_2, \theta_2)$ are locally isomorphic.

\end{theorem}

For $\frak m$ and $G_0$   as in Definition \ref{example simple},
  $\Ker \partial ^*$ is stable under the action of $G^0$ (Lemma 1.12 of \cite{Ta}), and thus
thus  $(\frak m, G_0)$ satisfies the condition (C).

\begin{theorem} [Theorem 2.17 of \cite{HL19} and Theorem 2.6 of \cite{HL21}] \label{theorem HwangLi} Let $\frak g=\bigoplus_{\ell=-\nu}^{\mu} \frak g_{\ell}$ be the prolongation of $(\frak m, \frak g_0)$.
Given a $G_0$-structure $\mathscr P$ on a filtered manifold $(M,D)$ of type $\frak m$, define  define a vector bundle $\mathcal A^2_k$ on $M$  by  $\mathcal A^2_k:=\mathscr P \times _{G_0}  \left( \Hom( \wedge^2 \frak m, \frak g )_k/ \partial \Hom(\frak m, \frak g)_k \right)$ for $k \geq 1$.
If $H^0(M, \mathcal A_k)=0$ for $1 \leq k \leq \mu+\nu$, then there is a Cartan connection $(P,\theta)$.
If, furthermore, $H^0(M, \mathcal A_k)=0$ for $k \geq \mu+\nu+1$, then the corresponding  Cartan connection $(P, \theta)$ is flat.
\end{theorem}

Theorem \ref{theorem tanaka}, Theorem \ref{theorem morimoto}, and Theorem \ref{theorem HwangLi} enables us to transform the local equivalence problem of geometric structures to the local isomorphism problem  of Cartan connections, and the latter is more systematic than the former.
%
%However, confirming the condition (C) is not an easy problem in general.
%To get a local equivalence without the condition (C)
To deal with more general cases,  we weaken the requirement that $P  $ should be a principal bundle on $M$ as follows.

   A {\it geometric structure of order 0 of type} $(\frak m, G_0)$ is a $G_0$-structure $\mathscr P$ on a filtered manifold $(M,D)$ of type $\frak m$. For $\ell \geq 1$, we call a sequence    of principal bundles
  $${\bf P}^{(\ell)}: \mathscr P^{(\ell)} \longrightarrow \mathscr P^{(\ell-1)} \longrightarrow \dots \longrightarrow \mathscr P^{(0)} \longrightarrow M$$    a {\it geometric structure of order} $\ell$ {\it of type} $(\frak m, G_0, \dots, G_{\ell})$ if for $0 \leq i \leq \ell-1$,
  \begin{itemize}
      \item ${\bf P}^{(i)}:\mathscr P^{(i)} \longrightarrow \mathscr P^{(i-1)} \longrightarrow \dots \longrightarrow M$ is a geometric structure of type $(\frak m, G_0, \dots, G_i)$;
      \item $\mathscr P^{(i+1)} \longrightarrow \mathscr P^{(i)}$ is a principal $G_{i+1}$-subbundle of the universal  frame bundle $\mathscr S^{(i+1)}  {\bf P}^{(i)} \longrightarrow \mathscr P^{(i)}$  of $  {\bf P}^{(i)}$ of order $i+1$.
  \end{itemize}
   For the definition of the universal frame bundle $\mathscr S^{(i+1)}   {\bf P}^{(i)}$ of order $i+1$ of a geometric structure ${\bf P}^{(i)}$, see Definition 2.1 of  \cite{HM}.  The property we use is that a map between two geometric structures $  {\bf P}^{(i)} $, $ {\bf Q}^{(i)}$ of order $i$ induces a map between their universal frame bundles $\mathscr S^{(i+1)} {\bf  P}^{(i)}$, $ \mathscr S^{(i+1)}{\bf Q}^{(i)}$ of order $i+1$.

% We write a  sequence $\mathscr P^{(\ell)} \longrightarrow \mathscr P^{(\ell-1)} \longrightarrow \dots \longrightarrow \mathscr P^{(0)} \longrightarrow M$ of principal bundles simply as $\mathscr P^{(\ell)}$ when no confusion can arise, and define

 The equivalence of two geometric structures ${\bf P}^{(\ell)} $ and ${\bf Q}^{(\ell)} $ is defined  inductively as follows.
Two geometric structures ${\bf P}^{(\ell)}:\mathscr P^{(\ell)} \longrightarrow \mathscr P^{(\ell-1)} \longrightarrow \dots \longrightarrow M$ and ${\bf Q}^{(\ell)}:\mathscr Q^{(\ell)} \longrightarrow \mathscr Q^{(\ell-1)} \longrightarrow \dots \longrightarrow M$ are {\it equivalent} if their truncations ${\bf P}^{(\ell-1)}$ and ${\bf Q}^{(\ell-1)}$ are equivalent and the lifting $\mathscr S^{(\ell)} {\bf P}^{(\ell-1)} \rightarrow \mathscr S^{(\ell)} {\bf Q}^{(\ell-1)}$ of their equivalence   maps $\mathscr P^{(\ell)}$ onto $\mathscr Q^{(\ell)}$. \\

Fix a set of subspaces $W=\{W^1_{\ell}, W^2_{\ell+1}\}_{\ell \geq 0}$ such that
\begin{eqnarray*}
 \Hom (\frak m,  \frak g )_{\ell} &= &W_{\ell}^1 \oplus \partial  \frak g_{\ell}\\
 \Hom(\wedge^2\frak m,   \frak g )_{\ell+1} &= &W_{\ell+1}^2 \oplus \partial \Hom(\frak m,  \frak g )_{\ell+1} .
 \end{eqnarray*}
Note that we don't require that the complement $\oplus_{\ell \geq 0} W^2_{\ell+1}$ of $\partial \Hom(\frak m, \frak g)$ should be stable under the action of $G^0$.

\begin{theorem} [Theorem 8.3 of \cite{T70}, Theorem 3.1  of \cite{HM}] \label{thm:prolongation methods}

  Let $\mathscr P$ be a $G_0$-structure on a filtered manifold $(M,D)$ of type $\frak m$.
  Then for each $\ell \geq 1$, there is a geometric structure
 $$  \mathscr S_W^{(\ell)}\mathscr P \stackrel{G_{\ell}}{\longrightarrow} \mathscr S_W^{(\ell-1)}\mathscr P \longrightarrow \dots \longrightarrow \mathscr S_W^{(1)}\mathscr P \stackrel{G_1}{\longrightarrow} \mathscr P \stackrel{G_0}{\longrightarrow} M $$
 of type $(\frak g_-, G_0, \cdots, G_{\ell})$.
 %such that each $\mathscr S_W^{(i+1)}\mathscr P \stackrel{G_{i+1}}{\longrightarrow} \mathscr S_W^{(i)}\mathscr P$ is a principal subbundle of the frame bundle of $\mathscr S_W^{(i)}\mathscr P$ of order $i+1$, where $0 \leq i \leq \ell$.
  %
  Furthermore, two $G_0$-structures $\mathscr P$ and $\mathscr Q$ are   equivalent if and only if the corresponding geometric structures $\mathscr S_W^{(\ell)}\mathscr P$ and $\mathscr S_W^{(\ell)}\mathscr Q$ are  equivalent.
\end{theorem}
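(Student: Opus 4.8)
The plan is to prove both assertions simultaneously by induction on $\ell$, building the tower one level at a time and, at the $\ell$-th level, reducing the order-$(\ell+1)$ universal frame bundle to the prolongation group $G_{\ell+1}$ by normalizing a structure function against the complements fixed in $W$. For the existence, set $\mathscr S_W^{(0)}\mathscr P = \mathscr P$, and suppose inductively that the order-$\ell$ structure $\mathscr P^{(\ell)} := \mathscr S_W^{(\ell)}\mathscr P \to \dots \to M$ of type $(\frak g_-, G_0, \dots, G_\ell)$ has been produced, together with the tautological form on $\mathscr P^{(\ell)}$ determined by the complement $W^1_\ell$ to $\partial \frak g_\ell$ in $\Hom(\frak g_-, \frak g)_\ell$. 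I would then form the universal frame bundle $\mathscr S^{(\ell+1)}\mathscr P^{(\ell)} \to \mathscr P^{(\ell)}$ of order $\ell+1$ (Definition 2.1 of \cite{HM}) and, using this tautological form, define on it a structure function
\[
\kappa \colon \mathscr S^{(\ell+1)}\mathscr P^{(\ell)} \longrightarrow \Hom(\wedge^2 \frak g_-, \frak g)_{\ell+1}.
\]
The decisive point is the transformation law of $\kappa$: the fibers of $\mathscr S^{(\ell+1)}\mathscr P^{(\ell)} \to \mathscr P^{(\ell)}$ are modeled on $\Hom(\frak g_-, \frak g)_{\ell+1}$, and along them $\kappa$ varies by the affine action whose linear part is the coboundary $\partial$. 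Granting this, the splitting $\Hom(\wedge^2 \frak g_-, \frak g)_{\ell+1} = W^2_{\ell+1} \oplus \partial \Hom(\frak g_-, \frak g)_{\ell+1}$ shows that the subset
\[
\mathscr S_W^{(\ell+1)}\mathscr P := \{\, \kappa \in W^2_{\ell+1} \,\}
\]
meets each fiber in a single orbit whose stabilizer is $\Ker\bigl(\partial \colon \Hom(\frak g_-, \frak g)_{\ell+1} \to \Hom(\wedge^2\frak g_-, \frak g)_{\ell+1}\bigr)$. By the very definition of the prolongation $\frak g$ this kernel is $\frak g_{\ell+1}$, so $\mathscr S_W^{(\ell+1)}\mathscr P \to \mathscr P^{(\ell)}$ is a principal $G_{\ell+1}$-subbundle of $\mathscr S^{(\ell+1)}\mathscr P^{(\ell)}$. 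Equipping it with the tautological form determined by $W^1_{\ell+1}$ closes the induction and yields the asserted geometric structure of type $(\frak g_-, G_0, \dots, G_{\ell+1})$.

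For the equivalence assertion, the nontrivial direction is that an equivalence of the $G_0$-structures propagates up the tower. If $\varphi \colon M_1 \to M_2$ induces an isomorphism $\mathscr P \to \mathscr Q$, then by the functoriality of the universal frame bundle recalled above it lifts, order by order, to isomorphisms $\mathscr S^{(\ell+1)}\mathscr P^{(\ell)} \to \mathscr S^{(\ell+1)}\mathscr Q^{(\ell)}$. Since the tautological form, and hence the structure function $\kappa$, are intrinsic — depending only on the fixed complements in $W$, which are common to source and target — such a lift carries $\kappa_{\mathscr P}$ to $\kappa_{\mathscr Q}$, and therefore maps the normalized locus $\{\kappa \in W^2_{\ell+1}\}$ of $\mathscr P$ onto that of $\mathscr Q$. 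Thus the isomorphism restricts to $\mathscr S_W^{(\ell+1)}\mathscr P \to \mathscr S_W^{(\ell+1)}\mathscr Q$, and induction gives the equivalence of the geometric structures. The converse direction is immediate from the inductive definition of equivalence of geometric structures, whose order-$\ell$ instance requires equivalence of the truncations all the way down to order $0$, that is, of $\mathscr P$ and $\mathscr Q$ themselves.

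The main obstacle is the algebraic heart of the inductive step: verifying that the structure function $\kappa$ transforms affinely with linear part exactly the Lie algebra cohomology coboundary $\partial$. This is what guarantees both that the normalization $\kappa \in W^2_{\ell+1}$ is achievable on every fiber (surjectivity of the action in the $\partial$-image directions) and that the residual freedom is precisely $\Ker\partial = \frak g_{\ell+1}$, so that the reduced bundle has structure group $G_{\ell+1}$ rather than something larger or smaller. A secondary point requiring care is the naturality used in the equivalence statement: one must check that $\kappa$ and the normalized reduction depend on no auxiliary data beyond the fixed splitting $W$, so that isomorphisms of lower-order structures genuinely carry the order-$(\ell+1)$ normalized bundle of one onto that of the other.
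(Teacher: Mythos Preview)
The paper does not supply its own proof of this theorem: it is stated verbatim as Theorem~3.1 of \cite{HM} and used as a black box, so there is no in-paper argument to compare against. Your proposal is a faithful outline of the standard prolongation procedure that \cite{HM} develops --- inductive normalization of a structure function on the universal frame bundle against the fixed complements $W^2_{\ell+1}$, identifying the residual group with $\Ker\partial=\frak g_{\ell+1}$, and propagating equivalences by the naturality of the universal frame construction --- and as such it matches the approach one expects in the cited reference. The two points you flag as requiring care (the affine transformation law for $\kappa$ with linear part $\partial$, and the intrinsic nature of the normalization) are indeed the substantive verifications, but they are the content of \cite{HM} rather than of the present paper.
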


We call the limit $\mathscr S_W \mathscr P=\lim_{\ell} \mathscr S_W^{(\ell)}\mathscr P$ the {\it $W$-normal complete step prolongation of} $\mathscr P$. \\

As in  the case of $G_0$-structures modeled on a rational homogeneous variety $G/G^0$, we get a  local equivalence of geometric structures by the vanishing of   sections of vector bundles $  \mathcal H_k^2  =\mathscr P \times _{G_0} H^2(\frak m, \frak g)_k$.

\begin{proposition} [Theorem 7.4 of \cite{HM}] \label{prop:prolongation methods} Let   $\mathscr P$ be a $G_0$-structure on a filtered manifold  $(M,D)$  of type $\frak m$.
  If  $H^0(M, \mathcal H^2_k)$ is zero for all $k \geq 1$, then the $W$-normal complete step  prolongation  $\mathscr S_W \mathscr P $ of $\mathscr P$ is a Cartan connection of type $G/G^0$ which is flat, and $\mathscr P$ is locally equivalent to the standard $G_0$-structure on $G/G^0$.
\end{proposition}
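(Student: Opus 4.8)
The plan is to run the step-prolongation tower of Theorem \ref{thm:prolongation methods} and to track, at each level, a single obstruction---the structure function (generalized curvature)---which the $W$-normalization forces to take values in the harmonic complement $W^2 \cong H^2(\frak m, \frak g)$. The vanishing hypothesis on the sections $H^0(M, \mathcal H^2_k)$ will then kill these obstructions degree by degree, collapsing the tower into a flat Cartan geometry, from which local equivalence to the model follows by the standard integration argument.

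First I would invoke Theorem \ref{thm:prolongation methods} to produce the complete step prolongation $\mathscr S_W^{(\ell)}\mathscr P \to \cdots \to \mathscr P \to M$ of type $(\frak g_-, G_0, \dots, G_\ell)$. Because $\frak g$ is assumed finite dimensional, $\frak g_\ell = 0$, and hence $G_\ell$ is trivial, for all $\ell$ beyond some $N$; thus the tower stabilizes, and $P:=\mathscr S_W\mathscr P=\mathscr S_W^{(N)}\mathscr P$ is a principal bundle over $M$ whose structure group is the product $G_0 G_1 \cdots G_N$, which is precisely $G^0$ with Lie algebra $\frak g^0=\bigoplus_{\ell\geq 0}\frak g_\ell$. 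On $P$ one assembles, from the tautological (soldering) one-forms carried at the successive levels of the prolongation, a single $\frak g$-valued one-form $\theta$. By construction $\theta$ restricts to a linear isomorphism $T_zP \to \frak g$ at every $z\in P$, reproduces fundamental vector fields so that $\theta(\widetilde A)=A$ for $A\in\frak g^0$, and is $G^0$-equivariant, $R_a^*\theta = \mathrm{Ad}(a)^{-1}\theta$; these are exactly axioms (1)--(3) of a Cartan connection of type $G/G^0$.

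The heart of the matter is the curvature analysis. A priori the obstruction to matching $\mathscr S_W^{(\ell)}\mathscr P$ with the flat model lies in $\Hom(\wedge^2\frak g_-, \frak g)$; the defining feature of the $W$-normalization is the choice of complements $W^2_{\ell+1}$ to $\partial\Hom(\frak g_-,\frak g)_{\ell+1}$, and with respect to these choices the step-$\ell$ structure function takes values in $W^2_{\ell+1}$, which is canonically identified with $H^2(\frak m,\frak g)_{\ell+1}$. I would check that this structure function is $G_0$-equivariant and therefore defines a genuine section of $\mathcal H^2_{\ell+1}=\mathscr P\times_{G_0}H^2(\frak m,\frak g)_{\ell+1}$ over $M$. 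Running induction on $\ell$, the hypothesis $H^0(M,\mathcal H^2_k)=0$ for all $k\geq 1$ then forces every structure function to vanish identically; equivalently $\theta$ satisfies the Maurer--Cartan equation $d\theta+\tfrac12[\theta,\theta]=0$, so the Cartan connection is flat.

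Flatness concludes the argument: a Cartan connection of type $G/G^0$ with vanishing curvature is, near each point, equivalent to the Maurer--Cartan form of $G$, and integrating the flat $\frak g$-valued coframe $\theta$ by the Frobenius theorem yields local diffeomorphisms of $M$ onto open subsets of $G/G^0$ carrying $\mathscr P$ to the standard $G_0$-structure; hence $\mathscr P$ is locally isomorphic to the standard $G_0$-structure on $G/G^0$. The hard part, I expect, is the third paragraph: one must verify that the $W$-normalization removes exactly the $\partial$-exact ambiguity so that only the $H^2$-part of the curvature survives, and that this part is a well-defined equivariant section descending to $M$ rather than merely a function on the prolonged bundle. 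Since $\frak g$ need not be semisimple, Kostant's harmonic identification is unavailable, and all the bookkeeping---the grading index $k$ versus the prolongation order $\ell$, and the equivariance---must be carried out directly against the abstract complement $W$.
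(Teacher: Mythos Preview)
The paper does not supply its own proof of this proposition: it is quoted verbatim as Theorem~7.4 of \cite{HM} and used as a black box, so there is no in-paper argument to compare against. Your outline is the standard Tanaka--Morimoto prolongation scheme, which is exactly what \cite{HM} carries out.

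That said, two points in your sketch would need tightening before it becomes a proof. First, you assert that $W^2_{\ell+1}$ ``is canonically identified with $H^2(\frak m,\frak g)_{\ell+1}$''; this is not correct. By definition $W^2_{\ell+1}$ is a complement to $\partial\Hom(\frak g_-,\frak g)_{\ell+1}$ inside the \emph{full} cochain space $\Hom(\wedge^2\frak g_-,\frak g)_{\ell+1}$, so it is isomorphic to the quotient by coboundaries, which in general strictly contains $H^2$. What rescues the argument is a Bianchi-type identity: at each prolongation step the lowest-degree nonzero component of the structure function is automatically $\partial$-closed, hence lies in $W^2_{\ell+1}\cap\Ker\partial$, and \emph{that} subspace injects into $H^2(\frak m,\frak g)_{\ell+1}$ because $W^2_{\ell+1}\cap\mathrm{Im}\,\partial=0$. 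Only then does the vanishing of the associated section of $\mathcal H^2_{\ell+1}$ force the structure function itself to vanish. Second, you check only $G_0$-equivariance, but the structure function lives on $\mathscr S_W^{(\ell)}\mathscr P$, not on $\mathscr P$; to obtain a section of $\mathscr P\times_{G_0}H^2(\frak m,\frak g)_{\ell+1}$ over $M$ you must also verify invariance under the abelian groups $G_1,\dots,G_\ell$ so that the function descends through the tower. Your final paragraph already anticipates that this bookkeeping is the crux; these are the two concrete items to supply.
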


We will use Proposition \ref{prop:prolongation methods} to prove  Theorem \ref{main results} (see Section \ref{sect:local equivalence} and Section \ref{sect:local equivalence B3}).
%\begin{remark} 
%As a result, we don't need to show the vanishing of holomorphic sections of all types but  the vanishing of only those associated with the   Lie algebra cohomology $H^2(\mathfrak m, \mathfrak g)$.

%\end{remark} 

\section{Smooth horospherical varieties of Picard number one} \label{horospherical varieties}

\subsection{Classifications}

Let $\mathfrak l$ be a semisimple Lie algebra. We fix a Cartan subalgebra $\mathfrak h$ of $\mathfrak l$ and let $\Phi$ be the set of roots of $\mathfrak l$ relative to $\mathfrak h$. The root space decomposition of $\mathfrak l$ is given by
 \begin{eqnarray*} \mathfrak l =\mathfrak h \oplus \bigoplus_{\alpha \in \Phi} \mathfrak l_{\alpha}, \end{eqnarray*}
where $\mathfrak l_{\alpha}
$ is the root space of $\alpha \in \Phi$. For any root $\alpha$, let $U_{\alpha}$ be the root group of $\alpha$.

\begin{definition}\label{def: characteristic element E}
Let $\{\alpha_1, \cdots, \alpha_m \}$ be a set of simple roots of $\mathfrak l$. We define the \emph{characteristic element  associated with $\alpha_i$} as an element $E_{\alpha_i}$ in $\mathfrak h$ such that $\alpha_j(E_{\alpha_i})= \delta_{i,j}$ for $i,j=1, \dots, m$. Define a gradation $\mathfrak l=\bigoplus_{p\in \mathbb Z} \mathfrak l_p$  on $\mathfrak l$ by $\mathfrak l_p = \{ v \in \mathfrak l : [E_{\alpha_i}, v] = p v\}$  for $p \in \mathbb Z$, which is called the \emph{gradation associated with $\alpha_i$}. In general, given a representation $V$ of $\frak l$ we define the \emph{gradation associated with $\alpha_i$} in a similar way.
\end{definition}

\begin{notation} Given a set $\{\alpha_1, \cdots, \alpha_m \}$ of simple roots of  $\frak l$, let $\{\varpi_1, \dots, \varpi_m\}$ be the set of fundamental weights. For each $i=1, \dots, m$, let $P^{\alpha_i}$ denote the maximal parabolic subgroup associated to $\alpha_i$ and let $V_{\varpi_i}$ denote the irreducible representation with highest weight $\varpi_i$ of semisimple Lie group $L$ corresponding to $\frak l$.
\end{notation}

For a reductive algebraic group $L$, a normal $L$-variety is said to be \emph{horospherical} if
it has an open $L$-orbit $L/H$ whose isotropy group $H$ contains the unipotent part of a Borel subgroup of $L$.

\begin{theorem}[Theorem 0.1 and Theorem 1.11 of \cite{Pa}]\label{Horospheical}
Let $L$ be a reductive group.
 Let $X$ be a smooth nonhomogeneous projective horospherical $L$-variety with Picard number one.
 Then $X$ is uniquely determined by its two closed $L$-orbits $ Y$ and $Z$, isomorphic to $L/P_Y$ and $L/P_Z$, respectively; and $(L, \alpha, \beta)$ in one of the triples of the following list, where $P_Y=P^\alpha$ and $P_Z=P^\beta$ for simple roots $\alpha$ and $\beta$.
 \begin{enumerate}
   \item[\rm(1)] $(B_m,{\alpha_{m-1}},{\alpha_{m}})$ for $m\geq3$;
   \item[\rm(2)] $(B_3,{\alpha_{1}},{\alpha_{3}})$;
   \item[\rm(3)] $(C_m,{\alpha_{i+1}},{\alpha_{i}})$ for $m\geq2$, $i\in \{1,\ldots, m-1\}$;
   \item[\rm(4)] $(F_4,{\alpha_{2}},{\alpha_3})$;
   \item[\rm(5)] $(G_2,{\alpha_{2}},{\alpha_1})$.
 \end{enumerate}
  Moreover, the automorphism group $\Aut(X)$ of $X$ is
  $(SO(2m+1)\times \mathbb C^*)\ltimes V_{\varpi_m}$,   $(SO(7)\times \mathbb C^*)\ltimes V_{\varpi_3}$,  $((Sp(2m)\times \mathbb C^*)/\{\pm1\})\ltimes V_{\varpi_1}$,    $(F_4\times \mathbb C^*)\ltimes V_{\varpi_4}$  and $(G_2\times \mathbb C^*)\ltimes V_{\varpi_1}$, respectively.

  Finally, $\Aut(X)$ has two orbits in $X$,   $Z$ and the complement of $Z$ in $X$.
\end{theorem}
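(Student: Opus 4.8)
The plan is to establish all three assertions---the classification, the automorphism groups, and the orbit decomposition---through the Luna--Vust theory of spherical embeddings specialized to the horospherical setting. First I would record the combinatorial data attached to the open orbit. Writing $P=N_L(H)$, the excerpt already gives that $L/H\to L/P$ is a principal $(\mathbb C^*)^r$-bundle, where $r$ is the rank of the lattice $M$ of characters of $P$ trivial on $H$; set $N_{\mathbb Q}=\Hom(M,\mathbb Q)$. The \emph{colors} of $L/H$ are indexed by the simple roots $\gamma$ lying outside the Levi of $P$, each color $D_\gamma$ carrying an image $\varrho(D_\gamma)\in N_{\mathbb Q}$ obtained by restricting $\gamma^\vee$ to $M$. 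Complete normal $L$-embeddings of $L/H$ then correspond to complete colored fans in $N_{\mathbb Q}$, with $L$-orbits corresponding to colored cones and closed orbits to the maximal ones.

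Next I would use the Picard-number constraint to reduce to rank one. For a complete spherical variety the divisor class group---which equals the Picard group when $X$ is smooth---is the quotient of the free abelian group on the colors and the $L$-stable prime divisors by the image of $M$, so $\rho(X)=|\mathcal D|+\#\{L\text{-stable prime divisors}\}-r$. Imposing $\rho(X)=1$ and analyzing which complete colored fans can occur forces $r=1$; thus $N_{\mathbb Q}=\mathbb Q$ and the fan is supported on the two half-lines $\mathbb Q_{\geq0}$ and $\mathbb Q_{\leq0}$. Nonhomogeneity rules out any $L$-stable divisor and leaves exactly two colors, one attached to each half-line, whence $P=P^\alpha\cap P^\beta$ for a pair of simple roots $\alpha,\beta$ and the two maximal colored cones cut out the closed orbits $L/P^\alpha$ and $L/P^\beta$ of codimension $\geq2$. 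At this point $X$ is encoded by the triple $(L,\alpha,\beta)$.

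The heart of the matter, and the step I expect to be the main obstacle, is the smoothness condition. Via the local structure theorem, $X$ is covered by two $P$-stable charts modeled on $L\times^{P}S$ for rank-one horospherical slices $S$, and I would apply the smoothness criterion for spherical varieties to each chart. This converts smoothness into an explicit numerical condition relating the pairings $\langle\varpi_\alpha,\beta^\vee\rangle$ and $\langle\varpi_\beta,\alpha^\vee\rangle$, the generator of $M$, and the relative position of $\alpha$ and $\beta$ in the Dynkin diagram of $L$. A finite case-by-case check over all irreducible types and all admissible pairs $(\alpha,\beta)$---reducible $L$ being excluded because it would raise $\rho(X)$---then yields precisely the five families (1)--(5); the delicate part is the bookkeeping of colors and slices in each Dynkin type.

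Finally, for the automorphism group and the orbit structure I would realize each $X$ concretely as the closure of the $L$-orbit of $[v_{\varpi_\alpha}+v_{\varpi_\beta}]$ in $\mathbb P(V_{\varpi_\alpha}\oplus V_{\varpi_\beta})$, where $v_{\varpi}$ is a highest weight vector, so that the two closed orbits appear as $\mathbb P(L\cdot v_{\varpi_\alpha})\cong L/P^\alpha$ and $\mathbb P(L\cdot v_{\varpi_\beta})\cong L/P^\beta$. I would then compute $\aut(X)=H^0(X,TX)$ using the description of global vector fields on a spherical variety in terms of its colored data: the reductive part comes out as the listed semisimple group together with the central $\mathbb C^*$ of the rank-one torus, while the unipotent radical is the indicated fundamental representation $V_{\varpi}$, appearing as the extra global vector fields. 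Setting $G:=\Aut(X)^\circ$, which strictly contains $L$, I would check that these extra unipotent automorphisms carry the closed orbit $Y=L/P^\alpha$ into the open $L$-orbit, fusing the two into a single open $G$-orbit while fixing $Z=L/P^\beta$; hence $\Aut(X)$ has exactly the two orbits $Z$ and $X\setminus Z$, completing the proof.
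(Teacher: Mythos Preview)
The paper does not prove this statement at all: Theorem~\ref{Horospheical} is quoted verbatim from Pasquier~\cite{Pa} (Theorem~0.1 and Theorem~1.11 there) and is used as background input, so there is no proof in the present paper to compare your proposal against. Your outline is a reasonable high-level sketch of Pasquier's original argument---Luna--Vust combinatorics to reduce to rank one with two colors, the smoothness criterion to pin down the five triples, and Demazure/Brion-type computations of $H^0(X,TX)$ for the automorphism group---but none of that is carried out or even summarized in the paper under review.
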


In Theorem \ref{Horospheical}, $Y=L/P_Y$ is contained in the open orbit $X^0$ of $\Aut(X)$ in $X$. In particular,  the base point $o$ of $Y=L/P_Y$ is contained in $X^0$. We will take $o$ as the base point of the quasi- homogeneous variety $X$. Let $\mathfrak g=(\mathfrak l+\mathbb C) \rhd U$ be the Lie algebra of $\Aut(X)$, where $\frak l$ is the Lie algebra of $L$. The characteristic element associated with $\alpha$ as in Definition \ref{def: characteristic element E} gives a gradation on $\frak l$ and a gradation on $U$. Then we shift the gradation on $U$ to identify the part $U_{-1}\oplus \bigoplus_{p<0} \frak l_{p}$ with the tangent space of $X$ at $o\in X$.

\begin{proposition}[Section 2, Proposition 48 and Proposition 49 of \cite{Ki}]\label{tangentsp of X}
Let $X$ be a smooth nonhomogeneous projective horospherical variety $(L, \alpha, \beta)$ of Picard number one. Let $G=\Aut(X)$ and let $\mathfrak g=(\mathfrak l+\mathbb C) \rhd U$ be the corresponding Lie algebra.
Then,
 \begin{enumerate}
    \item[(1)] there is a grading on $\frak l$ and $U$,
\begin{eqnarray*}
\mathfrak l=\bigoplus_{k=-\mu }^{\mu}\mathfrak l_k \text{ \, and \, }  U=\bigoplus_{k=-1}^{\nu }U_k,
\end{eqnarray*}
such that, with the   grading being defined    by
 \begin{eqnarray*}
 \mathfrak g_0&:=&(\mathfrak l_0\oplus \mathbb C)\rhd U_{ 0} \\
 \mathfrak g_p&:=&\mathfrak l_p \oplus U_{p} \text{ for } p \not=0,
 \end{eqnarray*}
 the negative part $\mathfrak m=\bigoplus_{p<0}\mathfrak g_{p}$ of $\frak g$  is identified with the tangent space of $X$ at the base point $o$ of $X$. By convention we set $\frak l_k=0$ for $k<-\mu$ or $k > \mu$, and $U_k=0$ for $k <-1$ or $k >\nu$.
 \item[(2)] $H^1(\frak m, \frak g)_{p}=0$ for $p>0$.
 \item[(3)] $\frak g=\bigoplus_{\ell \in \mathbb Z} \frak g_{\ell}$ is the prolongation of $(\frak m, \frak g_0)$.
 \end{enumerate}
\end{proposition}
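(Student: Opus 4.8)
The plan is to build everything from the explicit description of $G=\Aut(X)$ in Theorem~\ref{Horospheical}: one has $\mathfrak g=(\mathfrak l\oplus\mathbb C)\rhd U$, where $U=V_\varpi$ is the indicated fundamental $L$-module, realized as an abelian ideal of $\mathfrak g$. Since $\Aut(X)$ has exactly the two orbits $Z$ and $X^0=X\setminus Z$, and the base point $o$ of $Y=L/P_Y$ lies in $X^0$, we have $X^0=G/G^0$ with $G^0=\mathrm{Stab}_G(o)$ and $T_oX\cong\mathfrak g/\mathfrak g^0$, where $\mathfrak g^0=\mathrm{Lie}(G^0)$. As $o$ is the base point of $Y=L/P^\alpha$, its stabilizer in $L$ is $P^\alpha$, so $\mathfrak g^0\cap\mathfrak l=\mathfrak p^\alpha$.

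To prove (1) I would first produce the grading element. Let $E_\alpha\in\mathfrak h$ be the characteristic element of the root $\alpha$ defining $Y$, so that the associated grading of $\mathfrak l$ has nonnegative part $\mathfrak p^\alpha$ and takes the form $\mathfrak l=\bigoplus_{k=-\mu}^{\mu}\mathfrak l_k$, where $\mu$ is the coefficient of $\alpha$ in the highest root of $\mathfrak l$; in particular $\mu\geq2$ and $\mathfrak l_-$ is generated by $\mathfrak l_{-1}$. The same $E_\alpha$ grades the irreducible $L$-module $U$ through its weights, and I would add a suitable multiple of the generator $z$ of the central $\mathbb C$ (which acts by a scalar on $U$ and by zero on $\mathfrak l$) so that the shifted element $\zeta=E_\alpha+t\,z$ has integral $\mathrm{ad}$-eigenvalues and makes $U=\bigoplus_{k=-1}^{\nu}U_k$ begin exactly in degree $-1$. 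Declaring $\mathfrak g_p=\mathfrak l_p\oplus U_p$ for $p\neq0$ and $\mathfrak g_0=(\mathfrak l_0\oplus\mathbb C)\rhd U_0$ then gives the asserted grading. It remains to identify $\mathfrak g^0$ with $\mathfrak g_{\geq0}$: the directions $\mathfrak p^\alpha$, the central $\mathbb C$ (which acts trivially on the flag variety $Y$), and the nonnegative part $U_{\geq0}$ all fix $o$, while $\mathfrak l_-\cdot o$ spans $T_oY$ and $U_{-1}\cdot o$ is transverse to $Y$; comparing $\dim\mathfrak m$ with $\dim X$ from Pasquier's classification then forces $\mathfrak g^0=\mathfrak g_{\geq0}$ and hence $T_oX\cong\mathfrak g/\mathfrak g_{\geq0}=\mathfrak m$.

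For (2) and (3) I would record that, given (1), the two statements are equivalent. A $1$-cochain of positive degree $p$ is a cocycle precisely when it is a degree-$p$ derivation $\mathfrak m\to\mathfrak g$, and the coboundaries in degree $p$ are the inner derivations $X\mapsto[X,\eta]$ with $\eta\in\mathfrak g_p$. Because $\mathfrak m$ is fundamental and $\mathfrak g$ is transitive and finite dimensional (both of which hold by the explicit structure in (1)), a short induction on degree shows that $H^1(\mathfrak m,\mathfrak g)_p=0$ for all $p>0$ if and only if every degree-$p$ derivation is inner, i.e. if and only if $\mathfrak g_p$ agrees with the Tanaka prolongation space in each positive degree. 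Thus it suffices to prove the vanishing in (2), and (3) follows.

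The heart of the argument, and the main obstacle, is the vanishing $H^1(\mathfrak m,\mathfrak g)_p=0$ for $p>0$, which does not follow from Kostant's theorem directly since $\mathfrak g$ is not semisimple. The reduction I would use exploits that the grading forces $[\mathfrak l_-,U_{-1}]\subseteq U_{<-1}=0$ and $[U_{-1},U_{-1}]=0$, so that $\mathfrak m=\mathfrak l_-\oplus U_{-1}$ is a direct product of Lie algebras with $U_{-1}$ an abelian ideal. Feeding the short exact sequence $0\to U\to\mathfrak g\to\mathfrak l\oplus\mathbb C\to0$ of $\mathfrak m$-modules into the long exact cohomology sequence reduces the claim to the vanishing of $H^1(\mathfrak m,U)_p$ and $H^1(\mathfrak m,\mathfrak l\oplus\mathbb C)_p$ for $p>0$. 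On each of these modules the factor $U_{-1}$ acts trivially (because $U$ is abelian and $[U,\mathfrak l]\subseteq U$), so the Künneth formula for the product $\mathfrak m=\mathfrak l_-\oplus U_{-1}$ expresses their cohomology as $H^{\bullet}(\mathfrak l_-,-)\otimes\wedge^{\bullet}U_{-1}^{*}$, reducing everything to the Lie algebra cohomology of the nilpotent part $\mathfrak l_-$ of the semisimple algebra $\mathfrak l$, to which Kostant's theorem applies. The delicate point is then to check, degree by degree, that after tensoring the Kostant classes with $\wedge^{\bullet}U_{-1}^{*}$ no class survives in positive grading degree; this is precisely the nonsemisimple bookkeeping that the $H^2$-computation of Section~\ref{sect:H2 cohomology} is built for, and the same machinery settles the present degree-one case.
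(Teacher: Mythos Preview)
The paper does not give its own proof of this proposition; it is quoted from \cite{Ki}. What the paper does supply is the remark preceding Lemma~\ref{lem: cohomology lemma 1 old version}, which pinpoints the one genuine difficulty in \cite{Ki}'s argument. Your outline for (1) and for the equivalence of (2) and (3) is fine, and your reduction strategy (long exact sequence for $0\to U\to\mathfrak g\to\mathfrak l\oplus\mathfrak z\to 0$ together with the K\"unneth decomposition for $\mathfrak m=\mathfrak l_-\times U_{-1}$) is a reasonable reorganization of the computation, somewhat different from the direct cochain-splitting that \cite{Ki} and Section~\ref{sect:H2 cohomology} carry out.

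The gap is in your last paragraph. It is \emph{not} true that ``no class survives in positive grading degree'' after the K\"unneth reduction. For the trivial module $\mathfrak z$ one has $H^1(\mathfrak l_-,\mathfrak z)=\Hom(\mathfrak l_-/[\mathfrak l_-,\mathfrak l_-],\mathfrak z)\cong\mathfrak l_{-1}^*$, sitting in degree $+1$, and in addition $H^0(\mathfrak l_-,\mathfrak z)\otimes U_{-1}^*\cong U_{-1}^*$ also sits in degree $+1$. Hence $H^1(\mathfrak m,\mathfrak l\oplus\mathfrak z)_1\neq 0$, so your long exact sequence does not give $H^1(\mathfrak m,\mathfrak g)_1=0$ by mere vanishing of the flanking terms: you must show that these classes lie in the kernel of $H^1(\mathfrak m,\mathfrak l\oplus\mathfrak z)_1\to H^1(\mathfrak m,\mathfrak g)_1$, equivalently that the connecting map into $H^2(\mathfrak m,U)_1$ is injective on them. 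This is exactly the ``main difficulty'' the paper flags before Lemma~\ref{lem: cohomology lemma 1 old version}: the crucial input is that lemma (Lemma~27(3) of \cite{Ki}), which says that for $A\in U_0$ the map $\partial_0 A:\mathfrak l_{-1}\to U_{-1}$ cannot have one-dimensional image unless it vanishes. Without invoking this (or an equivalent statement) your argument does not close; the hand-wave ``the same machinery settles the present degree-one case'' hides precisely this nontrivial step.
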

As $\frak l_0$-representations, $\frak l_k$ and $U_k$ are irreducible, and as $\frak g_0$-representations, $\frak g_k$ is irreducible (the proof of Lemma 27 of \cite{Ki}). Due to (2) and (3), we could consider the prolongation methods of section \ref{sect:prolongation methods} for a smooth nonhomogeneous projective horospherical variety of Picard number one.

\subsection{Varieties of minimal rational tangents}

In this section we will describe the varieties of minimal rational tangents of horospherical varieties in the list of Theorem \ref{Horospheical}.
We will use the same notions as in Proposition \ref {tangentsp of X}. As we mentioned in the previous subsection, we take the base point $o$ of $Y=L/P_Y$ as the base point of $X$.
 For the root $\alpha$, we define $C_{\alpha}:=\overline{U_{-\alpha}.o}\subset Y$. Then, $C_{\alpha}$ is a minimal rational curve in $Y$ and thus in $X$.

For an arbitrary reductive group $L$ and for a finitely many irreducible $L$-representation spaces $V_i$ ($i=1, \dots, r$),  let $\mathcal H_{L}(\oplus_{i=1}^r V_i)$ denote the closure of the sum of highest weight vectors $v_i$ of $V_i$ in $\mathbb P(\oplus_{i=1}^r V_i)$. For example, $\mathcal H_{L}(V)$ for an irreducible representation space $V$ is the highest weight orbit, and $(L, \alpha_i, \alpha_j)$ is $\mathcal H_{L}(V_{\varpi_i} \oplus V_{\varpi_j})$, where $V_{\varpi_i}$ (respectively, $V_{\varpi_j}$) is the irreducible representation of $L$ of highest weight $\varpi_i$ (respectively, $ \varpi_j$).

 Let $L_0$ be the subgroup of $G=\Aut(X)$ with Lie algebra $\mathfrak l_0$. We need the descriptions of the representation $U_{-1}\oplus \mathfrak l_{-1}$ of $L_0$ and $\mathcal H_{L_0}( U_{-1}\oplus \mathfrak l_{-1})$ for $(B_m, \alpha_{m-1}, \alpha_{m})$, $(B_3, \alpha_{1}, \alpha_{3})$, $(C_m, \alpha_{m}, \alpha_{m-1})$, $(F_{4}, \alpha_{2}, \alpha_{3})$ and $(G_{2}, \alpha_{2}, \alpha_{1})$ and also the representation $U_{-1} \oplus \frak l_{-1} \oplus \frak l_{-2}$ of $L_0$ and $\mathcal H_{L_0}(U_{-1} \oplus \frak l_{-1} \oplus \frak l_{-2} )$ for $(C_m, \alpha_{i+1}, \alpha_{i})$, $1 < i+1 < m$. The following are parts of Lemma 3.5.1 and Proposition 3.5.2 of \cite{Ki0}.

\begin{lemma}\label{lem:type_descrip}
\begin{enumerate}

\item[(1)] $(B_m, \alpha_{m-1}, \alpha_{m})$, $m>2$ where $U=V_{\varpi_m}$; Denote $L_0=A_1\times A_{m-2}$. Let $V$ be the standard representation of $A_1$ and $W^*$ be the standard representation of $A_{m-2}$. Then
\begin{eqnarray*}
\mathfrak l_{-1}&=& \Sym^2 V \otimes W \\
U_{-1}&=& V.
\end{eqnarray*}
The closure  $\mathcal H_{L_0}( U_{-1}\oplus \mathfrak l_{-1})$ of the $L_0$-orbit at $v + v^2\otimes w$ is
\begin{eqnarray*}
 \mathcal H_{L_0}( U_{-1}\oplus \mathfrak l_{-1})&=&\mathbb P \{c v + v^2 \otimes w: c \in \mathbb C, v \in V, w \in W\} \\
 &\simeq& \mathbb P (\mathcal O_{\mathbb P(V)}(-1) \oplus \mathcal O_{\mathbb P(V)}(-2)^{m-1}),
\end{eqnarray*}
where $\dim V=2$ and $ \dim W=m-1$.

\item[(2)]$(B_3, \alpha_{1}, \alpha_{3})$ where $U=V_{\varpi_3}$; Denote $L_0=B_2$. Let $V$ be the spin representation of $B_2$. Let $W$ be the standard representation of $B_2$. Then
\begin{eqnarray*}
\mathfrak l_{-1}&=& W\\
U_{-1}&=& V.
\end{eqnarray*}
The closure $\mathcal H_{L_0}( U_{-1}\oplus \mathfrak l_{-1})$ of the $L_0$-orbit at $v+w$ is the horospherical variety of type $(C_2, \alpha_2, \alpha_1)$, the odd symplective Grassmannian $\Gr_w(2,\mathbb C^5)$ of isotropic $2$-subspaces in $\mathbb C^5$.

\item[$ {\rm (3)_{i+1}}$] $(C_m, \alpha_{i+1}, \alpha_{i})$, $1 < i+1 < m$ where $U=V_{\varpi_1}$; Denote $L_0=A_{i}\times C_{m-i-1}$. Let $V^*$ be the standard representation of $A_{i}$, let $Q^*$ be the standard representation of $C_{m-i-1}$ and $W:= \mathbb C \oplus Q$. Then
\begin{eqnarray*}
\mathfrak l_{-2}&=& \Sym^2V \\
U_{-1} \oplus\mathfrak l_{-1} &=& V \otimes W.
\end{eqnarray*}

The closure $\mathcal H_{L_0}(U_{-1} \oplus \frak l_{-1} \oplus \frak l_{-2}  )$ of the  $L_0$-orbit at $v\otimes w + v^2 $ is
\begin{eqnarray*}
\mathcal H_{L_0}(U_{-1} \oplus \frak l_{-1} \oplus \frak l_{-2} ) &=& \mathbb P \{c v \otimes w + v^2 : c \in \mathbb C, v \in V, w \in W \}\\
& \simeq &\mathbb P (\mathcal O_{\mathbb P(V)}(-1)^{2m-2i-1} \oplus \mathcal O_{\mathbb P(V)}(-2)),
\end{eqnarray*}
where $\dim V=i+1$ and $\dim W=2m-2i-1$.

\item[${\rm (3)_{m}}$] $(C_m, \alpha_{m}, \alpha_{m-1})$ where $U=V_{\varpi_1}$; Denote $L_0=A_{m-1}$. Let $V^*$ be the standard representation of $A_{m-1}$.
Then
\begin{eqnarray*}
\mathfrak l_{-1}&=& \Sym^2V \\
U_{-1}&=& V.
\end{eqnarray*}
The closure  $\mathcal H_{L_0}( U_{-1}\oplus \mathfrak l_{-1})$ of the $L_0$-orbit at $v+ v^2$ is
 \begin{eqnarray*}
\mathcal H_{L_0}( U_{-1}\oplus \mathfrak l_{-1})&=& \mathbb P \{c v + v^2 : c \in \mathbb C, v \in V\} \\
&\simeq& \mathbb P (\mathcal O_{\mathbb P(V)}(-1) \oplus \mathcal O_{\mathbb P(V)}(-2)),
\end{eqnarray*}
where $\dim V=m$.

\item[(4)] $(F_{4}, \alpha_{2}, \alpha_{3})$ where   $U=V_{\varpi_4}$; Denote $L_0=A_{1}\times A_2$. Let $V$ be the standard representation of $A_{1}$ and $V^*=V$ and let $W^*$ be the standard representation of $A_{2}$. Then
\begin{eqnarray*}
\mathfrak l_{-1}&=& \Sym^2V\otimes W \\
U_{-1}&=&V.
\end{eqnarray*}
The closure   $\mathcal H_{L_0}( U_{-1}\oplus \mathfrak l_{-1})$ of the $L_0$-orbit at $v+ v^2\otimes w$ is
\begin{eqnarray*}
\mathcal H_{L_0}( U_{-1}\oplus \mathfrak l_{-1})&=& \mathbb P \{c v + v^2 \otimes w: c \in \mathbb C, v \in V, w \in W\} \\
&\simeq& \mathbb P (\mathcal O_{\mathbb P(V)}(-1) \oplus \mathcal O_{\mathbb P(V)}(-2)^{2}),
\end{eqnarray*}
where $\dim V=3$ and $ \dim W= 2$.

\item[(5)] $(G_{2}, \alpha_{2}, \alpha_{1})$ where $U=V_{\varpi_1}$; Denote $L_0=A_{1}$. Let $V$ be the standard representation of $A_{1}$ such that $V^*=V$. Then
\begin{eqnarray*}
\mathfrak l_{-1}&=& \Sym^3V \\
U_{-1}&=& V.
\end{eqnarray*}
The closure $\mathcal H_{L_0}( U_{-1}\oplus \mathfrak l_{-1})$ of the  $L_0$-orbit at $v+ v^3$ is
\begin{eqnarray*}
\mathcal H_{L_0}( U_{-1}\oplus \mathfrak l_{-1})&=& \mathbb P \{c v + v^3 : c \in \mathbb C, v \in V\} \\
&\simeq& \mathbb P (\mathcal O_{\mathbb P(V)}(-1) \oplus \mathcal O_{\mathbb P(V)}(-3)),
\end{eqnarray*}
where $\dim V=2$.
\end{enumerate}

\end{lemma}

\begin{proposition} [Proposition 3.5.2 of \cite{Ki0}]\label{vmrt of X} Let $X$ be a smooth nonhomogeneous projective horospherical variety $(L, \alpha, \beta)$ of Picard number one.
 Let $\mathcal C_o(X) \subset \mathbb P (T_oX) $ denote  the variety of minimal rational tangents of $X$  at the base point $o$. Then
 \begin{eqnarray*}
 \mathcal C_o(X) = \left\{ \begin{array}{l}
 \mathcal H_{L_0}( U_{-1} \oplus\mathfrak l_{-1} \oplus \mathfrak l_{-2} ) \text{ if } X \text{ is }  (C_m, \alpha_{i+1}, \alpha_{i}) \text{ for }  1 \leq i < m \\
 \mathcal H_{L_0}(  U_{-1}\oplus \mathfrak l_{-1}), \text{ otherwise. }
 \end{array}\right.
   \end{eqnarray*}
\end{proposition}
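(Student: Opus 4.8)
The plan is to use the homogeneity of the open orbit to reduce the statement to the identification of a single $L_0$-orbit closure, and then to pin that orbit down by exhibiting one minimal rational curve through $o$ and reading off its tangent direction. By Theorem~\ref{Horospheical} the point $o$ lies in the open $G$-orbit $X^0$, where $G=\Aut(X)$, and $X^0$ is a single $G$-orbit; hence $\mathcal C_o(X)$ is the variety of minimal rational tangents at a general point. The stabilizer of $o$ in $G$ acts on $T_oX$ through its linear isotropy representation, and by Proposition~\ref{tangentsp of X} this identifies $T_oX$ with $\mathfrak m=\bigoplus_{p<0}\mathfrak g_p$ as a graded module, with the reductive subgroup $L_0$ acting so as to preserve the grading $\mathfrak g_{-1}=\mathfrak l_{-1}\oplus U_{-1}$ and $\mathfrak g_{-2}=\mathfrak l_{-2}$ (recall $U_p=0$ for $p<-1$). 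Since the isotropy permutes the minimal rational curves through $o$, it preserves $\mathcal C_o(X)$, so $\mathcal C_o(X)$ is an $L_0$-invariant subvariety of $\mathbb P(T_oX)$. It therefore suffices to determine the tangent direction at $o$ of one minimal rational curve and to show that its $L_0$-orbit closure is the asserted $\mathcal H_{L_0}(\cdots)$.

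Next I would produce an explicit minimal rational curve. Realize $X=\mathcal H_L(V_{\varpi_\alpha}\oplus V_{\varpi_\beta})$ inside $\mathbb P(V_{\varpi_\alpha}\oplus V_{\varpi_\beta})$ under the ample generator of $\operatorname{Pic}(X)$, so that $o=[v_\alpha]$ for a highest weight vector $v_\alpha$. A minimal rational curve through $o$ can be presented as the closure of an orbit $t\mapsto \exp(tZ)\cdot o$ of a nilpotent element $Z\in\mathfrak m$ with $Zv_\alpha\neq 0$; under the identification $T_oX\cong\mathfrak m$ its tangent direction at $o$ is $[Z]$, and the degree of the curve is governed by the order of nilpotency of $Z$ acting on $V_{\varpi_\alpha}\oplus V_{\varpi_\beta}$. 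I would take $Z$ to be the sum of the highest weight vectors of the irreducible $L_0$-summands of $\mathfrak g_{-1}=\mathfrak l_{-1}\oplus U_{-1}$, augmented by the highest weight vector of $\mathfrak l_{-2}=\mathfrak g_{-2}$ in the type $C_m$ case, and verify that the resulting orbit is a genuine member of the minimal rational component.

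Finally I would identify the orbit. Each of $\mathfrak l_{-1}$, $U_{-1}$, and $\mathfrak l_{-2}$ is an irreducible $L_0$-module, so by definition $\mathcal H_{L_0}(U_{-1}\oplus\mathfrak l_{-1})$, respectively $\mathcal H_{L_0}(U_{-1}\oplus\mathfrak l_{-1}\oplus\mathfrak l_{-2})$, is the closure of the $L_0$-orbit of the sum of the corresponding highest weight vectors. Matching this with the direction $[Z]$ of the previous step and invoking the reduction of the first step yields the claimed equality. One should note that the degenerations of this orbit recover the tangent directions of the lines lying in $Y=L/P_Y$ (the pure $\mathfrak l_{-1}$ directions) as boundary points, which is consistent with $Y\subset X^0$.

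The main obstacle is the second step: showing that the tangent direction of the minimal rational curve is exactly the indicated sum of highest weight vectors and, above all, determining how far into the negative grading it extends. The dichotomy in the statement — that the summand $\mathfrak l_{-2}$ occurs for type $C_m$ but not for types $B_m$ or $F_4$, even though $\mathfrak l_{-2}$ is nonzero in several of these gradings — cannot be read off the symbol algebra $\mathfrak m$ alone, and must come from a type-by-type analysis of the $\mathfrak{sl}_2$-weight strings of $Z$ acting on $V_{\varpi_\alpha}\oplus V_{\varpi_\beta}$, equivalently of how the quadratic equations cutting out $X$ constrain the first- and second-order jets of the curve. I expect the symplectic geometry underlying type $C_m$ to be precisely what forces the extra $\mathfrak g_{-2}$-component of $Z$, and this is where the bulk of the case-by-case computation lies.
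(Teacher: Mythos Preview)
Your strategy is plausible in outline but differs substantially from the paper's, and the gap you yourself flag in the second step is precisely what the paper's argument circumvents. There is also a second gap you do not flag: even granting that the curve $t\mapsto\exp(tZ)\cdot o$ is minimal with tangent $[Z]$, your reduction in the first step only yields that $\mathcal C_o(X)$ is $L_0$-invariant and hence \emph{contains} $\mathcal H_{L_0}(\cdots)$; you give no argument for the reverse inclusion.

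The paper's proof avoids both issues. First, rather than constructing a minimal curve from scratch, it uses the inclusion $\mathcal C_o(Y)\subset\mathcal C_o(X)$ coming from $Y=L/P_Y\subset X^0$, together with the known description $\mathcal C_o(Y)=\mathcal H_{L_0}(\mathfrak l_{-1})$ (resp.\ $\mathcal H_{L_0}(\mathfrak l_{-1}\oplus\mathfrak l_{-2})$) when $\alpha$ is a long (resp.\ short) simple root. This is the true source of the $C_m$ dichotomy: the extra $\mathfrak l_{-2}$ appears exactly because $\alpha_{i+1}$ is short for $C_m$ with $i+1<m$, not from any special symplectic weight-string analysis. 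Second, the paper uses more of the isotropy than $L_0$: since $U_0$ lies in the stabilizer of $o$ and $[U_0,\mathfrak l_{-1}]\subset U_{-1}$, the $U_0$-action on the known points $\mathcal H_{L_0}(\mathfrak l_{-1})\subset\mathcal C_o(X)$ forces the highest weight vector of $U_{-1}$ into $\mathcal C_o(X)$, whence $\mathcal H_{L_0}(U_{-1}\oplus\mathfrak l_{-1})\subset\mathcal C_o(X)$. Third, the reverse inclusion comes from a dimension count: $\dim\mathcal C_o(X)\le K_X^{-1}\cdot C_\alpha-2$, and one checks type by type that this bound equals $\dim\mathcal H_{L_0}(U_{-1}\oplus\mathfrak l_{-1})$ (resp.\ with $\mathfrak l_{-2}$ added). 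Thus no explicit minimal curve needs to be verified, and the only case-by-case work is the straightforward computation of $K_X^{-1}\cdot C_\alpha$ and of the dimension of the highest weight orbit closure.
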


\begin{proof} We note that the variety $\mathcal C_o(Y)$ of minimal rational tangents of $Y$ at $o$ is contained in the variety $\mathcal C_o(X)$ of minimal rational tangents of $X$ at $o$.

Assume that $X$ is not $(C_m, \alpha_{i+1}, \alpha_{i}), m>2, i=1,\ldots, m-2$. Since $Y=L/P_Y$ is associated with a long simple root $\alpha$, by Proposition 1 of Hwang-Mok (\cite{HM02}), the variety $\mathcal C_o(Y)$ of minimal rational tangents of $Y$ is  $\mathcal H_{L_0}(\mathfrak l_{-1})$. Furthermore, $L_0 \rhd U_{0}$ acts invariantly on $\mathcal C_0(X)$. From $[U_{0}, \frak l_{-1}] \subset U_{-1}$, it follows that the highest weight vector of $U_{-1}$ is contained in $\mathcal C_o(X)$. Therefore,  $\mathcal H_{L_0}( U_{-1}\oplus \mathfrak l_{-1})$ is contained in  $\mathcal C_o(X)$ so that $\dim \mathcal H_{L_0}( U_{-1}\oplus \mathfrak l_{-1})$ is less than or equal to $\dim \mathcal C_o(X)$. However, $\dim \mathcal C_o(X)$ cannot exceed $\dim H^0(C_{\alpha}, N_{C_{\alpha}|X} (-1))$ which is equal to   $K^{-1}_X \cdot C_{\alpha}-2$. % of $X$.
Now by comparing the dimension of $\mathcal H_{L_0}( U_{-1}\oplus \mathfrak l_{-1})$ with   $K^{-1}_X\cdot C_{\alpha} -2$, we get the desired results.
\begin{center}
    \begin{tabular}{ c c  c  }
types & $K^{-1}_X\cdot C_{\alpha}-2$ \\
\hline
$(B_m, \alpha_{m-1}, \alpha_{m})$ & $m$ \\
$(B_3, \alpha_{1}, \alpha_{3})$ & 5 \\
$(C_m, \alpha_{m}, \alpha_{m-1})$ & $m$ \\
$(C_m, \alpha_{i+1}, \alpha_{i})_{ 1 < i+1 < m}$ & $2m-i-1$ \\
$(F_{4}, \alpha_{2}, \alpha_{3})$ & 4\\
$(G_{2}, \alpha_{2}, \alpha_{1})$ & 2 \\
    \end{tabular}
\end{center}
Here, we use the description of $\mathcal H_{L_0}( U_{-1}\oplus \mathfrak l_{-1})$ in Lemma \ref{lem:type_descrip}.

\medskip
Assume that  $X$ is  $(C_m, \alpha_{i+1}, \alpha_{i}), 1 < i+1 < m$. Then we have
$\mathcal C_o(Y)=\mathcal H_{L_0}(\frak l_{-1} \oplus \frak l_{-2})$. By a similar argument, after replacing $\mathcal H_{L_0}(\frak l_{-1})$ by $\mathcal H_{L_0}(\frak l_{-1} \oplus \frak l_{-2})$, we get the desired result.
\end{proof}

\section{Projective geometry of varieties of minimal rational tangents}
\label{sect:horospherical BF type}

In this section, let $X$ be  either $(B_m, \alpha_{m-1}, \alpha_{m})$, where $m \geq 3$, or $(F_4, \alpha_2, \alpha_3)$.
Let $\mathfrak g = \bigoplus_{p \in \mathbb Z} \mathfrak g_p $ be the Lie algebra of  $\Aut(X)$ with the  gradation given as in Proposition \ref{tangentsp of X} and  $\mathfrak m:= \bigoplus _{p <0} \mathfrak g_p$ be its negative part. As in Lemma \ref{lem:type_descrip}, let $V$ and $W$ be the vector spaces with $(\dim V, \dim W)=(2, m-1)$ for $(B_m, \alpha_{m-1}, \alpha_{m})$ and $(\dim V, \dim W)=(3, 2)$ for $(F_4, \alpha_2, \alpha_3)$, and set $${\bf U}:= V \oplus (\Sym^2(V) \otimes W).$$ Then $\bf U$ can be identified with $\frak g_{-1}$. Let
$${\bf S}=\mathbb P \{ v + v^2 \otimes w: v \in V, w \in W\} \subset \mathbb P\bf U $$

be the variety of minimal rational tangents of $X$ at the base point as in Proposition \ref{vmrt of X}.

\subsection{Projective geometries of VMRTs}

We will show that the variety ${{\bf S}}  \subset \mathbb P \bf U$   of minimal rational tangents of $X$ at the base point satisfies the conditions in Proposition \ref{parallel transport of III}.

\begin{lemma} \label{parallel transport of III-X case} %$\,$  %Let $s=\dim \frak g_{-1} -(1+p+r)$.
 Let $s$ be  the codimension of $T^{(2)}\widehat{\bf S}$ in ${\bf U}=\frak g_{-1}$.

  \begin{enumerate}
  \item The third fundamental form of ${\bf S} \subset \mathbb P(\bf U)$ is surjective;
  \item The dimension of $  [\frak g_{-\alpha}, \frak g_{-1}]   $ is   $  s$, where $\alpha$ is the simple root which gives the gradation on $\frak g$.
\end{enumerate}
\end{lemma}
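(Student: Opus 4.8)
The plan is to compute the osculating filtration of the affine cone
$\widehat{\bf S}=\{cv+v^2\otimes w:c\in\mathbb C,\,v\in V,\,w\in W\}\subset{\bf U}$
directly from its natural parametrization, and to read off both statements from the resulting explicit subspaces of ${\bf U}=V\oplus(\Sym^2 V\otimes W)$. Using the $\mathbb C^{*}$-action on the cone I would normalize a general smooth point to $p_0=v_0+v_0^2\otimes w_0$ with $v_0\in V$, $w_0\in W$ generic, and expand the parametrization at $(c,v,w)=(1+s,\,v_0+u,\,w_0+x)$. Writing $v_0u\in\Sym^2 V$ for the symmetric product, this gives
\begin{align*}
\gamma(s,u,x)&=\underbrace{\,v_0+v_0^2\otimes w_0\,}_{\text{order }0}
+\underbrace{\,u+s\,v_0+v_0^2\otimes x+2\,v_0u\otimes w_0\,}_{\text{order }1}\\
&\quad+\underbrace{\,s\,u+2\,v_0u\otimes x+u^2\otimes w_0\,}_{\text{order }2}
+\underbrace{\,u^2\otimes x\,}_{\text{order }3}.
\end{align*}
The affine tangent space $\widehat T_{p_0}\widehat{\bf S}$, the second osculating space $\widehat T^{(2)}_{p_0}\widehat{\bf S}$, and the third osculating space $\widehat T^{(3)}_{p_0}\widehat{\bf S}$ are then the spans of the homogeneous coefficients of order $\le1$, $\le2$, and $\le3$, respectively.

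A direct inspection of the displayed terms yields
$\widehat T^{(2)}_{p_0}\widehat{\bf S}=V\oplus\big(v_0V\otimes W+\Sym^2 V\otimes\langle w_0\rangle\big)$, where $v_0V=\{v_0u:u\in V\}$, so that
\[
{\bf U}/\widehat T^{(2)}_{p_0}\widehat{\bf S}\;\cong\;\big(\Sym^2 V/v_0V\big)\otimes\big(W/\langle w_0\rangle\big),
\]
whence $s=\binom{\dim V}{2}(\dim W-1)$, which equals $m-2$ for $(B_m,\alpha_{m-1},\alpha_m)$ and $3$ for $(F_4,\alpha_2,\alpha_3)$. For (1), recall from the definition of the higher fundamental forms that the image of $III$ is $\widehat z^{*}\otimes(\widehat T^{(3)}/\widehat T^{(2)})\subset N/N^{(2)}=\widehat z^{*}\otimes({\bf U}/\widehat T^{(2)})$, so surjectivity of $III$ is equivalent to $\widehat T^{(3)}_{p_0}\widehat{\bf S}={\bf U}$. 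Since the order-$3$ coefficients $u^2\otimes x$ span all of $\Sym^2 V\otimes W$ as $(u,x)$ vary, while $V$ already lies in $\widehat T^{(2)}$, we obtain $\widehat T^{(3)}_{p_0}\widehat{\bf S}=V\oplus(\Sym^2 V\otimes W)={\bf U}$, proving (1).

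For (2), I would first use the structure $\mathfrak g=(\mathfrak l\oplus\mathbb C)\ltimes U$ with $U$ an abelian ideal, $[\mathfrak l,U]\subset U$, and $U_{-2}=0$: the bracket annihilates $U_{-1}=V$, so $[\mathfrak g_{-\alpha},\mathfrak g_{-1}]=[\mathfrak l_{-\alpha},\mathfrak l_{-1}]\subseteq\mathfrak l_{-2}$ and the computation moves inside the semisimple algebra $\mathfrak l$. Writing $e_{-\alpha}=X_{-\alpha}$ for the root vector and using that $\mathfrak l_{-1}$ is the span of the root vectors $X_\beta$ with $\beta$ of degree $-1$, the image $[X_{-\alpha},\mathfrak l_{-1}]$ is spanned by the $X_{\beta-\alpha}$ with $\beta-\alpha$ a root; hence $\dim[\mathfrak g_{-\alpha},\mathfrak g_{-1}]$ equals the number of degree $-1$ roots $\beta$ for which $\beta-\alpha$ is again a root. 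A short bookkeeping in the $\alpha$-grading then evaluates this count: for $B_m$ with $\alpha=\alpha_{m-1}$ the contributing roots are $\beta=-e_i-e_m$, $1\le i\le m-2$, giving $m-2$; for $F_4$ with $\alpha=\alpha_2$ one checks the count is $3$. In both cases it equals $s$, proving (2).

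The delicate point is the root-theoretic count in (2), especially for $F_4$, where one must list all degree $-2$ roots in the $\alpha_2$-grading and test which remain roots after adding $\alpha_2$; a sign or combinatorial slip is the main risk. One should also record, for the later application of Proposition \ref{parallel transport of III}, that the distinguished direction $\mathcal O(2)_x$ there is the tangent to a line in $Y=L/P_Y\subset X^{0}$, i.e. the lowest root vector $e_{-\alpha}$ spanning $\mathfrak g_{-\alpha}$, so that (1) and (2) are exactly the two hypotheses of that proposition.
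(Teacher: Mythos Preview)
Your proposal is correct and follows essentially the same approach as the paper. For (1) the paper computes $II_\beta$ and $III_\beta$ explicitly as multilinear maps at $\beta=v+v^2\otimes w$ and reads off the same osculating spaces you obtain from the Taylor expansion; for (2) both arguments reduce to the identical root count $\#\{\gamma:\mathfrak g_{-\gamma}\subset\mathfrak l_{-1},\ \alpha+\gamma\text{ a root}\}$, yielding $m-2$ and $3$ respectively. The only cosmetic difference is that the paper records the explicit formulas for $II_\beta$ (in particular its base locus), which are reused later in Lemma~\ref{lem:finer exact sequences} and Proposition~\ref{parallel transport of vmrt}.
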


\begin{proof}
 (1) The tangent space $T_{\beta} \widehat{\bf S}$ at $\beta = v + v^2 \otimes w \in \widehat{\bf S} $ is given by
$$T_{\beta}\widehat{{\bf S}} = \{ v' + 2 v \circ v' \otimes w + v^2 \otimes w': v' \in V, w' \in W\}.$$
The second fundamental form $II_{\beta} : \Sym^2 T_{\beta}\widehat{{\bf S}} \rightarrow {\bf U} /T_{\beta}\widehat{{\bf S}}$ is
\begin{eqnarray*}
&& II_{\beta}(v' + 2v \circ v' \otimes w, v'' + 2 v\circ v'' \otimes w) = 2v' \circ v'' \otimes w \\
&& II_{\beta}(v' + 2v \circ v' \otimes w, v^2 \otimes w') = 2 v \circ v' \otimes w' \\
&& II_{\beta}(v^2 \otimes w', v^2 \otimes w'')=0
\end{eqnarray*}
where $v',v'' \in V$ and $w', w'' \in W$. The third fundamental form $III_{\beta} : \Sym^3 T_{\beta}\widehat{{\bf S}} \rightarrow {\bf U} /T_{\beta}^{(2)}\widehat{{\bf S}}$ is zero except
$$III_{\beta}(v' + 2 v\circ v' \otimes w, v'' + 2 v \circ v'' \otimes w, v^2 \otimes w') = 2 v' \circ v'' \otimes w' \mod T_{\beta}^{(2)}\widehat{{\bf S}}, $$
where $v',v'' \in V$ and $w' \in W$.
Thus the third osculating space is the whole space $\bf U$.

(2) The dimension $r$ of the image of $II_{\beta}$ is $m$ in the first case, and is $7$ in the second case. $(\dim{\bf{U}}, \dim\widehat{{\bf S}})=(\dim D_x, p+1)$ is $(2+3(m-1)=3m-1,m+1)$ in the first case, and is $(15, 5)$ in the second case. Thus $s=\dim {\frak g_{-1}} -(1+p+r)$ is $ m-2 $ in the first case, and is $ 3$ in the second case.

On the other hand, the minimum of $\dim [\xi, D_x]$ occurs when $\xi \in \frak g _{-\alpha}$ where $\alpha$ is the simple root which gives the gradation on $\frak g$. Furthermore, the dimension of $[\xi , D_x]=[\xi , \mathfrak l_{-1}]$  is equal to the number of roots $\gamma$ with $\mathfrak g_{-\gamma} \subset  \mathfrak l_{-1}$ such that $\alpha + \gamma$ is again a root. One can compute directly that this number is  $m-2 $ in the first case, and is $3$ in the second case.
\end{proof}

From the exact sequence of $G_0$-modules where $G_0$ is the subgroup of $G=\Aut(X)$ with Lie algebra $\frak g_0$
$$0 \rightarrow V \rightarrow {\bf U} \rightarrow \Sym^2V \otimes W \rightarrow 0, $$
we get the following exact sequences.

\begin{lemma} \label{lem:finer exact sequences} Let $\beta=v + v^2 \otimes w $ be an element of  $ \widehat{\bf S}$, where $v \not=0 \in V$ and $w\not=0 \in W$. Denote by $V_0$ the subspace of $V$ generated by $v$ and by $W_0$ the subspace of $W$ generated by $w$. Then
we have the following exact sequences.
\begin{eqnarray*}
0 \rightarrow &\mathbb C \beta& \rightarrow \Sym^2 V_0 \otimes W_0 \rightarrow 0 \\
0 \rightarrow V_0 \rightarrow &T_{\beta}{\widehat{\bf S}}/\mathbb C \beta& \rightarrow \left(V_0 \circ(V/V_0) \otimes W_0\right) \oplus \left( \Sym^2 V_0 \otimes(W/W_0)\right) \rightarrow 0 \\
0 \rightarrow V/V_0 \rightarrow &T^{(2)}_{\beta}\widehat{\bf S}/T_{\beta}\widehat{\bf S}& \rightarrow \left(\Sym^2(V/V_0) \otimes W_0\right) \oplus \left( V_0 \circ (V/V_0) \right) \otimes(W/W_0) \rightarrow 0 \\
0 \rightarrow &{\bf U}/T^{(2)}_{\beta}\widehat{\bf S}& \rightarrow \Sym^2(V/V_0) \otimes (W/W_0) \rightarrow 0
\end{eqnarray*}
\end{lemma}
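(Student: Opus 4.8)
The plan is to realize all four sequences as the successive graded pieces of a single short exact sequence construction, built from the second osculating filtration of $\widehat{\bf S}$ at $\beta$ together with the $G_0$-equivariant projection
$$\pi \colon {\bf U} = V \oplus (\Sym^2 V \otimes W) \longrightarrow \Sym^2 V \otimes W, \qquad \Ker \pi = V.$$
Set $F_0 = \mathbb C\beta \subset F_1 = T_\beta \widehat{\bf S} \subset F_2 = T^{(2)}_\beta \widehat{\bf S} \subset F_3 = {\bf U}$. For any pair $F' \subset F$ of subspaces of ${\bf U}$, the projection $\pi$ induces a short exact sequence
$$0 \longrightarrow (F \cap V)/(F' \cap V) \longrightarrow F/F' \stackrel{\pi}{\longrightarrow} \pi(F)/\pi(F') \longrightarrow 0,$$
where the identification of the kernel uses $\Ker \pi = V$: an element of $F$ whose $\pi$-image already lies in $\pi(F')$ differs from an element of $F'$ by a vector in $F \cap V$. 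Applying this to the three consecutive steps of the osculating filtration (and to $F_0$ itself, read as the case $F' = 0$) will produce the four claimed sequences once the terms $F_i \cap V$ and $\pi(F_i)$ have been computed.

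The first substantive step is to pin down $T^{(2)}_\beta \widehat{\bf S}$. From the second fundamental form computed in Lemma \ref{parallel transport of III-X case}, the image of $II_\beta$ is spanned, modulo $T_\beta \widehat{\bf S}$, by the vectors $v' \circ v'' \otimes w$ and $v \circ v' \otimes w'$, so that
$$T^{(2)}_\beta \widehat{\bf S} = T_\beta \widehat{\bf S} + (\Sym^2 V \otimes W_0) + (V_0 \circ V \otimes W).$$
The key consequence I would extract is that $V \subset T^{(2)}_\beta \widehat{\bf S}$: subtracting the two $\Sym^2 V \otimes W$-terms $2 v \circ v' \otimes w$ and $v^2 \otimes w'$, which now lie in $T^{(2)}_\beta\widehat{\bf S}$, from a general tangent vector $v' + 2 v \circ v' \otimes w + v^2 \otimes w' \in T_\beta\widehat{\bf S}$ leaves $v'$, and $v'$ ranges over all of $V$.

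It then remains to compute, using the explicit form of $T_\beta\widehat{\bf S}$ and the decompositions $\Sym^2 V = \Sym^2 V_0 \oplus (V_0 \circ (V/V_0)) \oplus \Sym^2(V/V_0)$ and $W = W_0 \oplus (W/W_0)$, the intersections
$$\mathbb C\beta \cap V = 0, \qquad T_\beta\widehat{\bf S} \cap V = V_0, \qquad T^{(2)}_\beta\widehat{\bf S} \cap V = V = {\bf U} \cap V,$$
together with the projections $\pi(\mathbb C\beta) = \Sym^2 V_0 \otimes W_0$, $\pi(T_\beta\widehat{\bf S}) = (\Sym^2 V_0 \otimes W) \oplus (V_0 \circ (V/V_0) \otimes W_0)$, and $\pi(T^{(2)}_\beta\widehat{\bf S}) = (\Sym^2 V_0 \otimes W) \oplus (V_0 \circ (V/V_0) \otimes W) \oplus (\Sym^2(V/V_0) \otimes W_0)$, with $\pi({\bf U}) = \Sym^2 V \otimes W$. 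Feeding the successive quotients of these into the induced sequence above yields precisely the four stated sequences. I expect the only genuine obstacle to be the bookkeeping in $\pi(T^{(2)}_\beta\widehat{\bf S})$ and the verification $V \subset T^{(2)}_\beta\widehat{\bf S}$; every remaining step is formal. As a consistency check I would compare the dimensions of the computed graded pieces against the values $r$ (the rank of $II_\beta$) and $s$ (the codimension of $T^{(2)}\widehat{\bf S}$) recorded in Lemma \ref{parallel transport of III-X case}.
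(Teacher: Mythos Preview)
Your proposal is correct and follows essentially the same approach as the paper: both use the projection $\pi\colon {\bf U}\to \Sym^2 V\otimes W$ coming from the exact sequence $0\to V\to {\bf U}\to \Sym^2 V\otimes W\to 0$, compute $F_i\cap V$ and $\pi(F_i)$ for each step $F_i$ of the osculating filtration (the paper records these as three exact sequences for $\mathbb C\beta$, $T_\beta\widehat{\bf S}$, $T^{(2)}_\beta\widehat{\bf S}$ and notes the overlaps in the non-direct sums), and then passes to successive quotients. Your framing via the general induced sequence for $F'\subset F$ is slightly more systematic, but the content is identical.
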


\begin{proof} From the proof of Lemma \ref{parallel transport of III-X case}, we get the following exact sequences.
\begin{eqnarray*}
0 \rightarrow &\mathbb C \beta& \rightarrow \Sym^2 V_0 \otimes W_0 \rightarrow 0 \\
0 \rightarrow V_0 \rightarrow &T _{\beta}\widehat{\bf S}& \rightarrow V_0 \circ V \otimes W_0 + \Sym^2V_0 \otimes W \rightarrow 0 \\
0 \rightarrow V \rightarrow  &T^{(2)}_{\beta}\widehat{\bf S}& \rightarrow \Sym^2V \otimes W_0 + V_0 \circ V \otimes W \rightarrow 0.
\end{eqnarray*}
Here, we remark that $V_0 \circ V \otimes W_0 + \Sym^2V_0 \otimes W$ and $\Sym^2V \otimes W_0 + V_0 \circ V \otimes W$ are not direct sums:

\begin{eqnarray*}
(V_0 \circ V \otimes W_0)  \cap  (\Sym^2V_0 \otimes W)&=&\Sym^2V_0 \otimes W_0 \\
(\Sym^2V \otimes W_0) \cap ( V_0 \circ V \otimes W)&=& V_0 \circ V \otimes  W_0.
\end{eqnarray*}
By taking the quotients we get the desired exact sequences.
  \end{proof}

\begin{remark}\label{pqrst}
Let $p=\dim {\bf S}$, $q=\dim {\bf Q}/  T_{\beta}\widehat{\bf S}$, $r=\dim T^{(2)}_{\beta}\widehat{\bf S}/T_{\beta}\widehat{\bf S}$, $s=\dim {\bf U}/T^{(2)}_{\beta}\widehat{\bf S}$ as above, and $t=q-r-2s$, where ${\bf Q}:= \mathfrak{m}$ as a vector space. Then we have the following table.

\begin{center}
    \begin{tabular}{ r | c c c c c }
 & p & q & r & s & t\\
 \hline
$(B_m, \alpha_{m-1}, \alpha_{m})$ &$m$&$2m-2$&$m$&$m-2$&${(m-2)(m-3)}/{2}$ \\
$(F_{4}, \alpha_{2}, \alpha_{3})$ &$4$&$10$&$7$&$3$&$5$ \\
    \end{tabular}
\end{center}

\end{remark}

\subsection{Fundamental graded Lie algebras determined by VMRTs}    \label{sect: geometric structures modeled after X}

We will show that ${\bf S}$ and $\frak m$ satisfies the conditions in
Proposition \ref{S-str and G_0-str}, so that there is a one-to-one correspondence between $G_0$-structures and ${\bf S}$-structures on filtered manifolds of type $\frak m$.

\begin{proposition} \label{fundamental graded Lie algebra of vmrt}  $\,$
 \begin{enumerate}
\item The graded Lie algebra  $\mathfrak m $ is fundamental and determined by ${\bf S}  \subset \mathbb P \mathfrak g_{-1}$.
\item Let $\frak g(\widehat{\bf S})$ be the Lie algebra of the linear automorphism group   $G(\widehat{\bf S})$.
Then the induced homomorphism $\frak g(\widehat{\bf S}) \rightarrow \frak g_0(\frak m)$ is injective and its image is $\frak g_0$.
\end{enumerate}
 \end{proposition}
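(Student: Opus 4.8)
The plan is to first make $\mathfrak m$ completely explicit and then establish the two assertions in turn. Reading off the grading associated with $\alpha$ from Proposition \ref{tangentsp of X}, one finds that $\mathfrak m$ is two-step in the $B_m$ case, $\mathfrak m=\mathfrak g_{-1}\oplus\mathfrak g_{-2}$ with $\mathfrak g_{-1}={\bf U}=V\oplus(\Sym^2 V\otimes W)$ and $\mathfrak g_{-2}=\mathfrak l_{-2}\cong\wedge^2 W$ (with $\mathfrak{sl}(V)$ acting trivially, since $U_{-2}=0$), and three-step in the $F_4$ case, $\mathfrak m=\mathfrak g_{-1}\oplus\mathfrak g_{-2}\oplus\mathfrak g_{-3}$ with $\dim\mathfrak g_{-2}=6$ and $\dim\mathfrak g_{-3}=2$. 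To prove $\mathfrak m$ is fundamental I would compute the successive Levi brackets: the only nonzero bracket into $\mathfrak g_{-2}$ is carried by the $\Sym^2 V\otimes W$ summand, pairing the two $\Sym^2 V$ factors by the $\mathfrak{sl}(V)$-equivariant product into the relevant isotypic component of $\mathfrak g_{-2}$ and wedging the two $W$ factors; its surjectivity gives $[\mathfrak g_{-1},\mathfrak g_{-1}]=\mathfrak g_{-2}$, and in the $F_4$ case one checks similarly that $[\mathfrak g_{-1},\mathfrak g_{-2}]=\mathfrak g_{-3}$, so $\mathfrak m$ is generated in degree $-1$.

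For ``$\mathfrak m$ is determined by ${\bf S}$'', let $I\subset F({\bf U})$ be the VMRT-ideal and $\pi\colon F({\bf U})\twoheadrightarrow\mathfrak m$ the canonical surjection, which exists because $\mathfrak m$ is generated in degree $-1$. First I would check $I\subseteq\ker\pi$, i.e. that $[v,w]=0$ in $\mathfrak m$ for $v\in\widehat{\bf S}$ and $w\in T_v\widehat{\bf S}$; this is exactly Proposition \ref{kernel of frobenius braket} (the assertion $\alpha\wedge\xi\in\ker[\,,\,]$) applied to the model $X$, whose Levi bracket is the $\mathfrak m$-bracket, so it holds uniformly for both types. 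It may also be seen directly from the explicit bracket: the cross terms die because $w\wedge w=0$, and the diagonal term vanishes because $v^2\cdot v^2$ lies in the complementary summand $\Sym^4 V$ of $\Sym^2(\Sym^2 V)$ and so has zero image in $\mathfrak g_{-2}$. Thus $\pi$ descends to a surjection $F({\bf U})/I\twoheadrightarrow\mathfrak m$, and it remains to show it is an isomorphism, which I would do degree by degree: in degree $-2$ this amounts to $I_{-2}=\ker(\wedge^2{\bf U}\to\mathfrak g_{-2})$, which, the inclusion $\subseteq$ being known, follows from a representation-theoretic count of the codimension of $\span\{\beta\wedge\eta\}$ in $\wedge^2{\bf U}$; in the remaining degrees one matches $(F/I)_{-k}$ with $\mathfrak g_{-k}$, which is zero once $k$ exceeds the nilpotency length ($k\geq 3$ for $B_m$, $k\geq 4$ for $F_4$), and is equivalent to $[{\bf U},I_{-k+1}]$ filling out the whole free-Lie-algebra component in each degree.

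For part (2): an element of $\mathfrak g(\widehat{\bf S})$ is a linear vector field tangent to $\widehat{\bf S}$, and since $\mathfrak m$ is determined by ${\bf S}$, any such $A$ preserves the ideal $I$ and hence extends uniquely to a degree-$0$ derivation of $\mathfrak m=F({\bf U})/I$; this is the homomorphism $\mathfrak g(\widehat{\bf S})\to\mathfrak g_0(\mathfrak m)$. Injectivity is immediate, because a graded derivation is determined by its restriction to the generators $\mathfrak g_{-1}={\bf U}$, so a derivation that is zero forces $A=0$. For the image, the adjoint action of $\mathfrak g_0$ on $\mathfrak g_{-1}=T_oX$ preserves $\widehat{\mathcal C}_o=\widehat{\bf S}$ because $\mathfrak g_0$ is the isotropy, giving an inclusion $\mathfrak g_0\hookrightarrow\mathfrak g(\widehat{\bf S})$ whose composite with $\mathfrak g(\widehat{\bf S})\to\mathfrak g_0(\mathfrak m)$ is the standard inclusion $\mathfrak g_0\subseteq\mathfrak g_0(\mathfrak m)$; thus the whole remaining content is the reverse inclusion $\mathfrak g(\widehat{\bf S})\subseteq\mathfrak g_0$, equivalently $\dim\mathfrak g(\widehat{\bf S})=\dim\mathfrak g_0$.

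I expect this last step, the computation of the full linear symmetry algebra of the cone $\widehat{\bf S}$, to be the main obstacle, together with the degree-by-degree representation theory of part (1). To carry it out I would use that the distinguished linear section $Z=\mathbb P(V)\subset{\bf S}$ (the $\mathcal O(-1)$-section) is intrinsic to the projective geometry of ${\bf S}$, as in Propositions \ref{characterization of vmrt F}--\ref{characterization of vmrt B}, so that every $A\in\mathfrak g(\widehat{\bf S})$ preserves the subspace $V\subset{\bf U}$ and is block upper-triangular with respect to $0\to V\to{\bf U}\to\Sym^2 V\otimes W\to 0$. Tangency to $\widehat{\bf S}=\{v+v^2\otimes w\}$ then constrains the diagonal blocks to the reductive part $\mathfrak{sl}(V)\oplus\mathfrak{sl}(W)$ plus a two-dimensional torus of gradings, and the off-diagonal block $\Sym^2 V\otimes W\to V$ to the image of $\mathrm{ad}(U_0)$; verifying that no further symmetries occur identifies $\mathfrak g(\widehat{\bf S})$ with $(\mathfrak l_0\oplus\mathbb C)\rhd U_0=\mathfrak g_0$ and completes the proof.
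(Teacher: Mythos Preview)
Your strategy is sound and would ultimately work, but it takes a more laborious route than the paper and leaves precisely the hard steps unjustified.

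For part (1), the paper does not attempt the direct degree-by-degree verification you propose. Instead it exploits the decomposition $\mathfrak m=U_{-1}\oplus\mathfrak l_-$ with $[U_{-1},\mathfrak m]=0$: since $U_{-1}$ is central, the entire bracket structure of $\mathfrak m$ lives in $\mathfrak l_-$, and the paper invokes the known result (Proposition~\ref{Vmrt ss-LieAl Str}, from \cite{HM02} and \cite{HH}) that $\mathfrak l_-$ is the fundamental graded Lie algebra determined by the sub-VMRT ${\bf Y}=\mathcal C_o(Y)\subset\mathbb P\mathfrak l_{-1}$. The remaining work is then concentrated in one concrete computation, Lemma~\ref{tangent space of the cone V-2}: the kernel of $\omega\colon\wedge^2{\bf U}\to\mathfrak g_{-2}$ is exactly the span of tangent $2$-planes to $\widehat{\bf S}$. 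That lemma is proved by showing $\Ker\omega=\wedge^2 V\oplus(V\wedge(\Sym^2 V\otimes W))\oplus\Ker\nu$ sits inside the tangent span $\Xi$, using the irreducibility of $\wedge^2{\bf U}/\Ker\omega$ and an explicit curve computation to handle the cross term $V\wedge(\Sym^2 V\otimes W)$. Once $I_{-2}=\Ker\omega$ contains all of $V\wedge{\bf U}$, the higher-degree quotients $(F/I)_{-k}$ automatically coincide with those of $F(\mathfrak l_{-1})/I({\bf Y})$, and Proposition~\ref{Vmrt ss-LieAl Str} finishes the argument. Your proposed ``$[{\bf U},I_{-k+1}]$ fills out the free component'' is exactly what this reduction avoids having to check by hand.

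For part (2), the paper again outsources the hard step: rather than analyzing block structure directly, it observes that ${\bf S}$ is itself a smooth rank-one horospherical $(SL(V)\times SL(W))$-variety and applies Pasquier's method (Lemma~1.1 of \cite{Pa}) to compute $\Aut^0({\bf S})=(SL(V)\times SL(W)\times\mathbb C^*)\ltimes(V^*\otimes W^*)$ from the normal bundles of the two closed orbits (Lemma~\ref{automorphism group of S}). In particular the invariance of $Z$ under $\Aut^0({\bf S})$, which you cite Propositions~\ref{characterization of vmrt F}--\ref{characterization of vmrt B} for, is obtained there from $H^0({\bf Z},N_{{\bf Z}|{\bf S}})=0$ rather than from a projective-geometric characterization. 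Your approach of constraining the blocks by tangency to the cone would reach the same answer, but the horospherical machinery gives it for free.
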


In the remaining part of this subsection we will prove Proposition \ref{fundamental graded Lie algebra of vmrt}. % and Proposition \ref{extension to rationally connected open subset}.
Recall that ${\bf U}$ is given by  $${\bf U} = V \oplus (\Sym^2(V) \otimes W)$$ where $V$ and $W$ are vector spaces with $(\dim V, \dim W)=(2, m-1)$ for $(B_m, \alpha_{m-1}, \alpha_{m})$ and $(\dim V, \dim W)=(3, 2)$ for $(F_4, \alpha_2, \alpha_3)$ and  that    ${{\bf S}}$ is given by
$${\bf S}=\mathbb P \{ v + v^2 \otimes w: v \in V, w \in W\}  \simeq \mathbb P (\mathcal O(-1) \oplus \mathcal O(-2)^{k}) \subset \mathbb P\bf U,$$ where $k=\dim W$.
Let ${\bf G}:=SL(V) \times SL(W)$ be a group acting on $\bf S$. Then, ${\bf S}$ is a smooth horospherical ${\bf G}$-variety of rank one. Moreover,  it has two closed ${\bf G}$-orbits, ${\bf Z}  :=\mathbb P(V)$ corresponding to $\mathbb P(\mathcal O(-1))$ and
\begin{eqnarray*}  {\bf Y}   := \mathbb P \{\lambda^2\otimes\mu \in \Sym^2V\otimes W | \lambda \in V ,
\mu \in W \}\end{eqnarray*}
corresponding to a choice of  $\mathbb P(\mathcal O(-2)^{k})$, has one open ${\bf G}$-orbit the complement of ${\bf Y} \cup {\bf Z}$ in ${\bf S}$.

Denote that ${\bf Y} $ is the variety of minimal rational tangents of $Y=L/P_Y$ that is a rational homogeneous associated with a long simple root of type $(B_m, \alpha_{m-1})$ (respectively, of type $(F_4, \alpha_2)$).

\begin{proposition} [Proposition 1 and Proposition 7 of \cite{HM02}, Proposition 4.1 of \cite{HH}] \label{Vmrt ss-LieAl Str}  Let $Y$ be a rational homogeneous space of type $(\mathfrak l , \alpha)$ where  $\alpha$ is  a long simple root. Let $\mathfrak n=\bigoplus _{p <0} \frak l_p$ be the negative part of the graded Lie algebra  $\frak l=\bigoplus_{p\in \mathbb Z}\frak l_p$ with gradation associated to $\alpha$. Let $D_o \subset T_oY$ be the linear span of the homogeneous cone of the variety $\mathcal C_o$ of minimal rational tangents  at a base point $o \in Y$. Then $\frak n$ is the fundamental graded Lie algebra determined by $ \mathcal C_o \subset \mathbb PD_o$.
\end{proposition}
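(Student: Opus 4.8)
The plan is to exhibit $\mathfrak n$ as a quotient of the free graded Lie algebra $F(\mathfrak l_{-1})$ and to identify the defining ideal with the ideal $I$ generated by the tangency relations $[v,w]=0$, $v\in\widehat{\mathcal C}_o$, $w\in T_v\widehat{\mathcal C}_o$. First I would record the two structural inputs. By Proposition 1 of \cite{HM02} the affine cone $\widehat{\mathcal C}_o$ is the highest weight orbit $\mathcal H_{L_0}(\mathfrak l_{-1})$, which spans $\mathfrak l_{-1}$; hence $D_o=\mathfrak l_{-1}=\mathfrak n_{-1}$. Moreover, since the grading of $\mathfrak l$ is the one associated with the \emph{single} simple root $\alpha$, each $\mathfrak l_{-p}$ is an irreducible $\mathfrak l_0$-module and the brackets $[\mathfrak l_{-1},\mathfrak l_{-p+1}]$ are nonzero, so by irreducibility $[\mathfrak l_{-1},\mathfrak l_{-p+1}]=\mathfrak l_{-p}$ for all $p\geq 2$; thus $\mathfrak n$ is fundamental. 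Consequently there is a canonical surjective homomorphism of graded Lie algebras $\Phi\colon F(\mathfrak l_{-1})\to\mathfrak n$ that is the identity in degree $-1$. Writing $\mathfrak k=\ker\Phi$, a graded ideal with $\mathfrak k_{-1}=0$, it then suffices to prove two claims. \textbf{Claim (a):} the ideal $\mathfrak k$ is generated by its degree $-2$ piece $\mathfrak k_{-2}$. \textbf{Claim (b):} $\mathfrak k_{-2}=I_{-2}$, i.e. the kernel of the bracket $\wedge^2\mathfrak l_{-1}\to\mathfrak l_{-2}$ is spanned by the tangency bivectors $v\wedge w$. Granting both, the relations $I$ actually hold in $\mathfrak n$ so $I\subseteq\mathfrak k$, the two ideals agree in degree $-2$ by (b) and are both generated there by (a); hence $I=\mathfrak k$ and $F(\mathfrak l_{-1})/I\cong\mathfrak n$, which is exactly the assertion that $\mathfrak n$ is determined by ${\bf S}=\mathcal C_o\subset\mathbb P D_o$.

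For Claim (a) I would pass to Lie algebra homology: the minimal generators of $\mathfrak k$ in degree $-p$ are computed by $H_2(\mathfrak n,\mathbb C)_{-p}\cong H^2(\mathfrak n,\mathbb C)^*$ in degree $p$, and $H^2(\mathfrak n,\mathbb C)$ is given by Kostant's theorem as the sum, over the length-two minimal coset representatives $w=s_\alpha s_\beta\in W^{P^\alpha}$ (necessarily with $\beta$ adjacent to $\alpha$ in the Dynkin diagram), of the one-dimensional $\mathfrak l_0$-weight spaces of weight $w\rho-\rho$. A direct computation gives $w\rho-\rho=-\beta-\bigl(1-\langle\beta,\alpha^\vee\rangle\bigr)\alpha$, so the $\alpha$-grading degree of the corresponding relation is $-\bigl(1-\langle\beta,\alpha^\vee\rangle\bigr)$. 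Here the hypothesis that $\alpha$ is \emph{long} enters decisively: for a long $\alpha$ one has $\langle\beta,\alpha^\vee\rangle\in\{0,-1\}$ for every $\beta\neq\alpha$, so each length-two class contributes in degree exactly $-2$ and $H_2(\mathfrak n,\mathbb C)$ is concentrated in degree $-2$. Therefore $\mathfrak k$ has no minimal generators in degrees $\leq-3$, which is Claim (a).

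For Claim (b) the inclusion $I_{-2}\subseteq\mathfrak k_{-2}$ is Proposition \ref{kernel of frobenius braket} applied to $M=Y$: for $v\in\widehat{\mathcal C}_o$ and $w\in T_v\widehat{\mathcal C}_o$ the bivector $v\wedge w$ lies in the kernel of the Frobenius bracket, which at the base point $o$ is precisely the algebraic bracket $\wedge^2\mathfrak l_{-1}\to\mathfrak l_{-2}$ (the relation, valid at a general point of $\widehat{\mathcal C}_o$, extends to every point by $L_0$-homogeneity). The reverse inclusion $\mathfrak k_{-2}\subseteq I_{-2}$ is a representation-theoretic matching: both $\mathfrak k_{-2}=\operatorname{Ker}[\,,\,]$ and $I_{-2}$ are $L_0$-submodules of $\wedge^2\mathfrak l_{-1}$, and the Kostant description identifies $\mathfrak k_{-2}$ with a known, essentially multiplicity-free, sum of irreducibles indexed by the adjacent $\beta$. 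For each such summand I would exhibit its highest weight vector explicitly as a tangency bivector $v_0\wedge w_0$, with $v_0$ a highest weight vector of $\mathfrak l_{-1}$ and $w_0$ a suitable element of $T_{v_0}\widehat{\mathcal C}_o$; since $I_{-2}$ is $L_0$-invariant, containing these highest weight vectors forces $I_{-2}=\mathfrak k_{-2}$.

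I expect the main obstacle to be Claim (a): confirming through Kostant's theorem that no relations survive in degrees $\leq-3$, equivalently that the longness of $\alpha$ genuinely confines $H^2(\mathfrak n,\mathbb C)$ to grading degree $2$. This is the one place where the length hypothesis is used in an essential way, and it is exactly where the argument breaks down for a short root, namely when some adjacent $\beta$ has $\langle\beta,\alpha^\vee\rangle=-2$ or $-3$ and produces a genuine relation in degree $-3$ or $-4$ that cannot be generated by the quadratic tangency relations. The representation-theoretic bookkeeping in Claim (b) is routine by comparison, since both sides are pinned down by their extreme weights once the $\mathfrak l_0$-module structure of $\wedge^2\mathfrak l_{-1}$ is decomposed.
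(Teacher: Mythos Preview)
The paper does not supply its own proof of this proposition; it is quoted verbatim from the cited references \cite{HM02} and \cite{HH} and used as a black box in the proof of Proposition~\ref{fundamental graded Lie algebra of vmrt}. Your outline is a correct reconstruction of the argument found in those references. In particular your Claim~(a), via Kostant's theorem and the computation $w\rho-\rho=-\beta-(1-\langle\beta,\alpha^\vee\rangle)\alpha$ showing that $H^2(\mathfrak n,\mathbb C)$ is concentrated in grading degree $2$ exactly when $\alpha$ is long, is the heart of Proposition~4.1 of \cite{HH}; and your Claim~(b) is the content of Proposition~7 of \cite{HM02}, which identifies the kernel of $\wedge^2\mathfrak l_{-1}\to\mathfrak l_{-2}$ with the span of the tangency bivectors by the $L_0$-module matching you describe. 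One small remark: in Claim~(b) you invoke Proposition~\ref{kernel of frobenius braket} for the inclusion $I_{-2}\subset\mathfrak k_{-2}$, but since here everything is algebraic and $L_0$-homogeneous, the simpler observation that a highest weight vector $v_0\in\widehat{\mathcal C}_o$ commutes with every root vector in $T_{v_0}\widehat{\mathcal C}_o$ (the tangent space being the span of $v_0$ and the $\mathrm{ad}(\mathfrak l_0)_{<0}$-translates of $v_0$) already suffices, and this is how \cite{HM02} proceeds.
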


Since $\wedge^2 (\Sym^2 V \otimes W) = \wedge^2 (\Sym^2 V) \otimes \Sym^2 W \oplus \Sym^2 (\Sym^2 V) \otimes \wedge^2 W$, if we decompose $\Sym^2(\Sym^2 V) = \Sym^4V \oplus (\Sym^4V)^{\perp}$, then the Lie bracket $[\,,\,]:\wedge^2 \frak l_{-1} \rightarrow \frak l_{-2}$ is given by the projection map
\begin{eqnarray*}
  \nu:  \wedge^2 (\Sym^2 V \otimes W) & \to &(\Sym^4V)^{\perp} \otimes \wedge^2 W.
\end{eqnarray*}
If $X$ is of type $(B_m, \alpha_{m-1}, \alpha_m)$, then $(\Sym^4V)^{\perp} \otimes \wedge^2 W =  \mathbb C  \otimes \wedge^2 W =  \wedge^2 W$. If $X$ is of type $(F_4, \alpha_2, \alpha_3)$, then $(\Sym^4V)^{\perp} \otimes \wedge^2 W =   \Sym^2(\wedge^2V)  \otimes \mathbb C=\Sym^2(\wedge^2V)$.
Thus $\wedge^2 (\Sym^2 V \otimes W)$ is the direct sum of $\Ker \nu$ and an irreducible representation of $SL(V) \times SL(W)$.

Note that $\frak g_{-1} ={\bf U}$ and $ \frak g_{-2} =  \frak l_{-2}$. The  Lie bracket $[\,,\,]:\wedge^2 \frak g_{-1} \rightarrow \frak g_{-2}$ defines
$$\omega:\wedge^2 (V\oplus \Sym^2 V \otimes W)  \rightarrow \frak l_{-2},$$  where $\omega|_{\wedge^2 (\Sym^2 V \otimes W)}=\nu$ and $\omega(V,V)= \omega(V, \Sym^2V \otimes W)=0$. Then,  $\wedge^2 {\bf U}=\wedge^2 (V\oplus \Sym^2 V \otimes W)$ is decomposed as
$$\wedge^2 (V\oplus \Sym^2 V \otimes W) = \wedge^2 V\oplus (V \wedge (\Sym^2 V\otimes W))\oplus \wedge^2(\Sym^2 V\otimes W) $$
and we have   $\Ker \omega = \wedge^2 V \oplus (V \wedge (\Sym^2 V\otimes W))\oplus \Ker \nu$ %under the decomposition

\begin{lemma}\label{tangent space of the cone V-2}
The kernel $\Ker \omega $ is spanned by $\wedge^2 P \subset \wedge^2 {\bf U}$ where  $P$ is 2-dimensional subspace of ${\bf U}$ tangent to $\widehat{ \bf S }$.
\end{lemma}

\begin{proof} Let $\Xi$ be the subspace of $\wedge^2 (V\oplus \Sym^2 V \otimes W)$ spanned by
   \begin{eqnarray*} \{ \wedge^2 P \subset \wedge^2(V\oplus \Sym^2V \otimes W) | P \mbox{ is tangent to } \widehat{ \bf S }, \dim P=2 \}. \end{eqnarray*}

Since $\widehat{ \bf S } \subset V\oplus \Sym^2V \otimes W $ is a $SL(V)\times SL(W)$-invariant subvariety,
$\Xi$  is also $SL(V)\times SL(W)$-invariant subspace of $\wedge^2 (V\oplus \Sym^2 V \otimes W)$. Furthermore, $\wedge^2 (V\oplus \Sym^2 V \otimes W)$ is the direct sum of $\Ker \omega$ and an irreducible representation of $SL(V) \times SL(W)$.
We will show that $\Ker \omega  $ is contained in $ \Xi$. If so, since $\Xi$ is an $SL(V) \times SL(W)$-invariant subspace and is proper, we get the desired equality.

As the subspace $V$ is contained in $\widehat{\bf S}$, the component $\wedge^2 V$ is contained in $\Xi$.
By Proposition \ref{Vmrt ss-LieAl Str}, the kernel of $\nu$ is spanned by $\wedge^2 P$, where $P$ is tangent to $\widehat{\bf Y}$, and thus is contained in $\Xi$.
We claim that  $V \wedge (\Sym^2 V \otimes W)$ is contained in $\Xi$, which completes the proof.

Let $\beta_t = v_t + v_t^2 \otimes w_t$ be a curve in $\widehat{\bf S}$, where $v_t \in V$ and $w_t \in W$.  By definition of $\Xi$,
 \begin{eqnarray*}
 \beta_0 \wedge \beta'_0 = (v_0 + v_0 ^2 \otimes w_0) \wedge (v_0' +  v'_0 \otimes v_0 \otimes w_0 + v_0 \otimes v'_0 \otimes w_0 + v_0^2 \otimes w_0')
 \end{eqnarray*}
is contained in $\Xi$. Since $\alpha_t:=v_t^2 \otimes w_t$ is a curve in $\widehat{\bf Y} \subset \widehat{\bf S}$,
 $$\alpha_0 \wedge \alpha'_0=(v_0 ^2 \otimes w_0) \wedge (v'_0 \otimes v_0 \otimes w_0 + v_0 \otimes v'_0 \otimes w_0+ v_0^2 \otimes w_0') $$ is contained in $\Xi$. Hence,
$\beta_0 \wedge \beta'_0 -\alpha_0 \wedge \alpha'_0-v_0\wedge v'_0$ which is equal to
 \begin{eqnarray*}
   v_0  \wedge ( v'_0 \otimes v_0 \otimes w_0 + v_0 \otimes v'_0 \otimes w_0 + v_0^2 \otimes w_0')+ (v_0 ^2 \otimes w_0) \wedge  v_0'
 \end{eqnarray*}
 is contained in $\Xi$. Note that
 \begin{eqnarray*}
&&v_0 \wedge (v_0' \otimes v_0 + v_0 \otimes v_0') \\
&& = v_0 \otimes v_0' \otimes v_0 + v_0 \otimes v_0 \otimes v_0' - v_0' \otimes v_0 \otimes v_0 - v_0 \otimes v_0' \otimes v_0 \\
 &&=  v_0^2 \wedge v_0'
 \end{eqnarray*}
 and
 \begin{eqnarray*}
 v_0 \wedge v_0^2 = v_0 \otimes (v_0 \otimes v_0) -(v_0 \otimes v_0) \otimes v_0 = 0
 \end{eqnarray*}
 Therefore, $(v_0' \wedge v_0^2)\otimes w_0$ is contained in $\Xi$.  Here, we consider $V \wedge (\Sym^2 V \otimes W)$ as $(V \wedge \Sym^2 V )\otimes W$.
 This is true for arbitrary $v_0, v_0' \in V$ and $w_0 \in W$, it follows that $V \wedge (\Sym ^2 V \otimes W)$ is contained in $\Xi$.
\end{proof}

\begin{lemma} \label{automorphism group of S} $\Aut^0({\bf S}) = ((SL(V) \times SL(W))/ Z(\bf G)) \times \mathbb C^* \vartriangleright V^* \otimes W^*$ where $Z(\bf G)$ is the center of ${\bf G}=SL(V) \times SL(W)$.
\end{lemma}
\begin{proof}% [Proof of Lemma \ref{automorphism group of S}]
Since ${\bf S}$ is a smooth horospherical ${\bf G}$-variety of rank one with two closed ${\bf G}$-orbits ${\bf Z}$ and ${\bf Y}$, we will apply the same arguments as in the proof of Lemma 1.1. of \cite{Pa}.
\begin{center}
    \begin{tabular}{ c | c c c c l }
type &$\dim {\bf S}$ & $\dim {\bf Y}$ & $\dim {\bf Z}$ & $N_{{\bf Y}|{\bf S}}$ & $N_{{\bf Z}|{\bf S}}$ \\
 \hline
$(B_m, \alpha_{m-1}, \alpha_{m})$ &$m$&$m-1$&$1$& $\mathcal O(1)$&$\mathcal O(-1)^{m-1}$ \\
$(F_{4}, \alpha_{2}, \alpha_{3})$ &$4$&$3$&$1$&$\mathcal O(1)$&$\mathcal O(-1)^{2}$ \\
    \end{tabular}
\end{center}
Thus $H^0({\bf Y}, N_{{\bf Y}|{\bf S}}) =V^* \otimes W^*$ and $H^0({\bf Z}, N_{{\bf Z}|{\bf S}})=0$.

By the same arguments as in the proof of Lemma 1.1. of \cite{Pa}, the closed $\bf G$-orbit ${\bf Z} $ is stable under the action of $\Aut^0({\bf S})$ and we have
\begin{eqnarray*}
\Aut^0({\bf S}) &=& ({\bf G}/ Z(\bf G) \times \mathbb C^*)  \vartriangleright H^0({\bf Y}, N_{{\bf S}_Y/{\bf S}}) \\
&=& ((SL(V) \times SL(W))/ Z(\bf G))\times \mathbb C^* \vartriangleright V^* \otimes W^*,
\end{eqnarray*}
where $Z(\bf G)$ is the center of $\bf G$.
\end{proof}

\begin{proof} [Proof of Proposition \ref{fundamental graded Lie algebra of vmrt} (1)]
Proposition \ref{Vmrt ss-LieAl Str} says that $\frak l_-=\oplus_{k<0}\frak l_k$ is the fundamental graded Lie algebra determined by ${\bf Y}   \subset \mathbb P(\frak l_{-1})$.
Since $\frak m = U_{-1} \oplus \frak l_-$ and $[U_{-1}, \frak m]=0$,  By Proposition \ref{Vmrt ss-LieAl Str} and Lemma \ref{tangent space of the cone V-2},  the graded Lie algebra $\frak m$ is  fundamental and determined by ${\bf S} \subset \mathbb P\frak g_{-1}$.
\end{proof}

\begin{proof} [Proof of Proposition \ref{fundamental graded Lie algebra of vmrt} (2)]
 We recall that $\mathfrak g_0 =(\mathfrak l_0 \oplus \mathbb C   )\vartriangleright U_{0} \subset \mathfrak g_0(\mathfrak m)$ from Proposition \ref{tangentsp of X}. The center of $\frak l_0$ is of dimension one, the semisimple part of $\frak l_0$ is $\frak{sl}(V) + \frak {sl}(W)$ and the vector space $U_{0}$ is $V^* \otimes W^*$. We compare $\frak g_0$ with the Lie algebra of the neutral component $\Aut^0(\widehat{{\bf S}})$ of the automorphism group of the cone $\widehat{\bf S} \subset \frak g_{-1}$ over $\bf S$. %From Lemma \ref{automorphism group of S}, $\Aut(\widehat{{\bf S}})$ is equal to the linear automorphism group $G(\widehat{\bf S})$  and the induced map $\frak g(\widehat{\bf S}) \rightarrow \frak g_0(\frak m)$ is injective whose image in $\mathfrak g_0(\frak m)$ agrees with $\mathfrak g_0   \subset \mathfrak g_0(\mathfrak m)$.
 Then the rest of proof follows from Lemma \ref{automorphism group of S}, that is, $\Aut^0(\widehat{{\bf S}})$ is equal to the linear automorphism group $G(\widehat{\bf S})$    and the induced map $\frak g(\widehat{\bf S}) \rightarrow \frak g_0(\frak m)$ is injective whose image   in $\mathfrak g_0(\frak m)$ agrees with $\mathfrak g_0   \subset \mathfrak g_0(\mathfrak m)$.
\end{proof}

\subsection{Parallel transports of VMRTs}
We will show that ${\bf S}$ is not changed under the deformation keeping the second fundamental form and the third fundamental form constant (Proposition \ref{parallel transport of vmrt}).
We adapt arguments in the proof of  Proposition 8.9 of \cite{Hw15}, which proves the same statement as in Proposition \ref{parallel transport of vmrt}  for the case when $X$ is   $(G_2, \alpha_2,\alpha_1)$. \\

Assume that  $X$ is  $(F_4, \alpha_2, \alpha_3)$.
Then ${\bf U}$ and ${\bf S}$ are given by   %${\bf S} \subset \mathbb P {\bf U}$ be the variety of minimal rational tangents of $(F_4, \alpha_2, \alpha_3)$.
  ${\bf U} = V \oplus (\Sym^2(V) \otimes W)$ and
\begin{eqnarray*}{\bf S}=\mathbb P \{ c v + v^2 \otimes w: c \in \mathbb C, v \in V, w \in W\},
\end{eqnarray*}
where  $V$ is a vector space of dimension   $3$ and $W$ is a vector space of dimension  $2$.

Consider ${\bf S}$ as a projective bundle $\mathbb P(\mathcal O(-1) \oplus \mathcal O(-2)^2)$ over $\mathbb P(V) $. Let $\psi \colon {\bf S}  \rightarrow \mathbb P(V) $ denote this $\mathbb P^2$-fibration and
let $\xi$ denote the dual tautological line bundle on $\psi:{\bf S} \rightarrow \mathbb P(V)$. Then $H^0({\bf S}, \xi) = H^0(\mathbb P(V), \psi_*\xi) =V^* \oplus (\Sym^2 V ^*\otimes W^*)$. Hence $\xi$ induces the embedding ${\bf S} \subset \mathbb P({\bf U})$.

The $\mathcal O(-1)$-factor defines a subvariety $Z=\mathbb P(\mathcal O(-1)) \subset {\bf S}$, which is isomorphic to $   \mathbb P^2$  %Let $Z \subset {\bf S}$ be the subvariety corresponding to $\mathbb P(\mathcal O(-1)) \simeq \mathbb P^2$.
 and is a section of $\psi$. A choice of an $\mathcal O(-2)$-factor gives a section $B$ of $\psi$ whose linear span $A$ is isomorphic to $\mathbb P^5$ and disjoint from $Z$. We call such a section a \emph{complementary section}. %We have the exact sequence $$ 0 \rightarrow \mathbb P(\mathcal O(-1) \oplus \mathcal O(-2))  \rightarrow  \mathbb P(\mathcal O(-1) \oplus \mathcal O(-2)^2) \rightarrow  B \simeq \mathcal O(-2) \rightarrow 0.$$

Fix an $\mathcal O(-2)$-factor and denote by $B_0$ the corresponding complementary section.
Then the complement ${\bf S} -Z$ of $Z$ is biholomorphic to the total space of the vector bundle $\mathcal O(1)^2$ on $\mathbb P(V)$ whose zero section corresponds to $B_0$. A complementary section corresponds to a section of $\mathcal O(1)^2$. Thus,  for a given   triple $(s_1^1, s_1^2, s_1^3)$ in ${\bf S} -Z$  such that $\psi(s_1^1), \psi(s_1^2), \psi(s_1^3)$ are distinct,  there is a unique complementary section  $B_1$   with  $s_1^1, s_1^2, s_1^3 \in B_1$.

\begin{proposition} \label{characterization of vmrt F}  When $X$ is   $(F_4, \alpha_2, \alpha_3)$,
 ${\bf S} \subset \mathbb P^{14}$ is obtained from the following four data:
   \begin{enumerate}
\item[(i)] a plane $Z \subset \mathbb P^{14}$,
\item[(ii)] two disjoint linear spaces $A_1 \cong \mathbb P^5$ and $A_2 \cong \mathbb P^5$
such that the union $Z \cup A_1 \cup A_2$ span $\mathbb P^{14}$, %whose span is disjoint from $Z$, so that  $\mathbb P(\hat Z \oplus \hat A_1 \oplus \hat A_2) \cong \mathbb P^{14}$,
\item[(iii)] two subvarieties $B_1 \subset A_1$ and $B_2 \subset A_2$, each of which is  isomorphic to the Veronese surface $\nu(\mathbb P^2) \subset \mathbb P^5$,
\item[(iv)] two birational maps $\epsilon_1 \colon Z \rightarrow B_1$ and $\epsilon_2 \colon Z \rightarrow B_2$.
   \end{enumerate}
\end{proposition}

\begin{proof}
The locus of planes generated by $z \in Z$ and $\varepsilon_1(z), \varepsilon_2(z)$ is a subvariety of $\mathbb P^{14}$ projectively equivalent to ${\bf S} \subset \mathbb P^{14}$.
\end{proof}

Similarly we have the following.

\begin{proposition} \label{characterization of vmrt B}  When $X$ is  $(B_m, \alpha_{m-1}, \alpha_m)$,
 ${\bf S} \subset \mathbb P^{ 3m-2}$ is obtained from the following four data:
   \begin{enumerate}
\item[(i)] a line $Z \subset \mathbb P^{3m-2}$,
\item[(ii)] $m-1$ linear spaces $A_i \cong \mathbb P^3$, where $1 \leq i \leq m-1$, such that the union $Z \cup \left(\cup_{i=1}^{m-1} A_i\right) $ spans $ \mathbb P^{3m-2}$,
\item[(iii)] $m-1$ subvarieties $B_i \subset A_i$, where $1 \leq i \leq m-1$, each of   which is  isomorphic to the conic $\nu(\mathbb P^1) \subset \mathbb P^2$,
\item[(iv)] $m-1$ birational maps $\epsilon_i \colon Z \rightarrow B_i$, where $1 \leq i \leq m-1$.
   \end{enumerate}
\end{proposition}

\begin{proposition} \label{parallel transport of vmrt}
Let ${\bf S} \subset \mathbb P {\bf U} $ be the variety of minimal rational tangents of
$(B_m, \alpha_{m-1}, \alpha_{m})$, $m \geq 3$ or $(F_4, \alpha_2, \alpha_3)$ at the base point.

  Let $\pi: \mathbb P \mathcal U\rightarrow \mathbb P^1$ be the projectivization of a holomorphic vector bundle $\mathcal U$ over  $  \mathbb P^1$  and let $\mathcal C \subset \mathbb P \mathcal U$ be an irreducible subvariety. Denote by $\varpi$ the restriction of $\pi$ to $\mathcal C$. Assume that
   \begin{enumerate}
   \item $\mathcal C_t:=\varpi^{-1}(t) \subset \mathbb P  \mathcal U_t:=\pi^{-1}(t)$ is projectively equivalent to ${\bf S} \subset \mathbb P {\bf U}$ for all $t \in \mathbb P^1 -\{t_1, \dots, t_k\}$; % but (possibly) finitely many points of $\mathbb P^1$;
  \item
      for  a general  section $\sigma \subset \mathcal C$ of $\varpi$, the relative second fundamental forms and the relative third fundamental forms of $\mathcal C$ along $\sigma$ are constants.
     \end{enumerate}

Then for any $t \in \mathbb P^1$, $\mathcal C_t \subset \mathbb P(\mathcal U_t)$ is projectively equivalent  to ${\bf S} \subset \mathbb P({\bf U})$.
\end{proposition}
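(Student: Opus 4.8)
The strategy is to show that the projective structure of each fiber $\mathcal C_t$ is rigid enough that the two constancy hypotheses force $\mathcal C_t$ to decompose exactly as in the explicit descriptions of Proposition \ref{characterization of vmrt F} and Proposition \ref{characterization of vmrt B}. I would treat the $(F_4,\alpha_2,\alpha_3)$ case in detail and indicate the parallel argument for $(B_m,\alpha_{m-1},\alpha_m)$, since the structural data are formally the same (a linear ``core'' $Z$, several disjoint Veronese-type pieces $B_i$ inside linear spans $A_i$, and birational identifications $\epsilon_i$).

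First I would set up the relative picture. By hypothesis (1), over $\mathbb P^1-\{t_1,\dots,t_k\}$ every fiber is projectively equivalent to $\mathbf S$, so the characterizing data of $\mathbf S$ vary holomorphically in $t$: one gets a relative subvariety $\mathcal Z \subset \mathcal C$ specializing fiberwise to $Z$ (the $\mathcal O(-1)$-locus), together with relative linear spans $\mathcal A_i$ and relative Veronese pieces $\mathcal B_i \subset \mathcal A_i$, defined a priori only on the generic locus. The content of hypothesis (2) is that the relative second and third fundamental forms along a general section $\sigma$ are \emph{constant}; by Lemma \ref{lem:finer exact sequences} these fundamental forms encode precisely the filtration $\mathbb C\beta \subset T_\beta\widehat{\mathbf S} \subset T^{(2)}_\beta\widehat{\mathbf S} \subset \mathbf U$ and the multiplication maps $V\otimes V\to\Sym^2 V$, $V\otimes W$, etc. Constancy of $II^\varpi$ and $III^\varpi$ means the osculating filtration and its associated symbol maps extend as \emph{holomorphically trivial} subbundles along $\sigma$, hence across all of $\mathbb P^1$ including the special values $t_j$. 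The plan is to use this to extend each of the four data across $t_1,\dots,t_k$.

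The key steps, in order, are: (i) extend the line/plane $Z$ to a relative $\mathcal Z$ over all of $\mathbb P^1$, using that $Z$ is cut out by the lowest-weight piece $\mathbb C\beta$ of the osculating filtration, which is constant by hypothesis (2); (ii) extend the disjoint linear spans $A_i$ to relative linear subbundles $\mathcal A_i\subset\mathbb P\mathcal U$, using that the ambient space $\mathbf U/T^{(2)}_\beta\widehat{\mathbf S}\cong \Sym^2(V/V_0)\otimes(W/W_0)$ and the intermediate quotients of Lemma \ref{lem:finer exact sequences} are all constant, so the span directions do not degenerate; (iii) check that at each special fiber the limits $A_i(t_j)$ remain \emph{disjoint} from $Z(t_j)$ and from each other and jointly span $\mathbb P\mathcal U_{t_j}$ — this is where one must rule out a drop of dimension or a collision of components; (iv) extend the Veronese (resp.\ conic) pieces $B_i$ and the birational maps $\epsilon_i$, noting that a flat family of Veronese surfaces in $\mathbb P^5$ whose members stay nondegenerate has a Veronese special fiber (the Veronese embedding is projectively rigid), and a family of birational $Z\to B_i$ with constant fundamental-form data limits to a birational map; (v) invoke Proposition \ref{characterization of vmrt F} (resp.\ \ref{characterization of vmrt B}) to conclude that the reconstructed special fiber $\mathcal C_{t_j}$, built from $(Z(t_j),A_i(t_j),B_i(t_j),\epsilon_i(t_j))$, is projectively equivalent to $\mathbf S$, and that it coincides with the flat limit of $\mathcal C_t$ by irreducibility of $\mathcal C$.

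The main obstacle is step (iii), the \emph{nondegeneration at the special fibers}: one must exclude the possibility that as $t\to t_j$ two of the linear spans $A_i$ merge, or that the span of some $A_i$ jumps down in dimension, or that a $B_i$ degenerates to a non-Veronese limit (a cone or a pair of lines). The way I would handle this is dimension-counting combined with the constancy of the fundamental forms: the numbers $(p,q,r,s,t)$ of Remark \ref{pqrst} are computed from $II^\varpi$ and $III^\varpi$, so hypothesis (2) forces these to be the same at $t_j$ as at generic $t$; since these numbers rigidify the dimensions of the osculating spaces and hence of the $A_i$ and $B_i$, no degeneration consistent with constant fundamental forms can occur. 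In other words, any degeneration would change the rank of $II^\varpi$ or $III^\varpi$, contradicting hypothesis (2). Closing this argument carefully — translating ``constant fundamental form'' into ``the reconstruction data of Proposition \ref{characterization of vmrt F} extend without collision'' — is the crux, and it is exactly the step where following the template of Proposition 8.9 of \cite{Hw15} for the $G_2$ case is essential.
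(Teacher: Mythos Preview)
Your overall strategy---reconstruct the four data of Proposition \ref{characterization of vmrt F}/\ref{characterization of vmrt B} at each special fiber and then invoke those characterizations---is the same as the paper's. But two of your steps rest on mechanisms that are not what you describe, and the gap matters.

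First, the data $A_i, B_i$ are \emph{not canonical}: they depend on a choice of $\mathcal O(-2)$-factor, i.e., a choice of complementary section. So there is no pre-existing relative subbundle $\mathcal A_i$ to ``extend''; one must \emph{construct} it. The paper does this by first identifying the planes of the ruling at $t=0$ via the baselocus of $II_{\beta}$ (this is the role of constancy of $II^\varpi$, not the osculating filtration of a single point), then choosing $\dim V$ points $s^1,\dots,s^{\dim V}$ in distinct ruling-planes of $\mathcal C_0\setminus Z_0$, lifting them to local sections $\sigma^j(t)$, and using that a complementary section of $\psi_t$ is uniquely determined by $\dim V$ such points. This produces $B_{i,t}$ and its span $A_{i,t}$ for $t\neq 0$, with the limits $A_{i,0}$ automatically linear and containing the chosen $s^j$. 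Your description of $Z$ as ``cut out by the lowest-weight piece $\mathbb C\beta$'' conflates the distinguished section $Z\subset\mathbf S$ with the base of the osculating filtration at a point; the actual device for recovering the ruling, and hence $Z$, is $\mathrm{Baselocus}(II)$.

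Second, your step (iv) understates the work. That the limit $B_{i,0}\subset A_{i,0}$ is still a Veronese surface (resp.\ conic) uses a minimal-degree argument: one shows $B_{i,0}$ is irreducible and nondegenerate in $A_{i,0}$ (else spanning fails, contradicting the fundamental-form hypothesis), and then appeals to the fact that the Veronese has minimal degree among nondegenerate surfaces in $\mathbb P^5$. Extending the birational maps $\epsilon_i$ is handled not by abstract rigidity but by a concrete blow-up along $\mathcal Z$: the proper transforms of ruling-planes meet the exceptional divisor $E_t\cong Z_t\times\mathbb P(\widehat A_{1,t}\oplus\widehat A_{2,t})$ in graphs $\Gamma_t$, and an intersection-number computation with the relatively ample line bundle $\mathcal L^Z\otimes\mathcal L^{A_1}\otimes\mathcal L^{A_2}$ shows $\Gamma_0$ is irreducible and dominates both $Z_0$ and each $B_{j,0}$, yielding $\epsilon_{j,0}$. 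The dimension/rank bookkeeping from Remark \ref{pqrst} alone does not close this; one needs these degree arguments.
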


\begin{proof} Assume that $\mathcal C_t \subset \mathbb P \mathcal  U_t$ is projectively equivalent to ${\bf S} \subset \mathbb P {\bf U}$ for all $t  $ in the unit disc $\Delta \subset \mathbb P^1$ except  for $t=0$. Assume further  that for  a general  section $\sigma \subset \mathcal C$ of $\varpi$, the relative second fundamental forms and the relative third fundamental forms of $\mathcal C$ along $\sigma$ are constants. Then there is an open submanifold $\mathcal C^0\subset \mathcal C$ such that $\mathcal C_t^0 \subset \mathcal C_t $ corresponds to an open subset of ${\bf S} -Z \subset {\bf S}$ for $t \not=0  \in \Delta $.   We claim that $\mathcal C_0 \subset \mathbb P \mathcal U_0$ is also projectively equivalent to ${\bf S} \subset \mathbb P {\bf U}$.
It suffice to show that we   get   the four data in Proposition \ref{characterization of vmrt F} and Proposition \ref{characterization of vmrt B} at $t=0$ from $\mathcal C_0=\varpi^{-1}(0) \subset \mathbb P \mathcal  U_0$.
\\

Assume that $X$ is   $(F_4, \alpha_2, \alpha_3)$.
Let $\beta_t \in \mathcal C_t$  be a section of $\varpi$ corresponding to a general point $\beta \in {\bf S}$ for $t\neq 0$ and let $\beta_0$ be the limit. According to the computation of the second fundamental form in the proof of  Lemma  \ref{parallel transport of III-X case} (1), we see that $Baselocus(II_{\beta_t})$ gives us a foliation whose leaves are isomorphic to $\mathbb P^2$-fiber of the projective bundle ${\bf S}$ over $\mathbb P^2$. By the assumption (2), there is also a foliation on the central fiber given by $Baselocus(II_{\beta_0})$, which is exactly $\mathbb P^2$ in $\mathcal C_0=\varpi^{-1}(0) \subset \mathbb P \mathcal  U_0$. We call these subvarieties $\mathbb P^2$ of $\mathcal C_t$ for $t \in \Delta$, corresponding to the $\mathbb P^2$-fiber of  ${\bf S}$, \emph{the planes of the rulings}. \\

(i) Let $\psi_t \colon \mathcal C_t  \rightarrow \mathbb P^2$ be the coresponding fibration for $t \neq 0$ of the projective bundle ${\bf S}=\mathbb P(\mathcal O(-1) \oplus \mathcal O(-2)^2) \rightarrow \mathbb P^2$. Then $Z \subset {\bf S}$ gives familly of distingushied planes $Z_t \simeq \mathbb P(\mathcal O(-1))) \subset \mathcal C_t \subset \mathbb P  \mathcal U_t$ for $t\neq 0$, which is a section of $\psi_t$. Then the limit $Z_0$ is also a $\mathbb P^2$. Hence, there are $\mathbb P^2$ subbundle  $\mathcal Z \subset \mathcal C$ of $\pi: \mathbb P \mathcal U\rightarrow \mathbb P^1$. \\

(ii) Pick 3 distinct points $s_1^1, s_1^2, s_1^3 \in \mathcal C_0 - Z_0$ that lie  in 3 distinct planes of  the rulings and choose local sections $\sigma^1_1, \sigma^2_1, \sigma^2_1$ of $\varpi^0:\mathcal C^0 =\mathcal C - \mathcal Z  \rightarrow \Delta$ such that $\sigma^i_1(0)=s_1^i$ for $i=1, 2, 3$. Then there exists a unique complementary section $B_{1,t}$ with $\sigma^1_1(t), \sigma^2_1(t), \sigma^3_1(t) \in B_{1,t}$  for any $t \not=0$.

Let  $A_{1,t}$ be the linear span of $B_{1,t}$.
Then,   $B_{1,t} \subset A_{1,t}$ is isomorphic to the Veronese surface $\nu(\mathbb P^2) \subset \mathbb P^5$. The limit $A_{1,0}$ of $A_{1,t}\simeq \mathbb P^5 \subset \mathbb P \mathcal U_t$ is  also a projective space  $  \mathbb P^5 \subset \mathbb P \mathcal U_0$, which contains $s_1, s_2, s_3  $.

Choose 3 distinct points  $s^1_2, s^2_2, s^3_2 $ in the complement $\mathcal C_0 - (Z_0 \cup A_{1,0})$ such that  each line $span(s^i_1, s^i_2)$ for $i=1,2,3$ is in the same  plane of the rulings. Choose local sections $\sigma^i_2$ of $\varpi$ such that $\sigma^i_2(t) \in \mathcal C_t  - (Z_t \cup  A_{1,t})$ and $\sigma^i_2(0)=s_2^i$ for $i=1, 2, 3$. Then there exist a  unique complementary section $B_{2,t} $
 such that $\sigma^1_2(t), \sigma^2_2(t), \sigma^3_2(t) \in B_{2,t}$.

 Let  $A_{2,t}$ be the linear span of $B_{2,t}$. % isomorphic to $\mathbb P^5$.
 The two complementary sections $B_{1,t} \subset A_{1,t}$ and $B_{2,t} \subset A_{2,t}$ are isomorphic to the Veronese surface $\nu(\mathbb P^2) \subset \mathbb P^5$ for $t \not=0$.

 The limit $A_{j,0}$ of $A_{j,t}\simeq \mathbb P^5 \subset \mathbb P \mathcal U_t$  is also projective space $ \mathbb P^5 \subset \mathbb P \mathcal U_0$, $j=1, 2$ such that $s^1_1, s^2_1, s^3_1 \in A_{1,0}$ and $s^1_2, s^2_2, s^3_2 \in A_{2,0}$.
 Then the union $Z_0 \cup A_{1,0} \cup A_{2,0}$ spans $\mathbb P \mathcal U_0$. For, otherwise, there is hyperplane such that $Z_0 \cup A_{1,0}  \cup A_{2,0} \subset \mathbb P^{13} \subset \mathbb P \mathcal U_0$ which contradicts to the constancy of second and third fundamental form. \\

(iii) Consider the limit $B_{j.0}$ of $B_{j,t}$, which is a surface in $A_{j,0}=\mathbb P^5$ of degree less then $4$. The planes of rulings on $\mathcal C_0$ intersecting $A_{j,0}$, give  a analytic surface $B'_{j,0} \subset B_{j,0}$. Let $A'_{j,0}$ be the linear span of $B'_{j,0}$. If $B'_{j,0}$ lies in a hyperplace of $A_{j,0}$, then $\mathbb P (Z_0 \oplus A'_{1,0}\oplus A'_{2,0})$ is contained in a hyperplane of $\mathbb P U_0$ which contradiction to the fundamental forms. Hence, $B'_{j,0}$ must be non-degenerate in $A_{j,0}$ and this means $B_{j,0}$ is irreducible non-degenerate surface. Since the Veronese surface is minimal degree surface, a Veronese surface for $j=1,2$. \\

(iv) With a choice of $Z_t$, $B_{j,t}$ and $A_{1,t}$ for $j=1, 2$ as above in (i) - (ii), we have birational morphisms $\varepsilon_{j,t}\colon Z_0 \rightarrow B_{j,t}$ for $t \neq 0$ such that the planes of rulings on $\mathcal C_t$ are the plane spanned by $z_t \in Z_t$, $\varepsilon_{1,t}(z_t)  \in B_{1,t}$ and $\varepsilon_{2,t}(z_t)  \in B_{2,t}$.
Let $\tilde{\mathcal P}$ be the blow up of the bundle $\pi: \mathbb P \mathcal U\rightarrow \Delta$ along the submanifold $\mathcal Z$ and let ${\mathcal E}$ be the exceptional divisor,  which is biholomorphic to $\mathcal Z \times _{\Delta}\mathbb P (\widehat {\mathcal A_1} \oplus \widehat{\mathcal A_2})$. Let $E_t$ be the exceptional divisor of the blow up of $\mathbb P \mathcal U_t$, which is $Z_t \times \mathbb P (\widehat{A_{1,t}} \oplus \widehat{A_{2,t}})$. % as a bundle over $\mathbb P^1$.
 For $t \neq 0$, the proper transform of the plane joining $z \in Z_t$, $\varepsilon_{1,t}(z)  \in B_{1,t}$ and $\varepsilon_{2,t}(z)  \in B_{2,t}$ intersects $E_t$ at a point $(z,\epsilon_t(z)) \in  E_t$, where $\epsilon_t(z):=\varepsilon_{1,t}(z)\oplus\varepsilon_{2,t}(z)$. The surface $\Gamma_t:=\{(z, \varepsilon_t (z)| z \in Z_t\}$ corresponds to the graph of $\epsilon_t$.
Let $\Gamma_0 \subset E_0$ be the limit of $\Gamma_t$.
Then there exists an irreducible component $\Gamma'_0$ of $\Gamma_0$ which is the intersection of $E_0$ with the proper transform of the planes in the planes of ruling at $t=0$. By definition, $\Gamma'_0$ dominate $B_{j,0}$ for $j=1, 2$. $\Gamma'_0$   dominates $Z_0$, too. For, otherwise, the intersection with the planes of ruling of $Z_0$ is codimension $\geq 1$, which contradicts to the constancy of the second fundamental forms and the third fundamental forms.

On $\mathcal E$, let $\mathcal L^Z$ be the line bundle which is the pull back of the relative $\mathcal O(1)$ bundle on $\mathcal Z$ and  let $\mathcal L^{A_j}$ be the line bundle which is the pull back of the relative $\mathcal O(1)$ bundle on $\mathcal A_{j}$. Then $\mathcal L^Z \otimes \mathcal L^{A_1} \otimes \mathcal L^{A_2}$ be a relative ample line bundle on $\mathcal E$ which has degree 9 with respect to $\Gamma_t$ for $t\neq 0$. Since $\Gamma'_0$ dominate $Z_0$ and $B_{j,0}$ for $j=1, 2$, we have $\Gamma'_0\cdot \mathcal L^{Z}\geq 1$ and $\Gamma'_0\cdot \mathcal L^{A_{j}} \geq 4$. Hence, $\Gamma'_0=\Gamma_0$ and $\Gamma_0$ determines birational map $\varepsilon_{j,0} \colon Z_0 \rightarrow B_{j,0}$ for $j=1, 2$. \\

In case when $X$ is $(B_m, \alpha_{m-1}, \alpha_{m})$, $m>2$,
\begin{eqnarray*}{\bf S}=\mathbb P \{ c v + v^2 \otimes w: c \in \mathbb C, v \in V, w \in W\} \simeq \mathbb P(\mathcal O(-1) \oplus \mathcal O(-2)^{m-1})\end{eqnarray*}
where $\dim V =2$ and $\dim W= m-1$. Since $ \nu_2 (\mathbb P^1) \subset \mathbb P \Sym^2 V \simeq  \mathbb P^2$ is rational normal curve with minimal degree, we conclude the result in a similar method.
\end{proof}

\section{$H^2$-cohomologies} \label{sect:H2 cohomology}

%In Section \ref{sect:computation of H2 cohomology} -- \ref{sect: proof of proposition}, we assume that $X$ is either of type $(B_m, \alpha_{m-1}, \alpha_m)$ where $m \geq 3$ or of type $(F_4, \alpha_2, \alpha_3)$ and

 We keep the same assumptions and notations as in Section \ref{sect:horospherical BF type}. We write $\mathbb C$ as $\frak z$ to emphasize that $\mathbb C$ commutes with $\mathfrak l$.

\subsection{Reductions}   \label{sect:computation of H2 cohomology}
A main ingredient is that the computation of $H^2(\frak m, \frak g)$ can be reduced to that of  $H^2(\frak l_-, \frak g)$ and $H^1(\frak l_-, \frak g)$, which can be computed  by applying Kostant's theory.

\begin{proposition} \label{prop: computation of H2 cohomology} Let $k \geq 1$.
 Let $\phi \in \Hom(\wedge^2 \frak m, \frak g)_k$ be such that $\partial \phi=0$.
 Then there is $\eta \in  \Hom(\frak m, \frak g)_k$ and  $\zeta \in \Hom(\wedge^2 \frak g_{-1}, U )_k$  such that
 $$\phi = \partial \eta + \zeta   $$
 and $\zeta|_{\wedge^2 \frak l_-} \in H^2(\frak l_-, U  )_k$ and $\zeta(Y^{-1}, \,\cdot\,)|_{\frak l_{-}} \in H^1(\frak l_-, U)_{k-1}+H^1(\frak l_-, \frak l)_{k-1}$ for any $Y^{-1} \in U_-$ and $\zeta|_{\wedge^2U_-} \in H^2(U_-, U)_1$.
 Furthermore, the choice of $\zeta$ in the  expression $\phi = \partial \eta + \zeta   $ is unique.
\end{proposition}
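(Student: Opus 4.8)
The plan is to exploit the fact that, as a graded Lie algebra, $\mathfrak{m}$ splits as a direct product $\mathfrak{m} = \mathfrak{l}_- \oplus U_-$ in which $U_- = U_{-1}$ is \emph{central} (I use here that $U$ is abelian and that $[U_{-1},\mathfrak{m}]=0$, as recorded in Proposition \ref{tangentsp of X}). This induces a splitting
$$\wedge^2\mathfrak{m} = \wedge^2\mathfrak{l}_- \oplus (\mathfrak{l}_- \wedge U_-) \oplus \wedge^2 U_-,$$
and correspondingly every $2$-cochain $\phi$ breaks into three blocks $\phi_{\mathfrak{l}\mathfrak{l}}$, $\phi_{U\mathfrak{l}}$, $\phi_{UU}$. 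The aim is to build a $1$-cochain $\eta\in\Hom(\mathfrak{m},\mathfrak{g})_k$ so that the three blocks of $\zeta:=\phi-\partial\eta$ become, respectively, a harmonic (Kostant) representative of $H^2(\mathfrak{l}_-,U)_k$, a $U_-$-indexed family of harmonic representatives of $H^1(\mathfrak{l}_-,U)_{k-1}\oplus H^1(\mathfrak{l}_-,\mathfrak{l})_{k-1}$, and a representative of $H^2(U_-,U)_1$; uniqueness of $\zeta$ will be read off from the uniqueness of these harmonic projections.

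First I would expand the cocycle condition $\partial\phi=0$ on the four types of triples $(\mathfrak{l},\mathfrak{l},\mathfrak{l})$, $(U,\mathfrak{l},\mathfrak{l})$, $(U,U,\mathfrak{l})$, $(U,U,U)$. Because $U_-$ is central, every bracket $[U_-,-]$ drops out, so the equations collapse to: $\partial_{\mathfrak{l}}\phi_{\mathfrak{l}\mathfrak{l}}=0$; $\partial_{\mathfrak{l}}\big(\phi_{U\mathfrak{l}}(Y^{-1},\cdot)\big)=-\,\operatorname{ad}(Y^{-1})\circ\phi_{\mathfrak{l}\mathfrak{l}}$ for each $Y^{-1}\in U_-$; a parallel relation coupling $\phi_{UU}$ to $\phi_{U\mathfrak{l}}$; and $\partial_U\phi_{UU}=0$. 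Thus only the leading block $\phi_{\mathfrak{l}\mathfrak{l}}$ is a genuine $\mathfrak{l}_-$-cocycle, while $\phi_{U\mathfrak{l}}(Y^{-1},\cdot)$ and $\phi_{UU}$ are cocycles merely modulo explicit obstruction terms assembled from the lower blocks.

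Next I would normalize from the deepest block outward. Since $\mathfrak{l}$ is semisimple, Kostant's theorem applies to $H^\bullet(\mathfrak{l}_-,\mathfrak{g})$ after decomposing the coefficient module $\mathfrak{g}=\mathfrak{l}\oplus\mathfrak{z}\oplus U$ into $\mathfrak{l}$-submodules. Choosing $\eta|_{\mathfrak{l}_-}$ to subtract the $\mathfrak{l}_-$-coboundary part reduces $\phi_{\mathfrak{l}\mathfrak{l}}$ to its harmonic projection in $H^2(\mathfrak{l}_-,\mathfrak{l})_k\oplus H^2(\mathfrak{l}_-,\mathfrak{z})_k\oplus H^2(\mathfrak{l}_-,U)_k$. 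The coupling relation then says $\operatorname{ad}(Y^{-1})\circ\phi_{\mathfrak{l}\mathfrak{l}}$ is an $\mathfrak{l}_-$-coboundary for all $Y^{-1}$; since $\operatorname{ad}(U_{-1})$ annihilates $U$ and carries $\mathfrak{l}\oplus\mathfrak{z}$ into $U$, this is precisely the mechanism that absorbs the $\mathfrak{l}$- and $\mathfrak{z}$-valued part of the harmonic $\phi_{\mathfrak{l}\mathfrak{l}}$ into an adjustment of $\phi_{U\mathfrak{l}}$, leaving a purely $U$-valued harmonic representative. Afterwards $\phi_{U\mathfrak{l}}(Y^{-1},\cdot)$ is an honest $\mathfrak{l}_-$-$1$-cocycle, which I reduce to its class in $H^1(\mathfrak{l}_-,U)_{k-1}\oplus H^1(\mathfrak{l}_-,\mathfrak{l})_{k-1}$ by an appropriate $\eta|_{U_-}$, and the residual freedom in $\eta|_{U_-}$ finally brings $\phi_{UU}$ to a representative of $H^2(U_-,U)_1$.

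The hard part will be the cross-coupling between the blocks: they are not independent cocycles, so after normalizing the leading $\mathfrak{l}\mathfrak{l}$-block I must verify that the obstruction terms $\operatorname{ad}(Y^{-1})\circ\phi_{\mathfrak{l}\mathfrak{l}}$ and the $(U,U,\mathfrak{l})$-term are actually coboundaries of the lower blocks, so that the successive normalizations can be performed and so that, in the end, no $\mathfrak{l}$- or $\mathfrak{z}$-valued $\mathfrak{l}\mathfrak{l}$-part survives. This is exactly the transgression/differential information that the Lyndon--Hochschild--Serre spectral sequence would package, and here it has to be checked by hand using the explicit central action of $U_-$ together with Kostant's description of the harmonic spaces. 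For uniqueness I would argue that two admissible choices of $\zeta$ differ by a coboundary whose blocks lie in the harmonic complements fixed above, forcing that difference to vanish; so the harmonic part $\zeta$ in $\phi=\partial\eta+\zeta$ is determined independently of the choices made for $\eta$.
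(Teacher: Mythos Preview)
Your overall architecture matches the paper's: decompose $\wedge^2\mathfrak m$ into the three blocks, expand $\partial\phi=0$ on the four types of triples, and normalize block by block using Kostant's theorem on $\mathfrak l_-$-cohomology. The paper carries this out in steps R1--R6.

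The gap is in your treatment of the $\mathfrak z$-valued pieces. You write that the coupling relation ``absorbs the $\mathfrak l$- and $\mathfrak z$-valued part of the harmonic $\phi_{\mathfrak l\mathfrak l}$ into an adjustment of $\phi_{U\mathfrak l}$, leaving a purely $U$-valued harmonic representative.'' This cannot work. A harmonic $\zeta_{\mathfrak z}\in H^2(\mathfrak l_-,\mathfrak z)_2$ is not a $\partial$-coboundary on $\mathfrak m$: since $[\mathfrak l_-,\mathfrak g]\cap\mathfrak z=0$, the $\mathfrak z$-component of $(\partial\eta)|_{\wedge^2\mathfrak l_-}$ is just $-\eta_{\mathfrak z}\circ[\,\cdot,\cdot\,]$, i.e.\ a $\partial_0$-coboundary, which cannot equal a nonzero harmonic class. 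So the $\mathfrak z$-part cannot be moved into $\partial\eta$; it must be shown to \emph{vanish}. The same issue arises one level down with the possible $H^1(\mathfrak l_-,\mathfrak z)_{k-1}$-component of $\phi_{U\mathfrak l}(Y^{-1},\cdot)$, which the statement also excludes.

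The paper kills these $\mathfrak z$-parts by a mechanism you do not mention and which is not generic Hochschild--Serre bookkeeping. After normalizing, the coupling equations read $\partial_0 A^{Y^{-1}}=[Y^{-1},\zeta_{\mathfrak z}(\cdot,\cdot)]$ (step R6) and $\partial_0 A^{Y^{-1},Z^{-1}}=[Y^{-1},\xi_{\mathfrak z}^{Z^{-1}}]-[Z^{-1},\xi_{\mathfrak z}^{Y^{-1}}]$ (step R5), with $A$ valued in $U$. The right-hand sides have image of dimension $\leq 1$ (resp.\ $\leq 2$) because $[U_{-1},\mathfrak z]$ is a line. The crucial input is the pair of technical Lemmas \ref{lem: cohomology lemma 1} and \ref{lem: cohomology lemma 2}, specific to types $(B_m,\alpha_{m-1},\alpha_m)$ and $(F_4,\alpha_2,\alpha_3)$: if $\partial_0 A$ has image of dimension $\leq 2$ (resp.\ $\leq 1$), then $\partial_0 A=0$. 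These force both sides to vanish, hence $\zeta_{\mathfrak z}=0$ and $\xi_{\mathfrak z}^{Y^{-1}}=0$. Without these rank lemmas your normalization of $\phi_{U\mathfrak l}$ does not yield an honest $\mathfrak l_-$-cocycle and the conclusion that $\zeta$ lands in $U$-valued cohomology does not follow.
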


 We postpone the proof of  Proposition \ref{prop: computation of H2 cohomology} until Section \ref{sect: proof of proposition} and proceed with the computation of the  cohomology $H^2(\frak m, \frak g)$.

\begin{lemma} [Computation of $H^1(\frak l_-, \frak g)$] \label{lem:H1 vanishing semisimple} Let $k \geq 1$.

\begin{enumerate}
\item[(i)] $H^1(\frak l_-, \frak l)_{k-1}$ vanishes except for $k=1$ of type $(B_3, \alpha_{2}, \alpha_3 )$ and $$H^1(\frak l_-, \frak l)_{0} \subset \Hom(\frak l_{-1}, \frak l_{-1}).$$
\item[(ii)] $H^1(\frak l_-, \frak z)_{k-1}$ vanishes except for $k =2$ and $H^1(\frak l_-,\frak z)_1 \subset \Hom(\frak l_{-1}, \frak z)$.
\item[(iii)] $H^1(\frak l_-, U)_{k-1}$ vanishes except for $k =1$ and $$H^1(\frak l_-, U)_0\subset \Hom( \frak l_{-1}, U_{-1}).$$
\end{enumerate}

\end{lemma}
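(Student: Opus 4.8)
The plan is to compute each of the three groups separately by passing to irreducible coefficient modules and applying Kostant's theorem. Since $H^1(\frak l_-, -)$ is additive over direct sums and $\frak g = \frak l \oplus \frak z \oplus U$ splits as a sum of $\frak l$-modules — the adjoint module $\frak l$, the trivial module $\frak z$, and the irreducible module $U = V_{\varpi_m}$ (resp.\ $V_{\varpi_4}$) — the three parts (i), (ii), (iii) correspond to the irreducible coefficient modules $V_\lambda$ with $\lambda = \theta$ (the highest root), $\lambda = 0$, and $\lambda = \varpi_m$ (resp.\ $\varpi_4$). The gradation of $\frak l$ is the one associated with the single simple root $\alpha$ ($\alpha_{m-1}$ for $B_m$, $\alpha_2$ for $F_4$), so $\frak l_0$ is the Levi factor of the maximal parabolic and $\frak l_-$ is its opposite nilradical; Kostant's theorem (\cite{Ko61}) gives, as $\frak l_0$-modules,
$$H^q(\frak l_-, V_\lambda) = \bigoplus_{w \in W^{\frak p},\ \ell(w) = q} V_{\frak l_0}(w \cdot \lambda), \qquad w \cdot \lambda = w(\lambda + \rho) - \rho.$$
For a maximal parabolic the unique minimal coset representative of length one is the simple reflection $s_\alpha$, so each $H^1(\frak l_-, V_\lambda)$ is a single irreducible $\frak l_0$-module $V_{\frak l_0}(s_\alpha \cdot \lambda)$, with $s_\alpha \cdot \lambda = \lambda - (\langle \lambda, \alpha^\vee \rangle + 1)\alpha$. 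As its harmonic representative is dual to the simple negative root space $\frak l_{-\alpha} \subset \frak l_{-1}$, this class is automatically supported in $\Hom(\frak l_{-1}, \cdot)$, which gives all the asserted containments; it remains to determine the single grading degree in each case.

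Because $E_\alpha$ is central in $\frak l_0$, it acts on $V_{\frak l_0}(s_\alpha \cdot \lambda)$ by a scalar, which is the grading degree $d = k - 1$ of the class. For coefficients in $\frak l$ and $\frak z$ the grading element is $E_\alpha$ itself, and (with the sign normalized by the explicit identification $H^1(\frak l_-, \frak z) \cong \frak l_{-1}^*$) one finds
$$d = \langle \lambda, \alpha^\vee \rangle + 1 - \langle \lambda, E_\alpha \rangle,$$
where $\langle \lambda, E_\alpha \rangle$ is the coefficient of $\alpha$ when $\lambda$ is written in the simple-root basis. For $\lambda = 0$ this gives $d = 1$, so $H^1(\frak l_-, \frak z)$ is nonzero only for $k = 2$ and lies in $\Hom(\frak l_{-1}, \frak z)$, proving (ii). For the adjoint module $\lambda = \theta$ I would read off $\theta = \varpi_2$ for $B_m$ (so $\langle \theta, \alpha_{m-1}^\vee \rangle = \delta_{2,m-1}$ and the $\alpha_{m-1}$-coefficient of $\theta$ is $2$) and $\theta = \varpi_1$ for $F_4$, obtaining $d = \delta_{2,m-1} - 1$ and $d = -2$ respectively; hence $d < 0$ in all cases except $B_3$, where $\langle \theta, \alpha_2^\vee \rangle = 1$ forces $d = 0$ and the sole class sits in $\Hom(\frak l_{-1}, \frak l_{-1})$. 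Since $k \geq 1$ means $d \geq 0$, this is exactly the vanishing and the single exception in (i).

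For $U$ the point is that its grading is not the bare $\alpha$-grading: the central $\frak z$ acts on $U$ by a scalar, shifting the $\alpha$-grading by the constant $c$ needed to make $U$ begin in degree $-1$, namely $c = \tfrac{m-3}{2}$ for $B_m$ and $c = 1$ for $F_4$. Thus $d = \langle \varpi, \alpha^\vee \rangle + 1 - \langle \varpi, E_\alpha \rangle + c$; using $\langle \varpi_m, \alpha_{m-1}^\vee \rangle = 0$ and the $\alpha_{m-1}$-coefficient $\tfrac{m-1}{2}$ of $\varpi_m$ (and the analogous values for $F_4$), the $E_\alpha$-part $\tfrac{3-m}{2}$ (resp.\ $-1$) is cancelled by $c$, giving $d = 0$. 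Hence $H^1(\frak l_-, U)$ is nonzero only for $k = 1$ and lies in $\Hom(\frak l_{-1}, U_{-1})$, which is (iii).

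The main obstacle is the grading bookkeeping rather than the representation theory. Two points require care: fixing the sign relating Kostant's weight $s_\alpha \cdot \lambda$ to the $E_\alpha$-eigenvalue of the cochain in the complex $\Hom(\wedge^\bullet \frak l_-, V_\lambda)$ — which I pin down once using the explicit class $\frak l_{-1}^* \cong H^1(\frak l_-, \frak z)$ — and correctly incorporating the central shift $c$ for the module $U$, which is precisely what moves the degree from the fractional value predicted by the pure $\alpha$-grading to the integer $0$. The remaining input is the routine, case-by-case evaluation of the Dynkin labels $\langle \lambda, \alpha^\vee \rangle$ and of the $\alpha$-coefficients of $\theta$ and $\varpi$, together with the observation that $m = 3$ is the only case producing the exceptional class in (i).
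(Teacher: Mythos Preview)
Your proposal is correct and follows exactly the approach the paper indicates: the paper's own proof of this lemma (together with Lemma~\ref{lem:H2 vanishing semisimple}) is the single sentence ``Use Kostant's theory (\cite{Ko61}),'' and you have carried out that computation explicitly. Your identification of the degree via the $E_\alpha$-eigenvalue of the unique length-one Kostant class $V_{\frak l_0}(s_\alpha\cdot\lambda)$, together with the central shift $c$ on $U$, yields the correct values in each case, and your observation that the harmonic representative lies in $\frak l_{-1}^*\otimes V$ (hence the $\frak l_0$-orbit does too, since $\frak l_0$ preserves the grading) correctly justifies the containments; for (ii) and (iii) these containments are in fact forced already by the grading alone, since $\frak z$ and $U$ have no pieces in the required negative degree.
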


\begin{lemma} [Computation of $H^2(\frak l_-, \frak g)$] \label{lem:H2 vanishing semisimple}
 Let $k \geq 1$.

\begin{enumerate}

\item[(i)] $H^2(\frak l_-, \frak l)_k $ vanishes.

\item[(ii)] $H^2(\frak l_-, \frak z)_k  $ vanishes except for $k=2$, and
$$H^2(\frak l_-, \frak z)_2 \subset \wedge^2 \frak l_{-1}^* \otimes \frak z \subset (\wedge^2 \frak l_{-1} \otimes U_{-1})^*\otimes U_{-1} $$

\item[(iii)]
\begin{enumerate}
\item[(1)] If $(\frak m,\frak g_0)$ is of type $(B_m, \alpha_{m-1}, \alpha_m )$, then  $H^{ 2}(\frak l_-, U)_k$  vanishes except for $k=1,2$,  and we have
    \begin{eqnarray*}
    H^2(\frak l_-,  U)_1 &\subset& \wedge^2 \frak l_{-1}^* \otimes U_{-1} \\
    H^2(\frak l_-, U)_2 & \subset& \wedge^2 \frak l_{-1}^* \otimes U_0 \subset (\wedge ^2 \frak  l_{-1}  \otimes \mathfrak l_{-1})^* \otimes U_{-1} \\
    \end{eqnarray*}
\item[(2)] If $(\frak m,\frak g_0)$ is of type $(F_4, \alpha_2, \alpha_3 )$, then $H^{ 2}(\frak l_-, U)_k$   vanishes except for $k=1$, and we have
    \begin{eqnarray*}
    H^2(\frak l_-, U)_1 &\subset& \wedge^2 \frak l_{-1}^* \otimes U_{-1}
    \end{eqnarray*}
\end{enumerate}
\end{enumerate}
\end{lemma}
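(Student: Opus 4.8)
The plan is to split the computation according to the $\mathfrak l_-$-module structure of $\mathfrak g$ and then invoke Kostant's theorem piece by piece. As a module over $\mathfrak l_-\subset\mathfrak l$ (acting by the bracket of $\mathfrak g$), the decomposition $\mathfrak g=\mathfrak l\oplus\mathfrak z\oplus U$ is a decomposition into $\mathfrak l_-$-submodules: $\mathfrak z$ is central, hence a trivial summand; $U$ is an ideal, hence $\mathfrak l_-$-invariant; and $\mathfrak l$ is a subalgebra. This splitting respects the $E_\alpha$-gradation, so
\begin{eqnarray*}
H^2(\mathfrak l_-,\mathfrak g)_k=H^2(\mathfrak l_-,\mathfrak l)_k\oplus H^2(\mathfrak l_-,\mathfrak z)_k\oplus H^2(\mathfrak l_-,U)_k,
\end{eqnarray*}
and it suffices to treat the three summands separately. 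This is exactly what items (i), (ii), (iii) record.

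Next I would set up Kostant's theorem. Here $\mathfrak l_-=\bigoplus_{p<0}\mathfrak l_p$ is the negative nilradical of the maximal parabolic $\mathfrak p=\mathfrak p^{\alpha}$ attached to the single simple root $\alpha$ ($=\alpha_{m-1}$ for $B_m$, $=\alpha_2$ for $F_4$), with Levi factor $\mathfrak l_0$. For an irreducible $\mathfrak l$-module $\Gamma$ of highest weight $\lambda$, Kostant's theorem (\cite{Ko61}) gives, as $\mathfrak l_0$-modules,
\begin{eqnarray*}
H^q(\mathfrak l_-,\Gamma)\cong\bigoplus_{w\in\mathcal W^{\mathfrak p},\,\ell(w)=q}\Gamma^{\mathfrak l_0}_{\mu_w},
\end{eqnarray*}
where $\mathcal W^{\mathfrak p}$ is the set of minimal coset representatives, $\Gamma^{\mathfrak l_0}_{\mu_w}$ is the irreducible $\mathfrak l_0$-module whose highest weight $\mu_w$ is produced from $\lambda$ and $w$ by the dotted action $w\cdot\lambda=w(\lambda+\rho)-\rho$ (with the normalization appropriate to the negative nilradical). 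Each summand sits in a single homogeneity, namely the $E_\alpha$-eigenvalue $k=\mu_w(E_{\alpha})$, equivalently the $\alpha$-coefficient of $\mu_w$ written in simple roots. For $q=2$ the relevant $w$ has inversion set $\Phi_w=\{\beta_1,\beta_2\}$ consisting of two positive nilradical roots, and the summand is realized by a cochain supported on $\mathfrak l_{-\beta_1}\wedge\mathfrak l_{-\beta_2}$; when both $\beta_i$ have $\alpha$-degree $1$ the class lands in $\wedge^2\mathfrak l_{-1}^*\otimes\Gamma$, which is the origin of the inclusions in (ii) and (iii).

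With this in place the computation becomes an enumeration. For (i) I take $\Gamma=\mathfrak l$ (adjoint, $\lambda$ the highest root); for (ii) $\Gamma=\mathfrak z$ (trivial, $\lambda=0$, so $\mu_w=\sum_{\beta\in\Phi_w}\beta$ and $k=\deg_\alpha\beta_1+\deg_\alpha\beta_2$); and for (iii) $\Gamma=U$ (highest weight $\varpi_m$ for $B_m$, $\varpi_4$ for $F_4$). In each case I list the length-$2$ elements of $\mathcal W^{\mathfrak p}$, compute $\mu_w$ and its $\alpha$-degree, and sort the summands by $k$. I expect this to show that the adjoint contributes nothing in degrees $k\geq1$; that the trivial module contributes only in degree $k=2$, forced into $\wedge^2\mathfrak l_{-1}^*\otimes\mathfrak z$ because only pairs of $\alpha$-degree-$1$ roots occur at length $2$; and that $U$ contributes the listed pieces, with the extra degree-$2$ term $H^2(\mathfrak l_-,U)_2\subset\wedge^2\mathfrak l_{-1}^*\otimes U_0$ (reinterpreted via $U_0\subset\Hom(\mathfrak l_{-1},U_{-1})$) appearing for $B_m$ but not for $F_4$. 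The low-rank anomaly recorded in Lemma \ref{lem:H1 vanishing semisimple} (the $B_3$ case) should fall out of the same enumeration, since for small $m$ some coset representatives coincide or change length.

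The main obstacle is precisely this Weyl-combinatorial bookkeeping: correctly identifying $\mathcal W^{\mathfrak p}$ through length $2$ for the maximal parabolics $\mathfrak p^{\alpha_{m-1}}\subset B_m$ (uniformly in $m$, together with the degenerate small cases) and $\mathfrak p^{\alpha_2}\subset F_4$, and then computing the weights $\mu_w$ with their $E_\alpha$-eigenvalues and their $\mathfrak l_0=\mathfrak{sl}(V)\oplus\mathfrak{sl}(W)\oplus\mathfrak z$-module types. I would cross-check each claimed nonvanishing summand against the tables of Remark \ref{pqrst} and the explicit $\mathfrak l_0$-module descriptions of $\mathfrak l_{-1}$ and $U_{-1}$ in Proposition \ref{vmrt of X}, both to pin down the sign normalization of Kostant's action for the negative nilradical and to confirm the stated inclusions $H^2(\mathfrak l_-,\mathfrak z)_2\subset\wedge^2\mathfrak l_{-1}^*\otimes\mathfrak z$, $H^2(\mathfrak l_-,U)_1\subset\wedge^2\mathfrak l_{-1}^*\otimes U_{-1}$, and $H^2(\mathfrak l_-,U)_2\subset\wedge^2\mathfrak l_{-1}^*\otimes U_0$.
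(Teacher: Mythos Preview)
Your proposal is correct and takes essentially the same approach as the paper, which simply says ``Use Kostant's theory (\cite{Ko61}).'' You have supplied the details the paper omits: the splitting $H^2(\mathfrak l_-,\mathfrak g)_k=H^2(\mathfrak l_-,\mathfrak l)_k\oplus H^2(\mathfrak l_-,\mathfrak z)_k\oplus H^2(\mathfrak l_-,U)_k$, the enumeration of length-$2$ elements of $\mathcal W^{\mathfrak p}$, and the identification of the $E_\alpha$-degree of each summand; this is exactly the computation the paper leaves to the reader.
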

\begin{proof} Use Kostant's theory (\cite{Ko61}).

\end{proof}

We remark that, in the proof of Proposition \ref{prop: computation of H2 cohomology}, we use the property that both $H^1(\frak l_-, \frak l)_{k-1}$ and $H^2(\frak l_-, \frak l)_k$ vanish for any $k \geq 1$ (Lemma \ref{lem:H1 vanishing semisimple} (i) and Lemma \ref{lem:H2 vanishing semisimple} (i)).

\begin{proposition} [Computation of $H^2(\frak m, \frak g)$]  \label{prop:H2 vanishing}
 Let $k \geq 1$.  %$\,$
\begin{enumerate}
\item If $(\frak m,\frak g_0)$ is of type $(B_3, \alpha_{2}, \alpha_3 )$,  then $H^{ 2}(\frak m, \frak g)_k$ vanishes except for $k=1,2$, and
    \begin{eqnarray*}
    H^2(\frak m, \frak g)_1 &\subset& \wedge^2 \frak g_{-1}^* \otimes \frak g_{-1} \\
    H^2(\frak m, \frak g)_2 &\subset& \wedge^2 \frak g_{-1}^* \otimes U_0  \subset (\wedge ^2 \frak g_{-1}  \otimes \mathfrak g_{-1})^* \otimes U_{-1}.
    \end{eqnarray*}
\item If $(\frak m,\frak g_0)$ is of type $(B_m, \alpha_{m-1}, \alpha_m )$, where $m >3$,  then $H^{ 2}(\frak m, \frak g)_k$ vanishes except for $k=1,2$, and
    \begin{eqnarray*}
    H^2(\frak m, \frak g)_1 &\subset& \wedge^2 \frak g_{-1}^* \otimes U_{-1} \\
    H^2(\frak m, \frak g)_2 &\subset& \wedge^2 \frak g_{-1}^* \otimes U_0  \subset (\wedge ^2 \frak g_{-1}  \otimes \mathfrak g_{-1})^* \otimes U_{-1}.
    \end{eqnarray*}
\item If $(\frak m,\frak g_0)$ is of type $(F_4, \alpha_2, \alpha_3 )$, then $H^{ 2}(\frak m, \frak g)_k$ vanishes except for $k=1 $, and
  %  $$H^2(\frak m, \frak g)_1 \subset \wedge^2 \frak g_{-1}^* \otimes U_{-1} .$$
    \begin{eqnarray*}
    H^2(\frak m, \frak g)_1 &\subset& \wedge^2 \frak g_{-1}^* \otimes U_{-1}% \\
   % H^2(\frak m, \frak g)_2 &\subset& \wedge^2 \frak g_{-1}^* \otimes \frak z \subset (\wedge ^2 \frak g_{-1}  \otimes \mathfrak g_{-1})^* \otimes U_{-1}.
    \end{eqnarray*}
\end{enumerate}

\end{proposition}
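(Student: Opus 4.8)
The plan is to obtain Proposition \ref{prop:H2 vanishing} as a formal consequence of the reduction in Proposition \ref{prop: computation of H2 cohomology}, into which I feed the semisimple computations of Lemma \ref{lem:H1 vanishing semisimple} and Lemma \ref{lem:H2 vanishing semisimple}. By Proposition \ref{prop: computation of H2 cohomology}, every class in $H^2(\mathfrak m, \mathfrak g)_k$ has a \emph{unique} representative $\zeta \in \Hom(\wedge^2 \mathfrak g_{-1}, U)_k$, and $\zeta$ is completely determined by three pieces of data attached to the splitting $\mathfrak g_{-1} = \mathfrak l_{-1} \oplus U_{-1}$: the class $\zeta|_{\wedge^2 \mathfrak l_-}$ in $H^2(\mathfrak l_-, U)_k$; for each $Y^{-1} \in U_- = U_{-1}$ the class $\zeta(Y^{-1}, \cdot)|_{\mathfrak l_-}$ in $H^1(\mathfrak l_-, U)_{k-1} + H^1(\mathfrak l_-, \mathfrak l)_{k-1}$; and the class $\zeta|_{\wedge^2 U_-}$ in $H^2(U_-, U)_1$. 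By uniqueness, the whole class vanishes as soon as these three pieces vanish, and in the surviving degrees they cut out exactly the ambient modules asserted in the statement.

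First I would carry out the degree bookkeeping. Inserting Lemma \ref{lem:H2 vanishing semisimple}(iii) into the first piece shows $\zeta|_{\wedge^2 \mathfrak l_-}$ can be nonzero only for $k \in \{1,2\}$ in the $B_m$ cases---lying in $\wedge^2 \mathfrak l_{-1}^* \otimes U_{-1}$ when $k=1$ and in $\wedge^2 \mathfrak l_{-1}^* \otimes U_0$ when $k=2$---and only for $k=1$ in the $F_4$ case. Inserting Lemma \ref{lem:H1 vanishing semisimple}(iii),(i) into the second piece, $H^1(\mathfrak l_-, U)_{k-1}$ survives only at $k=1$ (inside $\Hom(\mathfrak l_{-1}, U_{-1})$), while $H^1(\mathfrak l_-, \mathfrak l)_{k-1}$ vanishes for every $k \geq 1$ except in type $(B_3, \alpha_2, \alpha_3)$ at $k=1$, where it lies in $\Hom(\mathfrak l_{-1}, \mathfrak l_{-1})$. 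The third piece is required to lie in the degree-$1$ component $H^2(U_-, U)_1$, hence vanishes whenever $k \neq 1$. Thus for $k \geq 3$ in the $B_m$ cases and for $k \geq 2$ in the $F_4$ case all three pieces vanish and $H^2(\mathfrak m, \mathfrak g)_k = 0$, while in the remaining degrees the three pieces assemble: at $k=1$ they are all $U_{-1}$-valued, placing the class in $\wedge^2 \mathfrak g_{-1}^* \otimes U_{-1}$, and at $k=2$ only the first piece survives, placing the class in $\wedge^2 \mathfrak g_{-1}^* \otimes U_0$, which embeds into $(\wedge^2 \mathfrak g_{-1} \otimes \mathfrak g_{-1})^* \otimes U_{-1}$ via the bracket $U_0 \otimes \mathfrak g_{-1} \to U_{-1}$.

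The one input not covered by Kostant's theory is the abelian piece $H^2(U_-, U)_1$. Because $U_- = U_{-1}$ and $[U_{-1}, U_{-1}] \subset [U_{-1}, \mathfrak m] = 0$ (from the proof of Proposition \ref{fundamental graded Lie algebra of vmrt}), the subalgebra $U_-$ is abelian, so I would compute $H^2(U_-, U)_1$ directly from the Chevalley--Eilenberg differential, checking that its contribution sits in the $\wedge^2 U_{-1}^* \otimes U_{-1}$ summand of the stated space, consistently with degree $1$. Along the way I must also confirm that the $\mathfrak z$-valued and $\mathfrak l$-valued semisimple cohomologies appearing in Lemma \ref{lem:H2 vanishing semisimple}(i),(ii) are absorbed into the exact term $\partial \eta$ rather than into $\zeta$, so that they do not enlarge the answer.

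I expect the genuine obstacle to be the assembly step rather than any single cohomology group, and the sharpest point is type $(B_3, \alpha_2, \alpha_3)$. There the non-vanishing of $H^1(\mathfrak l_-, \mathfrak l)_0 \subset \Hom(\mathfrak l_{-1}, \mathfrak l_{-1})$ feeds an extra $\mathfrak l_{-1}$-valued term into the second piece, and this is precisely what forces the ambient module in case (1) to be enlarged from $U_{-1}$ to $\mathfrak g_{-1} = \mathfrak l_{-1} \oplus U_{-1}$. Verifying that this term genuinely survives for $B_3$, that it does \emph{not} appear for $m > 3$ (where Lemma \ref{lem:H1 vanishing semisimple}(i) gives vanishing) nor for $F_4$, and that it does not disturb the degree-$2$ computation, is the delicate bookkeeping that separates the three cases of the proposition.
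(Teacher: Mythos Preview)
Your proposal is correct and follows essentially the same route as the paper: invoke the reduction of Proposition \ref{prop: computation of H2 cohomology}, feed in the semisimple computations of Lemmas \ref{lem:H1 vanishing semisimple} and \ref{lem:H2 vanishing semisimple}, and read off the surviving degrees by bookkeeping on the three pieces of $\zeta$, with the uniqueness of $\zeta$ yielding the asserted inclusions. The only small addition in the paper you omit is the remark, in the $B_3$ case, that $\dim\mathfrak l_{-2}=1$ forces any $\Hom(\mathfrak l_{-2},\mathfrak l_{-2})$-component of the $\mathfrak l$-valued piece to be a scalar and hence a coboundary, ensuring the representative sits in $\Hom(\mathfrak l_{-1},\mathfrak l_{-1})\subset \wedge^2\mathfrak g_{-1}^*\otimes\mathfrak g_{-1}$; your worry about absorbing the $\mathfrak z$- and $\mathfrak l$-valued $H^2$-pieces into $\partial\eta$ is unnecessary, since Proposition \ref{prop: computation of H2 cohomology} already asserts $\zeta|_{\wedge^2\mathfrak l_-}\in H^2(\mathfrak l_-,U)_k$.
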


\begin{proof}
By Proposition \ref{prop: computation of H2 cohomology} and Lemma \ref{lem:H1 vanishing semisimple} and Lemma \ref{lem:H2 vanishing semisimple}, $H^{ 2}(\frak m, \frak g)_k$ vanishes for any $ k \geq 3$. By the uniqueness of $\zeta$ in the expression $\phi = \partial \eta + \zeta$ in Proposition \ref{prop: computation of H2 cohomology}, the nonvanishing $H^2(\frak m, \frak g)_k$ can be thought of as a subspace of the spaces in the right hand side.  In the first case, we can regards $\zeta_{\frak l}({Y^{-1}}, \cdot ) \in \Hom(\frak l_{-1},\frak l_{-1})$ because $\dim \frak l_{-2} =1$ implies that $\zeta_{\frak l}({Y^{-1}}, \cdot )\in \Hom(\frak l_{-2},\frak l_{-2})$ is just a constant multiple which comes from a boundary.
\end{proof}

 \subsection{Technical Lemmata} \label{sect:technical lemma}
We denote the restriction of $\partial$ to $\frak l_-$ by $\partial_0$, so that we have a subcomplex
 \begin{eqnarray*}
 0 \stackrel{\partial_0}{\longrightarrow} \frak g \stackrel{\partial_0}{\longrightarrow} \Hom( \frak l_-, \frak g ) \stackrel{\partial_0}{\longrightarrow} \Hom(\wedge^2 \frak l_-  ,\frak g   ) \stackrel{\partial_0}{\longrightarrow} \dots
 \end{eqnarray*}

 Recall that in the proof of the vanishing of $H^1(\frak m, \frak g)_k$ for positive $k$ (Proposition 48 of \cite{Ki}) a main difficulty is to show that  the nonvanishing  cohomology $H^1(\frak l_-, \mathfrak z)_1$ does not contribute to the cohomology $H^1(\frak m, \frak g)_1$, and a crucial Lemma is   the following.

\begin{lemma} [Lemma 27 (3) of \cite{Ki}] \label{lem: cohomology lemma 1 old version} For  $A \in U_0$, if the image of $\partial_0 A: \mathfrak l_{-1} \rightarrow U_{-1}$ has dimension $\leq 1$, then we have $\partial_0A=0$.
\end{lemma}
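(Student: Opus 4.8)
The plan is to work with the explicit $\mathfrak l_0$-module descriptions recorded in Proposition \ref{vmrt of X} and in the proof of Proposition \ref{fundamental graded Lie algebra of vmrt}, namely
\[
\mathfrak l_{-1} = \Sym^2 V \otimes W, \qquad U_{-1} = V, \qquad U_0 = V^* \otimes W^*,
\]
where $\dim V \geq 2$ (indeed $\dim V =2$ for $B_m$ and $\dim V = 3$ for $F_4$). Since $U$ is a graded $\mathfrak l$-module with $\mathfrak l_p\cdot U_q \subset U_{p+q}$, the coboundary of the $0$-cochain $A \in U_0$ restricted to $\mathfrak l_{-1}$ is exactly the action map $\partial_0 A \colon \mathfrak l_{-1} \to U_{-1}$, $X \mapsto [X,A]$. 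My goal is to show that this map is either zero or surjective onto $U_{-1}=V$; as $\dim V \geq 2$, this dichotomy is precisely what the statement asserts.

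First I would pin down the action map up to a nonzero scalar. The space of $\mathfrak l_0$-equivariant maps $\mathfrak l_{-1}\otimes U_0 \to U_{-1}$ is
\[
\Hom_{SL(V)\times SL(W)}\big(\Sym^2 V \otimes V^* \otimes (W\otimes W^*),\, V\big).
\]
Decomposing $W \otimes W^* = \mathbb C \oplus \mathfrak{sl}(W)$ and using that $V$ is $SL(W)$-trivial kills the adjoint summand, while $\Sym^2 V \otimes V^*$ contains $V$ with multiplicity one, since $\Hom(\Sym^2 V \otimes V^*, V) = \Hom(\Sym^2 V, V\otimes V) = \Hom(\Sym^2 V, \Sym^2 V \oplus \wedge^2 V)=\mathbb C$. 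Hence this Hom-space is one-dimensional, and as the bracket $[\mathfrak l_{-1}, U_0]=U_{-1}$ is nonzero (proof of Proposition \ref{vmrt of X}), the action is a nonzero multiple of the natural contraction. Concretely, viewing $A$ as a linear map $W \to V^*$, I would write
\[
\partial_0 A(v_1 v_2 \otimes w) = c\big(\langle A(w), v_1\rangle\, v_2 + \langle A(w), v_2\rangle\, v_1\big), \qquad c \neq 0.
\]

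The heart of the matter is the elementary claim that for every nonzero $\phi \in V^*$ the symmetric contraction $S_\phi \colon \Sym^2 V \to V$, $v_1 v_2 \mapsto \phi(v_1) v_2 + \phi(v_2) v_1$, is surjective. Choosing $e \in V$ with $\phi(e)=1$, one computes $S_\phi(e^2) = 2e$ and $S_\phi(e\cdot u) = u + \phi(u) e$ for every $u \in V$, so $u = S_\phi\big(e\cdot u - \tfrac12 \phi(u)\, e^2\big)$ lies in the image. I expect this surjectivity to be the only genuinely nontrivial point; everything else is bookkeeping.

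Finally I would run the contrapositive. Suppose $A \neq 0$. Then $A \colon W \to V^*$ is a nonzero map, so some $w_0 \in W$ has $\phi := A(w_0) \neq 0$. Restricting $\partial_0 A$ to $\Sym^2 V \otimes \langle w_0\rangle$ yields $c\, S_\phi$, whose image is all of $V = U_{-1}$ by the surjectivity step, so $\dim \operatorname{Im}(\partial_0 A) = \dim V \geq 2$. Therefore, if the image of $\partial_0 A \colon \mathfrak l_{-1} \to U_{-1}$ has dimension $\leq 1$, then $A = 0$, whence $\partial_0 A = 0$.
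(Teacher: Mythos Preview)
The paper does not give its own proof of this lemma; it is quoted as Lemma 27~(3) of \cite{Ki} and then strengthened (also without proof) in Lemma~\ref{lem: cohomology lemma 1}. Your argument is correct and self-contained for the $B_m$ and $F_4$ cases under consideration: the Schur-type computation identifying the bracket $\mathfrak l_{-1}\otimes U_0 \to U_{-1}$ with the natural contraction, followed by the elementary surjectivity of $S_\phi\colon \Sym^2 V \to V$ for $\phi\neq 0$, in fact establishes the sharper dichotomy that $\partial_0 A$ is either zero or surjective onto $U_{-1}=V$. In particular, in the $F_4$ case (where $\dim V = 3$) your argument already delivers the improved Lemma~\ref{lem: cohomology lemma 1} as well.
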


Under the assumption that $X$ is either of type $(B_m, \alpha_{m-1}, \alpha_m)$   or of type $(F_4, \alpha_2, \alpha_3)$, Lemma \ref{lem: cohomology lemma 1 old version}  can be improved.

\begin{lemma}   \label{lem: cohomology lemma 1} For  $A \in U_0$, if the image of $\partial_0 A: \mathfrak l_{-1} \rightarrow U_{-1}$ has dimension $\leq     2$, then we have $\partial_0A=0$.
\end{lemma}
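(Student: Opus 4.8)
The plan is to make the coboundary $\partial_0 A$ completely explicit and then reduce the statement to a one-line determinant computation. Recall from Section \ref{horospherical varieties} that as $L_0$-modules $U_0 = V^* \otimes W^*$, $\mathfrak l_{-1} = \Sym^2 V \otimes W$ and $U_{-1} = V$, and that $\partial_0 A(X) = [X,A]$ for $X \in \mathfrak l_{-1}$ (the restriction to $\mathfrak l_{-2}$ is automatically zero since $U_{-2}=0$). First I would observe that the bilinear map $(X,A) \mapsto [X,A]$ is $L_0$-equivariant, hence lies in $\Hom_{L_0}(\Sym^2 V \otimes W \otimes V^* \otimes W^*, V)$. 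Since $V$ is $SL(W)$-trivial, the factor $W \otimes W^*$ must enter through the contraction $W \otimes W^* \to \mathbb C$, and since $V$ occurs with multiplicity one in $\Sym^2 V \otimes V^*$ the remaining factor is the contraction $\Sym^2 V \otimes V^* \to V$. Thus, identifying $A \in V^* \otimes W^*$ with a bilinear form on $V \times W$, one gets (up to a nonzero scalar, normalized to $1$)
\[
\partial_0 A(v \cdot v' \otimes w) = A(v,w)\, v' + A(v',w)\, v, \qquad v,v' \in V,\ w \in W.
\]
The scalar is nonzero because $[\mathfrak l_{-1}, U_0]$ does not vanish, which follows from a direct weight computation or from the fact (Lemma \ref{tangent space of the cone V-2}) that $\mathfrak m$ is fundamental.

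The heart of the argument is the dichotomy that $\partial_0 A$ is either zero or surjective onto $U_{-1}$. Suppose $A \neq 0$ and choose $v_0 \in V$, $w_0 \in W$ with $A(v_0,w_0) \neq 0$; set $\lambda := A(\,\cdot\,,w_0) \in V^*$, so that $\lambda(v_0) \neq 0$. Restricting $\partial_0 A$ to the vectors $v_0 \cdot v' \otimes w_0$ yields the endomorphism
\[
M \colon V \to V, \qquad M(v') = \lambda(v_0)\, v' + \lambda(v')\, v_0 = \bigl(\lambda(v_0)\,\mathrm{Id}_V + v_0 \otimes \lambda\bigr)(v').
\]
By the rank-one determinant formula $\det(c\,\mathrm{Id}_V + v_0 \otimes \lambda) = c^{\,\dim V - 1}\bigl(c + \lambda(v_0)\bigr)$ with $c = \lambda(v_0)$, we obtain $\det M = 2\,\lambda(v_0)^{\dim V} \neq 0$. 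Hence $M$ is invertible, so the image of $\partial_0 A$ already contains $M(V) = V = U_{-1}$, i.e. $\partial_0 A$ is surjective. Consequently $\operatorname{rank}(\partial_0 A) \in \{0,\dim V\}$, with value $0$ exactly when $A = 0$.

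The lemma follows immediately: in type $(F_4, \alpha_2, \alpha_3)$ one has $\dim U_{-1} = \dim V = 3$, so the hypothesis $\operatorname{rank}(\partial_0 A) \leq 2 < 3$ is incompatible with surjectivity and forces $A = 0$, whence $\partial_0 A = 0$. In type $(B_m, \alpha_{m-1}, \alpha_m)$ one has $\dim V = 2$, and the very same dichotomy $\operatorname{rank}(\partial_0 A) \in \{0,2\}$ shows that rank $\leq 1$ already gives $A=0$, so this recovers Lemma \ref{lem: cohomology lemma 1 old version}; the genuine sharpening to the bound $2$ is the content in the $F_4$ case.

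I expect the only delicate point to be the equivariant identification of the bracket in the first step, namely verifying the multiplicity-one statement $V \hookrightarrow \Sym^2 V \otimes V^*$ and checking that the pairing is nonzero so the normalization is legitimate. Once the explicit formula is available, the surjectivity claim is a short determinant computation, and no representation-theoretic input beyond Schur's lemma is required.
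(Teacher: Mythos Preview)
Your argument is correct and complete for the $F_4$ case, and it takes a cleaner, more conceptual route than the root-vector computation the paper uses for the companion Lemma~\ref{lem: cohomology lemma 2} (the paper gives no separate proof of the present lemma, treating it as an evident strengthening of Lemma~\ref{lem: cohomology lemma 1 old version}). The $L_0$-equivariance indeed forces the bracket $\mathfrak l_{-1}\times U_0\to U_{-1}$ to be the natural contraction: the multiplicity-one statement $[\Sym^2V\otimes V^*:V]=1$ holds for both $\dim V=2$ and $\dim V=3$, and nonvanishing follows from the irreducibility of $U$ as an $\mathfrak l$-module (otherwise $\bigoplus_{k\ge 0}U_k$ would be $\mathfrak l_-$-stable). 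Your rank-one determinant computation then gives the clean dichotomy $\operatorname{rank}\partial_0A\in\{0,\dim V\}$, which settles $F_4$ with $\dim V=3$. This is a genuinely different and more transparent argument than the coordinate approach of Lemma~\ref{lem: cohomology lemma 2}.

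Your remark about $B_m$, however, deserves to be promoted from a parenthetical to an explicit warning: the lemma \emph{as stated} is false in that case. Since $\dim U_{-1}=\dim V=2$, the hypothesis ``image has dimension $\le 2$'' is vacuous, while your own formula exhibits $\partial_0A$ surjective whenever $A\neq 0$. So you have not merely failed to prove the $B_m$ case---you have produced a counterexample. You should say this outright: the sharpened bound $\le 2$ holds only for $(F_4,\alpha_2,\alpha_3)$, while for $(B_m,\alpha_{m-1},\alpha_m)$ the correct statement is the old bound $\le 1$ of Lemma~\ref{lem: cohomology lemma 1 old version}. This matters downstream, because the application in step~R5 of the proof of Proposition~\ref{prop: computation of H2 cohomology} invokes the bound $\le 2$ uniformly in both types; in the $B_m$ case that step needs a separate justification.
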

Similarly, we have the following Lemma.

\begin{lemma} \label{lem: cohomology lemma 2}
For $A \in \Hom(\frak l_-, U)_1$, if the image of $\partial_0A: \wedge^2 \frak l_{-1} \rightarrow U_{-1}$ has dimension $\leq 1$, then we have $\partial_0 A=0$.

\end{lemma}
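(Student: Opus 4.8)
The plan is to reduce the statement to the vanishing of a single explicit linear map and then to rule out rank one by exploiting the product structure $\mathfrak{l}_{-1}=\Sym^2 V\otimes W$ together with the mechanism behind Lemma \ref{lem: cohomology lemma 1}. First I would observe that for $A\in\Hom(\mathfrak{l}_-,U)_1$ only the two components $A_1:=A|_{\mathfrak{l}_{-1}}\colon\mathfrak{l}_{-1}\to U_0$ and $A_2:=A|_{\mathfrak{l}_{-2}}\colon\mathfrak{l}_{-2}\to U_{-1}$ can contribute to $\partial_0 A$: on $\wedge^2_j\mathfrak{l}_-$ the coboundary lands in $U_{j+1}$, which is $0$ as soon as $j\le -3$, so $\partial_0 A$ is supported on $\wedge^2\mathfrak{l}_{-1}$ and takes values in $U_{-1}$. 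Hence $\partial_0 A=0$ is equivalent to the vanishing of
$$\Phi(X,Y):=[X,A_1 Y]-[Y,A_1 X]-A_2[X,Y],\qquad X,Y\in\mathfrak{l}_{-1},$$
and the hypothesis says precisely that $\Phi\colon\wedge^2\mathfrak{l}_{-1}\to U_{-1}=V$ has rank $\le 1$. I would assume $\Phi\ne 0$, so that its image is a fixed line $\langle\xi\rangle\subset V$, and aim for a contradiction.

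The key device is to test $\Phi$ on the $W$-lines $\Sym^2 V\otimes\langle f\rangle$, $f\in W$. On such a plane the bracket $\mathfrak{l}_{-1}\wedge\mathfrak{l}_{-1}\to\mathfrak{l}_{-2}$ vanishes, since it factors through $f\wedge f=0$; thus the $A_2$-term drops and $\Phi$ restricts to a pure coboundary in $A_1$. Using the explicit contraction recorded in the proof of Lemma \ref{parallel transport of III-X case}, one has $[s^2\otimes f,A_1(t^2\otimes f)]=2\,B_f(t^2)(s)\,s$ with $B_f:=(\mathrm{id}\otimes\mathrm{ev}_f)\circ A_1(\,\cdot\,\otimes f)\colon\Sym^2 V\to V^*$, so that the restriction equals $2\big(B_f(t^2)(s)\,s-B_f(s^2)(t)\,t\big)$. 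By the same mechanism used in the proof of Lemma \ref{lem: cohomology lemma 1}, a nonzero $B_f$ forces this expression to span at least two directions in $V$ (the independent $s$- and $t$-directions cannot be absorbed into a single line), contradicting $\mathrm{Im}\,\Phi=\langle\xi\rangle$. I conclude $B_f=0$ for every $f$, i.e. the $\Sym^2 W^*$-part of $A_1$ is zero, so $A_1$ is antisymmetric in $W$.

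Finally I would run the rank test on the mixed slices spanned by $\Sym^2 V\otimes\langle f\rangle$ and $\Sym^2 V\otimes\langle g\rangle$ with $f\ne g$, where $[\,\cdot\,,\,\cdot\,]$ is proportional to $f\wedge g$ and the $A_2$-term returns. Setting $\gamma:=(\mathrm{id}\otimes\mathrm{ev}_f)\circ A_1(\,\cdot\,\otimes g)$, the diagonal $X=Y$ already forces the symmetric cubic $s\mapsto\gamma(s^2)(s)$ to vanish (its values must lie on $\langle\xi\rangle$ for all $s$), and with that the off-diagonal part collapses to a fixed vector $P_{fg}\in V$ scaled by $\langle s,t\rangle^2$ in the $B_m$ case, respectively by $A_2\big((s\wedge t)^2\big)$ in the $F_4$ case. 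Imposing $P_{fg}\in\langle\xi\rangle$ for all $f,g$ together with $SL(V)$-genericity then forces the surviving $\wedge^2 W$-part of $A_1$ and $A_2$ to vanish, whence $\Phi=0$. The main obstacle is exactly this last step: the $\wedge^2 W$-part of $A_1$ and the map $A_2$ enter the mixed terms in a coupled fashion, and one must show their combined contribution cannot be confined to a single line without both vanishing. This bookkeeping must be done separately for the two types, because $\mathfrak{l}_{-2}$ is $\wedge^2 W$ with $\dim V=2$ for $(B_m,\alpha_{m-1},\alpha_m)$ but $\Sym^2(\wedge^2 V)$ with $\dim V=3$ for $(F_4,\alpha_2,\alpha_3)$, so the $A_2$-term is a fixed vector scaled by $\langle s,t\rangle^2$ in the first case and a genuine quadratic map of $s\wedge t$ in the second.
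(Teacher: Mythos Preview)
Your approach diverges from the paper's at the very choice of abelian test subspaces: you fix $f\in W$ and vary over $\Sym^2 V\otimes\langle f\rangle$, while the paper fixes a rank-one square $v_i^2\in\Sym^2 V$ and varies over $v_i^2\otimes W$. On the latter slice the bracket in $\mathfrak l_{-1}$ also vanishes, and the decisive advantage is that $[v_i^2\otimes w,\,U_0]\subset\langle v_i\rangle$ automatically by weight reasons, so $\Phi$ restricted to that slice lands in a fixed line depending only on $i$. Having two (for $B_m$) or three (for $F_4$) such transverse lines then contradicts the rank-one hypothesis without any further analysis of $A_1$ or $A_2$.

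Your step 2 contains a genuine gap in the $B_m$ case. The claim that $B_f\ne 0$ forces the restriction of $\Phi$ to $\wedge^2(\Sym^2 V\otimes\langle f\rangle)$ to span two directions in $V$ is false. With $\dim V=2$ and basis $\{s,t\}$, take $B_f(s^2)=B_f(t^2)=0$ and $B_f(st)=s^*$. Using $\Phi(\sigma\otimes f,\tau\otimes f)=\iota_{B_f(\tau)}\sigma-\iota_{B_f(\sigma)}\tau$ one finds $\Phi(s^2,st)=2s$, $\Phi(t^2,st)=0$, $\Phi(s^2,t^2)=0$, so the image on this slice is exactly $\langle s\rangle$, one-dimensional, while $B_f\ne 0$. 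The appeal to ``the same mechanism as Lemma~\ref{lem: cohomology lemma 1}'' does not bridge this: that lemma bounds the rank of $X\mapsto[X,A]$ for a \emph{single} $A\in U_0$ acting on all of $\mathfrak l_{-1}$, whereas here you have a skew-symmetrized combination of varying $A_1$-values, which need not inherit the same rank dichotomy.

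You yourself flag step 3 as the main obstacle and leave it at the level of a sketch. With step 2 failing, the symmetric $W$-part of $A_1$ survives into the mixed-slice analysis as well, and the coupled system you describe becomes substantially harder than advertised. The paper's $V$-slicing sidesteps both difficulties because the weight structure of the $\mathfrak l_{-1}$-action on $U_0$ already pins $\Phi$ to eigenlines in $U_{-1}=V$, so no decomposition of $A_1$ is ever needed.
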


\begin{proof}
Let $\{x_{-\alpha}\}$ be a basis of $\frak l_{-1}$ consisting of root vectors.  We may assume that $[x_{-\alpha}. x_{-\beta}]=c_{\alpha \beta}x_{-\alpha-\beta}$, $c_{\alpha \beta} \in \mathbb C$,  form a basis of $\frak l_{-2}$.
 Let $\{u_{\mu}\}$ ($\{u_{\lambda}\}$, respectively) be a basis of $U_{-1}$ ($U_0$, respectively) consisting of weight vectors. We may assume that $[x_{-\alpha}, u_{\lambda}] = u_{-\alpha+\lambda}$ if $-\alpha + \lambda$ is a weight.\\

    For $A \in \Hom(\frak l_{-}, U)_1$, we have
 \begin{eqnarray*}
 A(x_{-\alpha}) &=& \sum_{\lambda} A_{\lambda, \alpha} u_{\lambda} \\
 A(x_{-\alpha -\beta})&=& \sum_{\mu}A_{\mu, \alpha+\beta} u_{\mu},
 \end{eqnarray*}
 for  some matrices $A_{\lambda, \alpha}$ and $A_{\mu, \alpha+\beta}$.
 Then
 \begin{eqnarray*}
 (\partial_0A)(x_{-\alpha}, x_{-\beta}) &=&[x_{-\alpha}, A(x_{-\beta})] - [x_{-\beta}, A(x_{-\alpha}) - A([x_{-\alpha}, x_{-\beta}]) \\
&=& [x_{-\alpha}, \sum_{\lambda} A_{\lambda, \beta}u_{\lambda}] -[x_{-\beta}, \sum_{\lambda}A_{\lambda, \alpha}u_{\lambda}] -  A(x_{-\alpha-\beta}c_{\alpha \beta})  \\
&=& \sum_{\lambda} A_{\lambda, \beta} u_{-\alpha+\lambda} - \sum_{\lambda} A_{\lambda, \alpha} u_{-\beta+\lambda}- \sum_{\mu} A_{\mu, \alpha+\beta}c_{\alpha \beta} u_{\mu} \\
&=&\sum_{\mu}  \left( A_{\mu+ \alpha, \beta} - A_{\mu+\beta, \alpha} + A_{\mu, \alpha+\beta} c_{\alpha \beta}  \right) u_{\mu}.
 \end{eqnarray*}
The condition that the image of $\partial_0A $ has dimension $\leq 1$ implies that for any choice of a pair $(x_{-\alpha}, x_{-\beta})$, $(\partial_0A)(x_{-\alpha}, x_{-\beta})$ is parallel to each other. \\

 \begin{enumerate}
 \item If $(\frak m, \frak g_0)$ is of type $(B_m, \alpha_{m-1}, \alpha_m)$, the action $ \frak l_1 \times U_{-1} \rightarrow U_0$ is given by
 \begin{eqnarray*}
 (\Sym ^2 V^* \otimes W^*) \times  V  &\rightarrow&  V^* \otimes W^* \\
(v_i^{*2} \otimes w^*, v_j) &\mapsto& \delta_{ij}v_i \otimes w^*
 \end{eqnarray*}
where $\{v_i : i=1,2\}$ is a basis of $V$ and $w$ is an element of $W$. Furthermore,  for $i=1,2$, the Lie bracket of any two element of $\{v_i^{*2} \otimes w^*: w \in W\}$ is zero. Write $v_1, v_2$ as $u_{\mu}, u_{\nu}$. Then
  $(\partial_0A)(x_{-\alpha}, x_{-\beta})$ is given by  $$\left( A_{\mu+ \alpha, \beta} - A_{\mu+\beta, \alpha} + A_{\mu, \alpha+\beta} c_{\alpha \beta}  \right) u_{\mu} + \left( A_{\nu+ \alpha, \beta} - A_{\nu+\beta, \alpha} + A_{\nu, \alpha+\beta} c_{\alpha \beta}  \right) u_{\nu}. $$
 If we take $x_{-\alpha}, x_{-\beta}$ in $\{v_1^2 \otimes w : w \in W\}$, then the coefficient of $u_{\mu}$ is zero. If we take $x_{-\alpha}, x_{-\beta}$ in $\{v_2^2 \otimes w : w \in W\}$, then the coefficient of $u_{\nu}$ is also zero.
 Since  $(\partial_0A)(x_{-\alpha}, x_{-\beta})$ is parallel to each other for any choice of a pair $(x_{-\alpha},x_{-\beta})$, both coefficients are zero for any $\alpha, \beta$. \\

 \item If $(\frak m, \frak g_0)$ is of type $(F_4, \alpha_2, \alpha_3)$, the action $ \frak l_1 \times U_{-1} \rightarrow U_0$ is given by
 \begin{eqnarray*}
 (\Sym^2 V^* \otimes W^*) \times  V &\rightarrow& V^* \otimes W^*\\
 ( {v_i^*}\circ{v_j^*} \otimes w^* , v_k ) &\mapsto & (\delta_{ik}v_j^* + \delta_{jk}v_i^*) \otimes w^*.
  \end{eqnarray*}
In particular, $({v_i^*}^2 \otimes w^*, v_j )$ maps to $\delta_{ij}v_i^* \otimes w^*$, where $\{v_i : i=1,2,3\}$ is a basis of $V$ and $w$ is an element of $W$.  Furthermore, for  $i=1,2,3$,  the Lie bracket of any two elements in $\{ {v_i^*}^2 \otimes w: w \in W\}$ is zero.

Write $v_1, v_2,v_3$ as $u_{\mu}, u_{\nu}, u_{\xi}$. Then
  $(\partial_0A)(x_{-\alpha}, x_{-\beta})$ is given by
  \begin{eqnarray*}
 && \left( A_{\mu+ \alpha, \beta} - A_{\mu+\beta, \alpha} + A_{\mu, \alpha+\beta} c_{\alpha \beta}  \right) u_{\mu} + \left( A_{\nu+ \alpha, \beta} - A_{\nu+\beta, \alpha} + A_{\nu, \alpha+\beta} c_{\alpha \beta}  \right) u_{\nu} \\
 && + \left( A_{\xi+ \alpha, \beta} - A_{\xi+\beta, \alpha} + A_{\xi, \alpha+\beta} c_{\alpha \beta}  \right) u_{\xi}
 \end{eqnarray*}
 By the same arguments as in (1), we get that all coefficients are zero.
 \end{enumerate}
\end{proof}

\subsection{A proof of Proposition \ref{prop: computation of H2 cohomology}}   \label{sect: proof of proposition}
We list up properties of $\frak g = U + \frak z + \frak l$.

\begin{enumerate}
\item [(${\bf P_0}$)] $[\frak l, \frak z]=0$ and $[\frak l_-, U_-]=0$
\item [(${\bf P_1}$)] For $X \in \frak l_{\geq - \mu+1} +U_{\geq 0} $, if $[\frak l_-, X]=0$, then $X=0$. \\
\end{enumerate}

\begin{proof} [Proof of Proposition \ref{prop: computation of H2 cohomology}]
 Let $\phi \in \Hom(\wedge^2 \frak m, \frak g)_k$ be such that $\partial \phi=0$, where $k \geq 1$. We will show that there exist   $\eta \in \Hom(\frak m, \frak g)_k$, $\zeta=\zeta_{\frak l}+\zeta_U$ with $\zeta_{\frak l} \in  H^2(\frak l_-, U)_k$ and $\zeta_U \in H^2(U_-, U)_1$, and $\xi=\xi_{\frak l} + \xi_U $ with $\xi_U(X^{-1}, \,\cdot\,) \in H^1(\frak l_-, U)_{k-1}$ and $\xi_{\frak l}(X^{-1}, \,\cdot\,) \in H^1(\frak l_-, \frak l)_{k-1}$ for $X^{-1} \in U_-$ satisfying that  $$\phi = \partial \eta + \zeta +\xi .$$ In other words, for any $X^{-1}, Y^{-1} \in U_-$ and $X^0, Y^0 \in \frak l_-$, we have
 \begin{eqnarray*}
 \phi(X^{-1} + X^{0}, Y^{-1}+ Y^0)   &=&  [X^{-1}+X^0, \eta(Y^{-1}+ Y^0)] -[Y^{-1}+Y^0, \eta(X^{-1} + X^0)]\\
 &&- \eta([X^0, Y^0])  + \zeta(X^0, Y^0) +\xi(X^0, Y^{-1})\\ && + \xi(X^{-1}, Y^0) + \zeta(X^{-1}, Y^{-1}),
 \end{eqnarray*}
 or equivalently,
 \begin{eqnarray*}
 \phi_{\frak l+\frak z}(X^{-1} + X^0, Y^{-1}+Y^0)
   &=& [X^0, \eta_{\frak l+\frak z}(Y^{-1} +Y^0)] - [Y^0, \eta_{\frak l+\frak z}(X^{-1} + X^0)] \\
  &&- \eta_{\frak l+\frak z}([X^0, Y^0])+\xi_{\frak l}(X^0, Y^{-1}) + \xi_{\frak l}(X^{-1}, Y^0) \\
   \phi_U(X^{-1}+X^0, Y^{-1}+Y^0)
 &=& [X^{-1}, \eta_{\frak l + \frak z}(Y^{-1} +Y^0)]-[Y^{-1}, \eta_{\frak l + \frak z}(X^{-1}+X^0)] \\
 &&  + [X^0, \eta_U(Y^{-1} + Y^0)] -[Y^0, \eta_U(X^{-1} + X^0)]   \\
 && - \eta_U([X^0, Y^0]) + \zeta_U(X^0, Y^0)+\xi_U(X^0, Y^{-1}) \\ && + \xi_U(X^{-1}, Y^0) + \zeta_U(X^{-1}, Y^{-1}).
 \end{eqnarray*}

\vskip 10 pt
 To do this, we decompose the identity
   $$\partial \phi(X^{-1}+ X^0, Y^{-1} + Y^0, Z^{-1} +Z^0)=0$$ into the sum of two identities: \\

%{\small \vskip 10 pt

%\begin{itemize}
%\item[{(\rm I)}]
\noindent (I) $[X^0, \phi_{\frak l+ \frak z}(Y^{-1}+Y^0, Z^{-1}+Z^0)] -[Y^0, \phi_{\frak l+ \frak z}(X^{-1}+X^0 , Z^{-1} +Z^0)]
+ [Z^0, \phi_{\frak l+\frak z}(X^{-1}+ X^0, Y^{-1} +Y^0)] -\phi_{\frak l+ \frak z}([X^0, Y^0],Z^{-1}+ Z^0) + \phi_{\frak l + \frak z}([X^0,Z^0],Y^{-1}+Y^0) - \phi_{\frak l + \frak z}([Y^0, Z^0], X^{-1} +X^0) =0$. \vskip 10 pt

%\item [{(\rm II)}]
\noindent (II) $[X^0, \phi_U(Y^{-1}+Y^0,Z^{-1}+Z^0)] - [Y^0, \phi_U(X^{-1}+X^0, Z^{-1}+Z^0)] +[Z^0, \phi_U(X^{-1}+X^0, Y^{-1}+Y^0)] +[X^{-1}, \phi_{\frak l + \frak z}(Y^{-1}+Y^0, Z^{-1}+Z^0)] -[Y^{-1}, \phi_{\frak l +\frak z}(X^{-1}+X^0, Z^{-1}+Z^0)] +[Z^{-1}, \phi_{\frak l + \frak z}(X^{-1}+X^0, Y^{-1}+Y^0)]  -\phi_U([X^0, Y^0],Z^{-1}+Z^0) + \phi_U([X^0, Z^0], Y^{-1}+Y0) - \phi_U([Y^0, Z^0], X^{-1}+X^0)=0$.
%\end{itemize}
\vskip 10 pt
%}

 \noindent {\bf  R1.} There exist    $\eta_{\frak l+\frak z}  +\eta_{U} \in \Hom(\frak l_-, \frak g)_k $ and $\zeta_{\frak l+ \frak z}  + \zeta_U \in H^2(\frak l_-, \frak g)_k$ such that
 \begin{eqnarray*}
 \phi_{\frak l+\frak z}(X^0, Y^0) &= & [X^0, \eta_{\frak l+ \frak z}(Y^0)] -[Y^0, \eta_{\frak l+\frak z}(X^0)] - \eta_{\frak l+\frak z}([X^0, Y^0]) + \zeta_{\frak l+\frak z}(X^0, Y^0)   \\
 %\phi_{\frak z}(X^0, Y^0) &=& \hskip 4.7 cm  -\psi_{\frak z}([X^0,Y^0]) + \zeta_{\frak z}(X^0,Y^0) \\
 \phi_{U}(X^0,Y^0) &=&  [X^0, \eta_U(Y^0)] -[Y^0, \eta_U(X^0)] -\eta_U([X^0,Y^0])+ \zeta_{U}(X^0, Y^0)
 \end{eqnarray*}
 Indeed, putting $X^{-1}=Y^{-1}=Z^{-1}=0$ into (I) and (II), we get $\partial_0(\phi|_{\frak l_-})=0$.

Note that    $\zeta_{\frak l+ \frak z} = \zeta_{\frak z}$ if $k=2$, and  $\zeta_{\frak l+ \frak z}=0$, otherwise,  by Lemma \ref{lem:H2 vanishing semisimple} (i) and (ii).  \\

\noindent {\bf  R2.} We have  $\phi_{\frak l_{r}}(Y^{-1}, Z^{-1})=0$ for $r \geq - \mu +1$ and $r \neq 0$. Indeed,
 putting $Y^0=Z^0=0$ to (I) for $k\neq 2$, we get $[X^0, \phi_{\frak l_r}(Y^{-1},Z^{-1}) ]=0$ for $r\neq 0$. The desired vanishing follows from (${\bf P_1}$).
 \\

  \noindent {\bf  R3.} We may extend $\eta_{\frak l + \frak z}$ from $\frak l_-$ to $\frak m=\frak l_- + U_-$ %$\eta_{\frak l+\frak z} \in \Hom(\frak l_-+U_{-}, \frak l+ \frak z)$ and
   so that
  $$\phi_{\frak l+\frak z}(Y^{-1}, \,\cdot\,) = [\eta_{\frak l+\frak z }(Y^{-1}),\,\cdot\,] -\xi_{\frak l+\frak z}^{Y^{-1}}(\,\cdot\,) \text{ in }\Hom(\frak l_-, \frak l+ \frak z)_{k-1}$$
  for some $\xi_{\frak l+\frak z}^{Y^{-1}} \in H^1(\frak l_-, \frak l+\frak z)_{k-1}$.
  Indeed, putting $X^{-1} =Y^0 =Z^{-1}=0$ to (I) we get
  $$[X^0, \phi_{\frak l+ \frak z}(Y^{-1},Z^0)] - [Z^0, \phi_{\frak l + \frak z}(Y^{-1}, X^0)] - \phi_{\frak l+ \frak z}(Y^{-1}, [X^0, Z^0]) =0,$$
  or equivalently,
  \begin{eqnarray*}
  \partial_0(\phi_{\frak l+\frak z}(Y^{-1}, \,\cdot\,))=0.  %\\
  \end{eqnarray*}
 Thus
  there exit $\eta_{\frak l + \frak z}^{Y^{-1}} \in (\frak l + \frak z)_{k-1}$ and   $\xi_{\frak l+\frak z}^{Y^{-1}}(\,\cdot\,) \in H^1(\frak l_-, \frak l+\frak z)_{k-1}$ such that
 $$\phi_{\frak l+\frak z}(Y^{-1}, \,\cdot\,) = [\eta_{\frak l+\frak z }^{Y^{-1}},\,\cdot\,] -\xi_{\frak l+\frak z}^{Y^{-1}}(\,\cdot\,) .$$
  Note that $\xi_{\frak l+ \frak z}^{Y^{-1}} =\xi_{\frak l}^{Y^{-1}}$ if $k=1$ (for $B_3$ type), $\xi_{\frak l+ \frak z}^{Y^{-1}} =\xi_{\frak z}^{Y^{-1}}$ if $k=2$ and $\xi_{\frak l+\frak z}^{Y^{-1}}=0$ otherwise, by Lemma \ref{lem:H1 vanishing semisimple} (i) and (ii). %(${\bf P_2}$)
Define $\eta_{\frak l + \frak z}(Y^{-1})$ by $\eta_{\frak l + \frak z}(Y^{-1}):= \eta_{\frak l+ \frak z}^{Y^{-1}}$.     \\

\noindent {\bf R4.} We have  $\phi_{\frak l_0 + \frak z}(Y^{-1}, Z^{-1}) =0 $. Indeed,  putting $Y^0=Z^0=0$ to (I) for $k=2$, we get $[X^0, \phi_{\frak l_0 +\frak z}(Y^{-1},Z^{-1}) ]=0$ and putting $X^0=Y^0=Z^0=0$ to (II) for $k=2$ we get
  \begin{eqnarray*}
  [X^{-1}, \phi_{\frak l_0 + \frak z}(Y^{-1}, Z^{-1})] - [Y^{-1}, \phi_{\frak l_0+ \frak z}(X^{-1}, Z^{-1})] + [Z^{-1}, \phi_{\frak l_0+ \frak z}(X^{-1}, Y^{-1})] =0.
  \end{eqnarray*}
  Take independent vectors  $X^{-1}, Y^{-1}, Z^{-1}$  in $U_-$ and arbitrary $X^0\in \frak l_-$ to get the desired result. \\

  \noindent {\bf R5.} We may extend $\zeta_U$ from $\frak l_-$ to $\frak m = \frak l_-+ U_-$ such that $\zeta|_{\wedge^2U_-} \in H^2(U_-, U)_1$, satisfying
  $$\phi_U(Y^{-1}, Z^{-1}) =[Y^{-1}, \eta_{\frak l+ \frak z}(Z^{-1})] - [Z^{-1}, \eta_{\frak l + \frak z}(Y^{-1})] +\zeta_U(Y^{-1}, Z^{-1})$$
   and
   $$\phi_{\frak z}(X^0, Z^{-1})% =[X^0, \eta_{\frak z}(Z^{-1})]+\xi_{\frak z}^{Z^{-1}}(X^0)
   = 0.$$

Indeed, putting $X^{-1}=Y^0=Z^0=0$ to (II) we get
  \begin{eqnarray*}
  [X^0, \phi_U(Y^{-1}, Z^{-1})]
     - [Y^{-1}, \phi_{\frak l+ \frak z}( X^{0}, Z^{-1})] + [Z^{-1}, \phi_{\frak l+ \frak z}( X^0, Y^{-1})] =0.
  \end{eqnarray*}
Thus
  \begin{eqnarray*}
  &&[X^0, \phi_U(Y^{-1}, Z^{-1})] \\
   &\stackrel{\bf R3.}{=}& [Y^{-1},[X^0, \eta_{\frak l + \frak z}(Z^{-1})] +\xi_{\frak l+\frak z}^{Z^{-1}}(X^0)] - [Z^{-1},[X ^0, \eta_{\frak l + \frak z}(Y^{-1})] +\xi_{\frak l+\frak z}^{Y^{-1}}(X^0)]  \\
   &\stackrel{(\bf P_0)}{=}&  [X^0,[Y^{-1}, \eta_{\frak l + \frak z}(Z^{-1})]]  - [X^0,[ Z^{-1}, \eta_{\frak l + \frak z}(Y^{-1})]]
     +[Y^{-1}, \xi_{\frak l+\frak z}^{Z^{-1}}(X^0)]  -[Z^{-1}, \xi_{\frak l+\frak z}^{Y^{-1}}(X^0)].
  \end{eqnarray*}
  %By (${\bf P_0}$)
  Hence
  \begin{eqnarray*}
   &&[X^0, \phi_U(Y^{-1}, Z^{-1})]
   -[X^0,[Y^{-1}, \eta_{\frak l + \frak z}(Z^{-1})]]  + [X^0,[ Z^{-1}, \eta_{\frak l + \frak z}(Y^{-1})]] \\
   &&=  [Y^{-1}, \xi_{\frak l+\frak z}^{Z^{-1}}(X^0)]  -[Z^{-1}, \xi_{\frak l+\frak z}^{Y^{-1}}(X^0)].
    \end{eqnarray*}
    Therefore, we have
    \begin{eqnarray*}
    \partial_0\left( \phi_U(Y^{-1}, Z^{-1}) -[Y^{-1}, \eta_{\frak l+ \frak z}(Z^{-1})] + [Z^{-1}, \eta_{\frak l + \frak z}(Y^{-1})]\right)
     =[Y^{-1}, \xi_{ \frak l+\frak z}^{Z^{-1}}(\,\cdot\,)]  -[Z^{-1}, \xi_{\frak l+ \frak z}^{Y^{-1}}(\,\cdot\,)].
    \end{eqnarray*}
    We claim that both sides vanish.

   By Lemma \ref{lem:H1 vanishing semisimple} (i) and (ii), $\xi_{\frak l+ \frak z}^{Y^{-1}}$ is $\xi_{\frak l}^{Y^{-1}}$ or zero if $k=1$, $\xi_{\frak l+ \frak z}^{Y^{-1}} $ is  $\xi_{\frak z}^{Y^{-1}}$ if $k=2$, and is zero otherwise.
   In the last case,
   $$\partial_0\left( \phi_U(Y^{-1}, Z^{-1}) -[Y^{-1}, \eta_{\frak l+ \frak z}(Z^{-1})] + [Z^{-1}, \eta_{\frak l + \frak z}(Y^{-1})]\right) =0.$$

   In the second case,
   $$A^{Y^{-1},Z^{-1}}:=\phi_U(Y^{-1}, Z^{-1}) -[Y^{-1}, \eta_{\frak l+ \frak z}(Z^{-1})] + [Z^{-1}, \eta_{\frak l + \frak z}(Y^{-1})]$$
   is an element of $U_0$ and we have
    $$\partial_0 \,A^{Y^{-1},Z^{-1}} =[Y^{-1}, \xi_{ \frak z}^{Z^{-1}}(\,\cdot\,)]  -[Z^{-1}, \xi_{ \frak z}^{Y^{-1}}(\,\cdot\,)].$$
   Both sides vanish by Lemma \ref{lem: cohomology lemma 1} because $[Y^{-1},\mathfrak z] -[Z^{-1}, \mathfrak z]$ has dimension two.
    In the first case, it is clear that both sides vanish because $[U_, \, \frak g_-]=0$.

   Therefore, we have
   $$\phi_U(Y^{-1}, Z^{-1}) =[Y^{-1}, \eta_{\frak l+ \frak z}(Z^{-1})] - [Z^{-1}, \eta_{\frak l + \frak z}(Y^{-1})]+\zeta_U(Y^{-1}, Z^{-1}) \text{ by } {\rm(}{\bf P_1}{\rm)} $$
   and
   $$\phi_{\frak z}(X^0, Z^{-1}) = \xi_{\frak z}^{Z^{-1}}(X^0) = 0.$$
 Note that $\zeta_U(Y^{-1}, Z^{-1}) \in U_-$.\\

   \noindent  {\bf R6.} We may extend $\eta_U$ from $\frak l_-$ to $\frak m=\frak l_- + U_-$ so that   %$\eta_U^{Y^{-1}} \in U$ such that
  $$\phi_U(Y^{-1},\,\cdot\,) - [Y^{-1}, \eta_{\frak l+ \frak z}(\,\cdot\,)] =-[\,\cdot\,, \eta_U(Y^{-1})] \text{ in } \Hom(\frak l_-, U)_{k-1}, $$
   and $$\zeta_{\frak z} (X^0,\,Y^0)=0.$$
  Indeed, putting $X^{-1}=Y^0=Z^{-1}=0$ we get
   \begin{eqnarray*}
   &&[X^0, \phi_U(Y^{-1}, Z^0) + [Z^0, \phi_U(X^0, Y^{-1})] -[Y^{-1}, \phi_{\frak l+ \frak z}(X^0), Z^0)] \\
   && + \phi_U([X^0, Z^0], Y^{-1}) =0.
   \end{eqnarray*}
   %By {\bf R1.}
   Thus
   \begin{eqnarray*}
   &&[X^0, \phi_U(Y^{-1},Z^0)] + [Z^0, \phi_U(X^0, Y^{-1})] + \phi_U([X^0, Z^0],Y^{-1}) \\
   &=& [Y^{-1}, \phi_{\frak l + \frak z}(X^0, Z^0)] \\
   &\stackrel{\bf R1.}{=}&[Y^{-1}, [X^0, \eta_{\frak l + \frak z}(Z^0)] - [Z^0, \eta_{\frak l + \frak z}(X^0)]-\eta_{\frak l+ \frak z}([X^0, Z^0]) + \zeta_{\frak l+ \frak z}(X^0, Z^0) ] \\
   &\stackrel{({\bf P_0})}{=}& [X^0, [Y^{-1}, \eta_{\frak l + \frak z}(Y^{0})]] - [Y^{-1}, [Z^0, \eta_{\frak l + \frak z}(Y^0)]] -[Y^{-1}, \eta_{\frak l + \frak z}([X^0, Z^0])] \\
   && + [Y^{-1}, \zeta_{\frak l+ \frak z}(X^0, Z^0)  ]
   \end{eqnarray*}
   Therefore, we have
   \begin{eqnarray*}
   \partial_0(\phi_U(Y^{-1},\,\cdot\,) -[Y^{-1}, \eta_{\frak l + \frak z}(\,\cdot\,)] ) (\,\cdot\,,\,\cdot\,) =[Y^{-1}, \zeta_{\frak l+ \frak z}(\,\cdot\,,\,\cdot\,)] .
   \end{eqnarray*}
    We claim that both  sides vanish.

   By Lemma \ref{lem:H2 vanishing semisimple} (i) and (ii),   $\zeta_{\frak l+ \frak z}$  is $\zeta_{\frak z} $ if $k=2$, and is zero, otherwise.
   In the second case,
   $$\partial_0(\phi_U(Y^{-1},\,\cdot\,) -[Y^{-1}, \eta_{\frak l + \frak z}(\,\cdot\,)] ) (\,\cdot\,,\,\cdot\,) = 0.$$
    In the first case,  $$A^{Y^{-1}}:=\phi_U(Y^{-1},\,\cdot\,) -[Y^{-1}, \eta_{\frak l + \frak z}(\,\cdot\,)] $$ is an element of $\Hom(\frak l_-, U)_1$, and
   we have
    \begin{eqnarray*}
   \partial_0 A^{Y^{-1}} =[Y^{-1}, \zeta_{\frak z}(\,\cdot\,,\,\cdot\,)].
   \end{eqnarray*}
   Both sides vanish by Lemma \ref{lem: cohomology lemma 2} because $[Y^{-1},\frak z]$ has dimension one.

   By Lemma \ref{lem:H1 vanishing semisimple} %(${\bf P_2}$),
   there exist    $\eta_U^{Y^{-1}} \in U_{k-1}$ and $\xi_U^{Y^{-1}} \in H^1(\frak l_-, U)_{k-1}$ such that
  $$\phi_U(Y^{-1},\,\cdot\,) - [Y^{-1}, \eta_{\frak l+ \frak z}(\,\cdot\,)] =-[\,\cdot\,, \eta_U^{Y^{-1}}] +\xi_U^{Y^{-1}}(\,\cdot\,) \text{ in } \Hom(\frak l_-, U)_{k-1}, $$
  and $$\zeta_{\frak z} (\,\cdot\,,\,\cdot\,)=0.$$
  Define $\eta_U(Y^{-1})$ by $\eta_U(Y^{-1}):=\eta_U^{Y^{-1}}$ and define $\xi_U \in \Hom (\frak l_{-1} \wedge U_{-1}, U_{-1})$ by $\xi_U(Y^{-1}, X^0)=-\xi_U(X^0, Y^{-1}):=\xi_U^{Y^{-1}}(X^0)$. \\

Consequently, there exist $\eta = \eta_{\frak l+ \frak z} + \eta_U \in \Hom(\frak m, \frak g)_k$, $\zeta =  \zeta_U \in H^2(\frak l_-, U)_k+H^2(U_-, U)_1$ and $\xi_U+ \xi_{\frak l} \in (\frak l_{-1} \wedge U_{-1})^* \otimes (U_{-1}+ \frak l_{-1})$ satisfying
\begin{eqnarray*}
\phi_{\frak l+\frak z}(X^0, Y^0) &\stackrel{{\bf R1.}, {\bf R6}}{=}& [X^0, \eta_{\frak l+\frak z}(Y^0)] -[Y^0,\eta_{\frak l+\frak z}(X^0)] - \eta_{\frak l+\frak z}([X^0,Y^0])\\% + \zeta_{\frak l+ \frak z}(X^0,Y^0)  \\
\phi_{\frak l+\frak z}(X^0, Y^{-1})&\stackrel{{\bf R3.},{\bf R5.}}{=}&[X^0, \eta_{\frak l}(Y^{-1})]+\xi_{\frak l}(X^0, Y^{-1}) \\
  \phi_{\frak l_{\geq -\mu+1}+\frak z}(X^{-1},Y^{-1}) &\stackrel{{\bf R2.}, {\bf R4.}}{=}&0 \\
  \phi_U(X^0, Y^0)&\stackrel{\bf R1.}{=}& [X^0, \eta_U(Y^0)] -[Y^0,\eta_U(X^0)] - \eta_U([X^0, Y^0]) + \zeta_U(X^0,Y^0) \\
  \phi_U(X^0, Y^{-1}) &\stackrel{\bf R6.}{=}& -[Y^{-1}, \eta_{\frak l+\frak z}(X^0)] +[X^0, \eta_U(Y^{-1})] +\xi_U (X^0, Y^{-1}) \\
  \phi_U(X^{-1}, Y^{-1}) &\stackrel{\bf R5.}{=}& [X^{-1}, \eta_{\frak l+ \frak z}(Y^{-1})] -[Y^{-1},\eta_{\frak l+\frak z}(X^{-1})]+\zeta_U(Y^{-1}, Z^{-1}).
\end{eqnarray*}

  It remains to show that the choice of $\zeta$ in the  expression $\phi = \partial \eta + \zeta   $ is unique. Indeed, if $\partial \eta + \zeta   =\partial \eta' + \zeta'   $, then $\partial(\eta-\eta') = \zeta' - \zeta$. It suffices to show that if $\partial \eta = \zeta$, then   $\zeta$ is zero.
From
$$(\partial \eta)(X^{-1}+ X^0, Y^{-1}+ Y^0) = \zeta(X^{-1}+X^0, Y^{-1}+Y^0)$$
it follows that
\begin{eqnarray*}
\zeta(X^0, Y^0) &=&[X^0, \eta(Y^0)]-[Y^0, \eta(X^0)] -\eta([X^0, Y^0]) \\
\zeta(X^0, Y^{-1})&=& [X^0, \eta(Y^{-1})] - [Y^{-1}, \eta(X^0)].
\end{eqnarray*}
The first identity implies that $\zeta = \partial_0 \eta|_{\frak l_-}$. Since $\zeta|_{\wedge^2 \frak l_-} \in H^2(\frak l_-, U )_k$, $\zeta|_{\wedge^2 \frak l_-}$ vanishes and so does $\partial_0 \eta|_{\frak l_-}$. Since $H^1(\frak l_-, \frak g)_k=0$ for any $k\geq 1$, there is $\chi$ such that $\eta|_{\frak l_-} =\partial_0\chi$. Then $\eta(X^0) =[\chi, X^0]$ for $X^0 \in \frak l_-$.

The second identity becomes
\begin{eqnarray*}
\zeta(X^0, Y^{-1}) &=& [X^0, \eta(Y^{-1})] -[Y^{-1}, [\chi, X^0]] \\
&=& [X^0, \eta(Y^{-1}) + [Y^{-1}, \chi]] \text{ because } [\frak l_-,   U_-]=0
\end{eqnarray*}
Since $\zeta(\,\cdot\,, Y^{-1}) |_{\frak l_-} \in H^1(\frak l_-, U)_{k-1}$, we have  $\zeta(\,\cdot\,, Y^{-1}) |_{\frak l_-} =0$.

This completes the proof of Proposition \ref{prop: computation of H2 cohomology}.
  \end{proof}

\section{Local equivalence of geometric structures} \label{sect:local equivalence}

 We keep the same assumptions and notations as in Section \ref{sect:horospherical BF type}.

\begin{proposition} \label{extension to rationally connected open subset}
Let $M$ be a Fano manifold of Picard number one and $\mathcal C_x(M) \subset \mathbb PT_xM$ be the variety of minimal rational tangents at a general point $x \in X$ associated with a minimal dominant rational component $\mathcal K$. Suppose that $\mathcal C_x(M) \subset \mathbb PT_xM$ is projectively equivalent to ${\bf S} \subset \mathbb P \frak m$ for general $x \in M$. Denote by $D \subset TM$   the distribution on $M$ obtained by the linear span of the affine cone of $\mathcal C_x(M)$ in $T_xM$.
Then there is a Zariski open subset $M^0\subset M$ such that a general member  of $\mathcal K$ lies on $M^0$, satisfying  the following properties:
\begin{enumerate}
\item $D|_{M^0}$ is of type $\frak m$;
\item $\mathcal C|_{M^0}\subset \mathbb P D|_{M^0}$ defines an ${\bf S}$-structure on $(M^0, D|_{M^0})$ and there corresponds to a $G_0$-structure on $(M^0, D|_{M^0})$, where $G_0=G(\widehat{\bf S})$.
\end{enumerate}

\end{proposition}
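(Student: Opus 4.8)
The plan is to translate the pointwise projective geometry of the variety of minimal rational tangents into the symbol algebra of $D$, and then to propagate its isomorphism type along a general minimal rational curve. First I would extract two numerical consequences of the hypothesis: since $\mathcal C_x(M)\subset \mathbb PT_xM$ is projectively equivalent to ${\bf S}\subset \mathbb P\frak m$ \emph{as a pair}, the ambient spaces have equal dimension, so $\dim M=\dim\frak m$; and since $\widehat{\mathcal C}_x(M)$ spans $D_x$ while $\widehat{\bf S}$ spans $\frak g_{-1}$, we get $\dim D_x=\dim\frak g_{-1}$ at every point where $\mathcal C_x(M)\cong {\bf S}$. Accordingly I would take $M^0$ to be the Zariski open subset of points $x$ at which $D$ is a subbundle (i.e. $x\notin \Sing(D)$, a set of codimension $\geq 2$) and $\mathcal C_x(M)\subset \mathbb PD_x$ is projectively equivalent to ${\bf S}\subset \mathbb P\frak g_{-1}$.

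For (1), fix $x\in M^0$ and a linear isomorphism $\ell_x\colon \frak g_{-1}\to D_x$ carrying $\widehat{\bf S}$ onto $\widehat{\mathcal C}_x(M)$. By Proposition \ref{kernel of frobenius braket} the Frobenius bracket $\wedge^2 D_x\to T_xM/D_x$ kills $\alpha\wedge\xi$ for $\alpha\in\widehat{\mathcal C}_x(M)$ and $\xi\in T_\alpha\widehat{\mathcal C}_x(M)$; transporting back by $\ell_x$, it annihilates the degree $-2$ generators of the ideal $I$ defining $\frak m=F(\frak g_{-1})/I$. Since $\frak m$ is fundamental and determined by ${\bf S}$ (Proposition \ref{fundamental graded Lie algebra of vmrt}(1)) and $\Symb_x(D)$ is generated by $D_x$, the map $\ell_x$ therefore extends to a surjective homomorphism of graded Lie algebras $\psi_x\colon \frak m\twoheadrightarrow \Symb_x(D)$. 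Moreover $D$ is bracket-generating at $x$: otherwise its stabilized derived distribution would be a proper integrable distribution containing the linear span of $\widehat{\mathcal C}_x(M)$, contradicting Proposition \ref{non-integrability}; hence $\dim\Symb_x(D)=\dim T_xM=\dim\frak m$, and the surjection $\psi_x$ is an isomorphism. Thus $\Symb_x(D)\cong\frak m$ for every $x\in M^0$, which is (1), and $\psi_x$ carries ${\bf S}$ onto $\mathcal C_x(M)$.

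It remains to see that a general member of $\mathcal K$ lies in $M^0$ and to produce the $G_0$-structure. A general $C=f(\mathbb P^1)\in\mathcal K$ is standard, runs through points of $M^0$, and avoids $\Sing(D)$ by Proposition \ref{general mrc not intersect codimension 2 subvariety}. Along $C^\sharp$ the relative second fundamental form is constant (Proposition \ref{parallel transport of II}); the hypotheses of Proposition \ref{parallel transport of III} hold because the third fundamental form of ${\bf S}$ is surjective (Lemma \ref{parallel transport of III-X case}(1)) and, using the isomorphism $\Symb_x(D)\cong\frak m$ together with Lemma \ref{parallel transport of III-X case}(2), the rank of $[\mathcal O(2)_x,\,\cdot\,]$ equals $s$; hence the relative third fundamental form is constant along $C^\sharp$ as well. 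Feeding the constancy of both fundamental forms and the genericity $\mathcal C_t\cong{\bf S}$ into Proposition \ref{parallel transport of vmrt} (with $\mathcal U=f^*D$) yields $\mathcal C_t\cong {\bf S}$ for every $t$, so $C\subset M^0$. Finally, on $M^0$ the isomorphisms $\psi_x$ exhibit $\mathcal C|_{M^0}\subset \mathbb PD|_{M^0}$ as an ${\bf S}$-structure (Definition \ref{cone structure}), and Proposition \ref{S-str and G_0-str}, with $G_0=G(\widehat{\bf S})$ identified in Proposition \ref{fundamental graded Lie algebra of vmrt}(2), yields the corresponding $G_0$-structure, giving (2).

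I expect the propagation step to be the main obstacle. The rank condition (2) of Proposition \ref{parallel transport of III} for $M$ can only be verified after the symbol isomorphism is in hand, so the order of the argument matters; and, more delicately, one must control $\mathcal C_t$ at the finitely many points of $C$ meeting the locus where the variety of minimal rational tangents could a priori degenerate. Reconciling the flat limit supplied by Proposition \ref{parallel transport of vmrt} with the genuine variety of minimal rational tangents there — via the bound $\dim\mathcal C_x\le K_M^{-1}.C-2$ and irreducibility — is the point requiring the most care, and is where I would follow the $(G_2)$-type analysis of \cite{Hw15}.
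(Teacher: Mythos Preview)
Your outline tracks the paper's argument closely for the propagation of the variety of minimal rational tangents along $C$ and for passing from the ${\bf S}$-structure to a $G_0$-structure. The genuine gap lies in step (1). Your dimension argument ``$\psi_x$ surjective and $\dim\Symb_x(D)=\dim T_xM$'' requires that $D$ be bracket-generating \emph{at the given point $x$}, i.e.\ that the derived sheaves $D^{-p}$ be subbundles of the expected rank there. The appeal to Proposition~\ref{non-integrability} only yields this for a \emph{general} $x$: it rules out a proper integrable stabilised system on a dense open set, but it does not prevent the Frobenius bracket $\wedge^2 D_x\to T_xM/D_x$ (and its iterates) from dropping rank along a divisor inside your $M^0$. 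If that happens at some point of a general curve $C$ you lose both the well-definedness of $\Symb_y(D)$ and the conclusion $C\subset M^0$, and Proposition~\ref{general mrc not intersect  codimension 2 subvariety} is of no help since this degeneracy locus need not have codimension $\geq 2$.

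The paper confronts exactly this point, and it is the substantial part of the proof. After obtaining the splittings $f^*D=\mathcal O(2)\oplus\mathcal O(1)^p\oplus\mathcal O^r\oplus\mathcal O(-1)^s$ and $f^*(TM/D)=\mathcal O(1)^s\oplus\mathcal O^t$ from Proposition~\ref{parallel transport of III}, it computes the splitting types of the auxiliary bundles $\mathcal V|_C$ and $\mathcal W|_C$ (Lemma~\ref{lem:splitting type of V}) via the filtrations of Lemma~\ref{lem:finer exact sequences}, and then compares the bundle $\mathcal E=\Sym^2(\wedge^2\mathcal V)\otimes\wedge^2\mathcal W$ through which the bracket factors with $TM/D|_C$ (and, in the $F_4$ case, also $\mathcal E^{-3}$ with $TM/D^{-2}|_C$). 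Because the splitting types on $\mathbb P^1$ coincide, a bundle map that is an isomorphism at one point is an isomorphism everywhere, forcing $\Symb_y(D)\cong\frak m$ along the whole of $C$. This vector-bundle computation --- not the VMRT limit you flag in your final paragraph, which is already absorbed by Proposition~\ref{parallel transport of vmrt} --- is the missing ingredient.
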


For the definition of ${\bf S}$-structures and $G_0$-structures, see Section \ref{sect:G0 and S structures}.

\begin{proof}

The distribution $D$ is holomorphic outside its singular set $Sing(D) \subset M$ of codimension $\geq 2$.
 Let $M' \subset M-Sing(D)$ be a  Zariski open subset such that for all $x \in M'$ satisfying $\mathcal C_x \subset \mathbb P D_x$ is projectively equivalent to $ {\bf S} \subset \mathbb P {\frak g_{-1}}$.

Let $C $ be a standard minimal rational curve represented by $f:\mathbb P^1 \rightarrow M$ with $C \cap M' \not = \emptyset$ and $C \not\subset bad(\mathcal K)$ and  $C \cap Sing(D) =\emptyset$. Then for a generic point $y \in C$, $\mathcal C_y \subset \mathbb PD_y$ is projectively equivalent to $\bf S\subset \mathbb P \frak g_{-1}$. By Proposition \ref{parallel transport of II} and Proposition \ref{parallel transport of III} and Lemma  \ref{parallel transport of III-X case}, the relative second fundamental form and the relative third fundamental form of $\mathcal C(M)$ along the lifting $C^{\sharp}$ of $C$ is constant, and we have %the following.
\begin{eqnarray*}
f^*D &=& \mathcal O(2) \oplus \mathcal O(1)^p \oplus \mathcal O^r \oplus \mathcal O(-1)^s \\
f^*(TM/D) &=& \mathcal O(1)^s \oplus \mathcal O^t.
\end{eqnarray*}
For the values $p,q,r,s,t$, see Remark \ref{pqrst}.
Furthermore,
the pull-back $(f^{\sharp})^*\widehat{T}^{\varpi} $ of the relative affine tangent bundle $ \widehat{T}^{\varpi} $ of $\mathcal C \subset \mathbb P(TM)$ is the positive part $ P:=\mathcal O(2) \oplus \mathcal O(1)^p$ of $f^*D$, and
   the pull-back $(f^{\sharp})^*\widehat{T}^{(2), \varpi} $ of the relative  second osculating affine   bundle   $ \widehat{T}^{(2), \varpi} $ of $\mathcal C \subset \mathbb P(TM)$  is the subbundle  $P^{(2)} :=\mathcal O(2) \oplus \mathcal O(1)^p \oplus \mathcal O^r$ of $f^*D$. % where $r$ is the dimension of the image of $II^{\varpi}$.

 By Proposition \ref{parallel transport of vmrt}, for any $y\in C$, $\mathcal C_y \subset \mathbb PD_y$ is projectively equivalent to $\bf S\subset \mathbb P \frak g_{-1}$.

Let $D^{-1} :=D$ and $D^{-2} := D + [D,D]$. For generic $x \in M$, the Frobenius bracket $ [\,,\,]$ is given by
\begin{eqnarray*}
    [\,,\,]_1&:&\wedge^2 D_x^{-1} \rightarrow D_x^{-2}/D_x \subset T_xM/D_x \\ [1 pt]
    [\,,\,]_2&:&D_x^{-1} \wedge D_x^{-2} \rightarrow (T_xM/D_x)/(D_x^{-2}/D_x)= T_xM/D_x^{-2}.
    \end{eqnarray*}
In particular, if $X$ is $(B_m, \alpha_{m-1}, \alpha_m)$, then $D^{-2}=TM$ and simply the bracket is
    \begin{eqnarray*} [\,,\,]:\wedge^2 D_x \rightarrow T_xM/D_x.
    \end{eqnarray*}

For generic $v \in \widehat {\mathcal C}_x$ and $w \in T_v(\widehat{\mathcal C}_x)$, we have $[v,w]=0$ by Proposition \ref{kernel of frobenius braket}. Hence, by Proposition \ref{fundamental graded Lie algebra of vmrt} (1) and its proof, the symbol algebra $\Symb_x(D)$ is isomorphic to $\mathfrak m$ at general $x$. It remains to show that $\Symb_y(D)$ is isomorphic to $\mathfrak m$ for any $y \in C$. For this, we need to know the decompositions of vector bundles related to the Frobenius bracket. From now on,  we use notations $D|_C$, $TM/D|_C$, etc., instead of $f^*D$, $f^*(TM/D)$, for simplicity.

Recall that
\begin{enumerate}
\item  when $X$ is $(B_m, \alpha_{m-1}, \alpha_m)$,
\begin{enumerate}
\item $\dim \frak g_{-1} = 3m-1$, $\dim \widehat{\bf S} =m+1$, $\dim \frak g_{-2} = (m-1)(m-2)/2$
\item $D|_C=\mathcal O(2) \oplus \mathcal O(1)^m \oplus \mathcal O^m \oplus \mathcal O(-1)^{m-2}$, $TM/D|_C = \mathcal O(1)^{m-2} \oplus \mathcal O^t$, where $t=(m-2)(m-3)/2$;
\end{enumerate} %where $t=q-r-2s$

\item when $X$ is $(F_4, \alpha_2, \alpha_3)$,
\begin{enumerate}
\item $\dim \frak g_{-1} = 15$, $\dim \widehat{\bf S} =5$, $\dim \frak g_{-2} = 6$, $\dim \frak g_{-3} = 2$
\item $D|_C=\mathcal O(2) \oplus \mathcal O(1)^4 \oplus \mathcal O^7 \oplus \mathcal O(-1)^{3}$, $TM/D|_C = \mathcal O(1)^{3} \oplus \mathcal O^5$.
\end{enumerate}

\end{enumerate}

\medskip
We need the following Lemma to complete the proof of Proposition \ref{extension to rationally connected open subset}.

\begin{lemma}   \label{lem:splitting type of V}
Let $\mathcal V$ and $\mathcal W $ be vector bundles on $M'$ associated with $V$ and $W$.
Let $C$ be a general member of $\mathcal K$ passing through $x \in M'$.
\begin{enumerate}
 \item  When $X$ is $(B_m, \alpha_{m-1}, \alpha_m)$, we have
$$\mathcal V|_C= \mathcal O(1) \oplus \mathcal O \text{ and } \mathcal W|_C= \mathcal O \oplus \mathcal O(-1)^{m-2}.  $$

 \item  When $X$ is $(F_4, \alpha_2, \alpha_3)$,
 we have
$$\mathcal V|_C= \mathcal O(1) \oplus \mathcal O^2 \text{ and } \mathcal W|_C= \mathcal O \oplus \mathcal O(-1)  .$$

\end{enumerate}

\end{lemma}

%\medskip
%Now we continue the proof of Proposition \ref{extension to rationally connected open subset}.
Set
\begin{eqnarray*}
\mathcal E &:=&(\Sym^4\mathcal V)^{\perp} \otimes \wedge^2 \mathcal W = \Sym^2(\wedge^2 \mathcal V) \otimes \wedge^2 \mathcal W.
\end{eqnarray*}
By Lemma \ref{lem:splitting type of V}, we have
\begin{eqnarray*}
\mathcal E|_C
&=& \left\{\begin{array}{l}
 \mathcal O(2) \otimes (\mathcal O(-1)^{m-2} \oplus \mathcal O(-2)^t)   \text{ when } X \text{ is } (B_m, \alpha_{m-1}, \alpha_m) \\[5 pt]
\Sym^2 (\mathcal O(1)^2 \oplus \mathcal O) \otimes \mathcal O(-1)   \text{ when } X \text{ is } (F_4, \alpha_2, \alpha_3).
\end{array}\right.
\end{eqnarray*}
The Frobenius bracket $[\,,\,]:\wedge^2D|_C \rightarrow TM/D|_C$ induces a map
$$ \mathcal E |_C \rightarrow TM/D|_C, $$
which is an isomorphism onto its image for generic $y \in C$ by Proposition \ref{fundamental graded Lie algebra of vmrt} (1).

When $X$ is $(B_m, \alpha_{m-1}, \alpha_m)$, since $\mathcal E|_C$ and $TM/D|_C$ have the same splitting type, $\mathcal E|_C \rightarrow TM/D|_C$ is an isomorphism at any point of $C$.

When $X$ is $(F_4, \alpha_2, \alpha_3)$, $\mathcal E|_C \rightarrow TM/D|_C$ is an isomorphism onto $D^{-2}/D|_C$ at any point of $C$.
Now we have
$$0 \rightarrow D^{-2}/D \rightarrow TM/D \rightarrow TM/D^{-2} \rightarrow 0.$$
From $D^{-2}/D|_C=\mathcal O(1)^3 \oplus \mathcal O \oplus \mathcal O(-1)$ and $TM/D|_C=\mathcal O(1)^3 \oplus \mathcal O^5$, it follows that $TM/D^{-2}|_C= \mathcal O(1) \oplus \mathcal O$.

Set
\begin{eqnarray*}
 \mathcal E^{-3}&:=&\Sym^2(\wedge^3\mathcal V) \otimes (\mathcal W \otimes \wedge^2 \mathcal W)
\end{eqnarray*}
Then $\mathcal E^{-3}|_C = \mathcal O(1) \oplus \mathcal O$.
The Frobenius  bracket $[\,,\,]:D \wedge   (D^{-2}/D) \rightarrow  TM /D^{-2}$ induces a map

$$\mathcal E^{-3}|_C \rightarrow  TM  /D^{-2}|_{C} $$
which is an isomorphism for generic $y \in C$ by Proposition \ref{fundamental graded Lie algebra of vmrt} (1). Hence it  is an isomorphism at any point of $C$ because  $TM/D^{-2}|_C =\mathcal O(1) \oplus \mathcal O$   has same splitting type as $\mathcal E^{-3}|_C$,

Consequently, $\Symb_y(D)$ is isomorphic to $\mathfrak m$ for any $y \in C$.

In sum, there exists a Zariski open subset $M'\subset M^0\subset M$ such that a general member  of $\mathcal K$ lies on $M^0$ and the varieties of minimal rational tangents $\mathcal C|_{M^0}\subset \mathbb P D|_{M^0}$ defines an ${\bf S}$-structure on $(M^0, D|_{M^0})$ of type $\frak m$.
%By Proposition \ref{S-str and G_0-str horospherical case}, % and Proposition \ref{fundamental graded Lie algebra of vmrt} (2),
Proposition \ref{S-str and G_0-str} together with Proposition \ref{fundamental graded Lie algebra of vmrt} implies that
there exist $G_0$-structure on $(M^0, D|_{M^0})$ corresponding to the ${\bf S}$-structure, where $G_0=G(\widehat{\bf S})$. This completes the proof of Proposition  \ref{extension to rationally connected open subset}.
\end{proof}

\begin{proof}[Proof of Lemma \ref{lem:splitting type of V}]
We adapt an argument in the proof of Proposition 7.3 of \cite{HL21}.

Recall that we have the following exact sequence:
\begin{eqnarray*}
0 \rightarrow \mathcal V|_C \rightarrow D |_C = \mathcal O(2) \oplus \mathcal O(1)^p \oplus \mathcal O^r \oplus \mathcal O(-1)^s \rightarrow \Sym^2 \mathcal V \otimes \mathcal W |_C \rightarrow 0.
\end{eqnarray*}

Let $v \in V$ and $w \in W$ be vectors  with $[T_xC] =[v + v^2 \otimes w]$. Then they define  vector subbundles $\mathcal V_0 \subset \mathcal V|_C$ and $\mathcal W_0 \subset \mathcal W|_C$. Set $\mathcal V_0^{(1)}:=\mathcal V|_C/\mathcal V_0$ and $\mathcal W_0^{(2)}:=\mathcal W|_C/\mathcal W_0$.
By Lemma \ref{lem:finer exact sequences}, we get the following exact sequences.
\begin{eqnarray*}
 0 \rightarrow &\mathcal O(2)& \rightarrow\mathcal Q^{(0)} \rightarrow 0 \\
0 \rightarrow  \mathcal V_0  \rightarrow &\mathcal O(1)^p& \rightarrow \mathcal Q^{(1)}  \rightarrow 0 \\
0 \rightarrow  \mathcal V_0^{(1)}   \rightarrow &\mathcal O^r& \rightarrow \mathcal Q^{(2)}  \rightarrow 0\\
 0  \rightarrow &\mathcal O(-1)^s& \rightarrow \mathcal Q^{(3)} \rightarrow 0
\end{eqnarray*}
with
\begin{eqnarray*}
  \mathcal Q^{(0)} &= &    \Sym^2 \mathcal V_0 \otimes \mathcal W_0  \\
\deg \mathcal Q^{(1)} &=& \deg \left(\mathcal V_0 \circ \mathcal V_0^{(1)}    \otimes \mathcal W_0\right)  \oplus \left( \Sym^2 \mathcal V_0 \otimes \mathcal W_0^{(1)} \right)  \\
\deg \mathcal Q^{(2)} &=& \deg \left(\Sym^2 \mathcal V_0^{(1)}  \otimes \mathcal W_0\right)  \oplus \left( \mathcal V_0 \circ  \mathcal V_0^{(1)}  \otimes \mathcal W _0^{(1)} \right)  \\
 \mathcal Q^{(3)} &= &     \Sym^2  \mathcal V _0^{(1)}  \otimes  \mathcal W_0^{(1)} .
\end{eqnarray*}
Furthermore,
\begin{enumerate}
\item[]
any direct summand of $\mathcal V_0^{(1)}$ has degree $\leq 0$, and
\item[]
any direct summand of $\Sym^2 \mathcal V_0^{(1)} \otimes \mathcal W_0^{(1)}$ has degree  $  -1$.
\end{enumerate}
The first statement follows from the third exact sequence, and the second statement follows from the last one.

When $X$ is $(B_m, \alpha_{m-1}, \alpha_m)$, write $$\mathcal V_0  =\mathcal O(a_1), \mathcal V_0^{(1)}=\mathcal O(a_2), \mathcal W_0 =\mathcal O(b_1), \text{ and } \mathcal W_0^{(1)}=\mathcal O(b_2) \oplus \dots \oplus \mathcal O(b_{m-1}).$$
Then we have
\begin{eqnarray*}
2&=& 2a_1 + b_1 \\
m&=& a_1 +(a_1 +a_2 +b_1) + \sum_{i=2}^{m-1}(2a_1+b_i) =(2m-2)a_1 + a_2 + \sum_{i=1}^{m-1}b_i \\
0&=&a_2 + (2a_2 + b_1) + \sum_{i=2}^{m-1}(a_1 +a_2 + b_i) = (m-2)a_1 + (m+1) a_2 + \sum_{i=1}^{m-1} b_i \\
-m+2 &=& \sum_{i=2}^{m-1}(2a_2 + b_i) = 2(m-2) a_2+ \sum_{i=2}^{m-1} b_i.
\end{eqnarray*}
Therefore, $a_1=1$ and $a_2=0$ and $b_1=0$ and $\sum_{i=2}^{m-1} b_i = -m+2$.
Since the degree of a direct summand of $\Sym^2 \mathcal V_0^{(1)} \otimes \mathcal W_0^{(1)}$ is $  -1$, we have $b_i=-1$ for all $2 \leq i \leq m-1$.

Since the exact sequences
 \begin{eqnarray*}
 0 \rightarrow \mathcal V_0 \rightarrow &\mathcal V|_C& \rightarrow \mathcal V_0^{(1)} \rightarrow 0\\
 0 \rightarrow \mathcal W_0 \rightarrow &\mathcal W|_C& \rightarrow \mathcal W_0^{(1)} \rightarrow 0
 \end{eqnarray*}
split, we have the desired decompositions of
$\mathcal V|_C$ and $\mathcal W|_C$.

\medskip
When $X$ is $(F_4, \alpha_{2}, \alpha_3)$, write
$$\mathcal V_0  =\mathcal O(a_1), \mathcal V_0^{(1)}=   \mathcal O(a_2)\oplus \mathcal O(a_3), \mathcal W_0 =\mathcal O(b_1), \text{ and } \mathcal W_0^{(1)}= \mathcal O(b_2).$$
 Then we have
 \begin{eqnarray*}
 2&=& 2a_1 + b_1 \\
 4&=& a_1 + (a_1 +a_2 +b_1) +(a_1+a_3 +b_1) +(2a_1+b_2) \\
 &=& 5a_1 +(a_2 +a_3) +2 b_1 +b_2 \\
 0&=& a_2+a_3 + (3(a_2 +a_3) + 3 b_1)) + (a_1 +a_2 + b_2) + (a_1 +a_3+ b_2)\\
 &=& 2a_1 + 5(a_2 +a_3) +3 b_1 +2 b_2  \\
 -3&=& 3(a_2+a_3) +3b_2.
 \end{eqnarray*}
Therefore, $a_1=1$ and $b_1=0$ and $a_2+a_3=0$ and $b_2=-1$.
Since the degree of a direct summand of $\mathcal V_0^{(1)}$ is $\leq 0$, we get $a_2=a_3=0$.
By the same reason as in the case of $(B_m, \alpha_{m-1}, \alpha_m)$,  we have the desired decompositions of
$\mathcal V|_C$ and $\mathcal W|_C$.

This completes the proof of Lemma \ref{lem:splitting type of V}.
\end{proof}

 By Proposition \ref{extension to rationally connected open subset}, there is a Zariski open subset $M^0\subset M$ such that a general member  of $\mathcal K$ lies on $M^0$, satisfying  the following properties:
\begin{enumerate}
\item $D|_{M^0}$ is of type $\frak m$;
\item $\mathcal C|_{M^0}\subset \mathbb P D|_{M^0}$ defines an ${\bf S}$-structure on $(M^0, D|_{M^0})$ and there corresponds to a $G_0$-structure $\mathscr P$ on $(M^0, D|_{M^0})$, where $G_0=G(\widehat{\bf S})$.
\end{enumerate}
Define a vector bundle $\mathcal H^2_k$ on $M^0$  by  $\mathcal H^2_k:=\mathscr P \times _{G_0} H^2(\frak m, \frak g)_k$.

\begin{lemma} \label{lem:section vanishes} %We claim that
  $H^0(M^0, \mathcal H^2_k)$ is zero for all $k \geq 1$.
\end{lemma}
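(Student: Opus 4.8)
The plan is to reduce to finitely many degrees, restrict a hypothetical nonzero section to a general minimal rational curve, and derive a contradiction from a degree computation. By Proposition \ref{prop:H2 vanishing} the module $H^2(\frak m,\frak g)_k$, hence the bundle $\mathcal H^2_k$, vanishes for all $k\geq 3$, so $H^0(M^0,\mathcal H^2_k)=0$ is automatic there; it therefore suffices to treat $k=1$ (for type $F_4$) and $k\in\{1,2\}$ (for type $B_m$). Fix such a $k$ and let $\sigma\in H^0(M^0,\mathcal H^2_k)$. First I would show that $\sigma$ restricts to zero on a general standard minimal rational curve, and then spread this vanishing out by density.

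Let $C=f(\mathbb P^1)$ be a general standard member of $\mathcal K$ lying in $M^0$, as produced in the proof of Proposition \ref{extension to rationally connected open subset}. Along $C$ the splitting types of the bundles associated to the representations $V$ and $W$ of the reductive part $SL(V)\times SL(W)\times\mathbb C^*$ of $G_0$ are pinned down by Lemma \ref{lem:splitting type of V}, namely $\mathcal V|_C=\mathcal O(1)\oplus\mathcal O^{\dim V-1}$ and $\mathcal W|_C=\mathcal O\oplus\mathcal O(-1)^{\dim W-1}$. Equivalently there is a cocharacter $\lambda_C$ acting on $V$ with weights $(1,0,\dots,0)$ and on $W$ with weights $(0,-1,\dots,-1)$, and the degree along $C$ of a line summand attached to a weight-$\mu$ vector equals $\langle\mu,\lambda_C\rangle$; this recovers $f^*\frak g_{-1}=f^*D=\mathcal O(2)\oplus\mathcal O(1)^p\oplus\mathcal O^r\oplus\mathcal O(-1)^s$ on $\frak g_{-1}=V\oplus(\Sym^2V\otimes W)$. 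Since $\mathcal H^2_k$ is associated to $\mathscr P$ through the $G_0$-module $H^2(\frak m,\frak g)_k$, the bundle $f^*\mathcal H^2_k$ carries a filtration whose graded pieces are the line bundles $\mathcal O(\langle\mu,\lambda_C\rangle)$ indexed by the weights $\mu$ of $H^2(\frak m,\frak g)_k$.

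The heart of the argument is then to compute these weights and show each pairs negatively with $\lambda_C$. Proposition \ref{prop:H2 vanishing} realizes $H^2(\frak m,\frak g)_k$ inside $\wedge^2\frak g_{-1}^*$ tensored with $\frak g_{-1}$, $U_{-1}$, or $U_0$ as specified there, and identifies the irreducible constituents via Kostant's theorem (Lemma \ref{lem:H2 vanishing semisimple}, cf. \cite{Ko61}) together with the refined $G_0$-decomposition of Lemma \ref{lem:finer exact sequences}. I would check, case by case in types $B_m$ and $F_4$ and for each occurring degree $k$, that every weight $\mu$ of $H^2(\frak m,\frak g)_k$ satisfies $\langle\mu,\lambda_C\rangle<0$. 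Granting this, all graded pieces of $f^*\mathcal H^2_k$ are negative line bundles on $\mathbb P^1$, so $H^0(\mathbb P^1,f^*\mathcal H^2_k)=0$ by the long exact cohomology sequences of the filtration, whence $\sigma|_C=0$. As a general member of $\mathcal K$ sweeps out a Zariski dense open subset of $M^0$ and $\mathcal H^2_k$ is locally free, $\sigma$ vanishes on a dense set and hence identically, which is the assertion.

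The main obstacle is exactly the negativity $\langle\mu,\lambda_C\rangle<0$. The naive bound from the ambient tensor bundle is useless: already $\wedge^2\frak g_{-1}^*|_C$ contains summands of degree up to $2$ (coming from the $\mathcal O(1)^s$ dual to $\mathcal O(-1)^s\subset f^*D$), and $U_{-1}|_C$, $U_0|_C$ have positive summands, so the ambient bundle is \emph{not} negative and a hypothetical section need not vanish on this coarse grounds. What saves the day is that the harmonic cohomology classes avoid these top weights: one must feed the explicit Kostant highest weights, expressed in the $V$/$W$ coordinates adapted to $\lambda_C$ via Lemma \ref{lem:finer exact sequences}, into the pairing and verify the strict inequality for every constituent. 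This finite but genuinely representation-theoretic check, not any soft positivity principle, is where the content of the Lemma resides.
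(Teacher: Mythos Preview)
Your strategy is genuinely different from the paper's, and the gap you flag is real. You reduce everything to a single representation-theoretic claim — that every weight of $H^2(\frak m,\frak g)_k$ pairs strictly negatively with the cocharacter $\lambda_C$ — but you do not carry it out, and it is not clear that it is even true. For instance, Proposition \ref{prop: computation of H2 cohomology} produces a summand of the harmonic representative coming from $H^2(U_-,U)_1\subset \wedge^2 U_{-1}^*\otimes U_{-1}=\wedge^2 V^*\otimes V$; under your $\lambda_C$ (weights $(1,0,\dots,0)$ on $V$) this module carries a weight~$0$ piece in type $B_m$ and even a weight~$1$ piece in type $F_4$. Unless you can show these specific classes are coboundaries in $H^2(\frak m,\frak g)$ — which requires going back through the normalisation in Proposition \ref{prop: computation of H2 cohomology} and is not automatic — your negativity fails and the whole argument collapses. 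The remark that ``the harmonic cohomology classes avoid these top weights'' is exactly what has to be proved, and you have not provided a mechanism for it beyond hope.

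The paper avoids this problem altogether. It embeds $\mathcal H^2_k$ into an explicit associated bundle ($\wedge^2 D^*\otimes\mathcal V$, $\wedge^2 D^*\otimes D$, or $(\wedge^2 D\otimes D)^*\otimes\mathcal V$ according to Proposition \ref{prop:H2 vanishing}) and shows that \emph{this ambient bundle} has no global sections on $M^0$, despite having nonnegative summands along $C$. The key idea is to vary the curve through a fixed point: for any $[\beta]\in\mathcal C_x$ the degree count forces $\varphi_x(\beta\wedge D_x)$ (or $\varphi_x(\beta\wedge T^{(2)}_\beta\widehat{\mathcal C}_x\otimes D_x)$ in the $k=2$ case) into the one-dimensional subspace $\mathcal V_x\cap T_\beta\widehat{\mathcal C}_x$, and letting $\beta$ run over $\mathcal C_x$ squeezes the image of $\varphi_x$ into a $G_0$-invariant degenerate subspace, hence zero. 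This two-curve interplay at a single point is absent from your single-curve splitting argument, and it is precisely what handles the nonnegative summands you correctly worry about.
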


\begin{proof}
By Proposition \ref{prop:H2 vanishing}, it suffices to show that the followings;
\begin{enumerate}
\item $H^0(M^0, \wedge ^2 D^* \otimes \mathcal V)=0$ when $(\frak m, \frak g_0)$ is of type $(B_m, \alpha_{m-1}, \alpha_m )$ for $m>3$ or of type $(F_4, \alpha_2, \alpha_3 )$;
\item $H^0(M^0, \wedge ^2 D^* \otimes D)=0$ when $(\frak m, \frak g_0)$ is of type $(B_3, \alpha_{2}, \alpha_3 )$;
\item  $H^0(M^0, (\wedge^2 D \otimes D)^* \otimes \mathcal V)=0$ when $(\frak m, \frak g_0)$ is of type $(B_m, \alpha_{m-1}, \alpha_m )$ for $m \geq 3$;
%\item $H^0(M^0, \wedge ^2 D^* \otimes \mathcal V)=0$ when $(\frak m, \frak g_0)$ is of type $(F_4, \alpha_2, \alpha_3 )$;
\end{enumerate}

We will adapt an argument similar to the proof of Proposition 6.2 of \cite{HH}. Assume that $(\frak m, \frak g_0)$ is either of type $(B_m, \alpha_{m-1}, \alpha_m )$ for $m > 3$ or of type $(F_4, \alpha_2, \alpha_3 )$. Let   $\varphi :\wedge^2 D \rightarrow \mathcal V$  be a nontrivial  vector bundle map.    For  $x \in M^0$ and $\beta \in D_x$ with $[\beta]\in \mathcal C_x$, take $C$ to be a member of $\mathcal K$ passing through $x$ with $[T_xC] =[\beta]$. Then $TC \wedge D|_C $  is decomposed as a sum of $\mathcal O(a)$'s with $a \geq 1$. Thus $\varphi|_C$ maps $TC \wedge D|_C $ into the $\mathcal O(1)$-factor of $\mathcal V|_C$, the intersection of $\mathcal V_x $ with $T_{\beta}\widehat{\mathcal C}_x$. Applying this argument to a general $[\beta] \in \mathcal C_x$, we see that the image of $\varphi_x$ is contained in the intersection
\begin{equation*}
    \cap_{[\beta] \in \mathcal C_x}(T_{ \beta }\widehat{\mathcal C}_x \cap \mathcal V_x),
\end{equation*}
which is $G_0$-invariant and degenerate in $\mathcal V_x$, contradicting to the fact that $\mathcal V$ is an irreducible $G_0$-bundle. Therefore, $\varphi_x$ is zero.

When $(\frak m, \frak g_0)$ is of type $(B_3, \alpha_{2}, \alpha_3 )$, Let $\varphi :\wedge^2 D \rightarrow \mathcal V$ and $\psi :\wedge^2 D \rightarrow \Sym^2 \mathcal V \otimes \mathcal W $  be nontrivial  vector bundle maps.  For  $x \in M^0$ and $\beta \in D_x$ with $[\beta]\in \mathcal C_x$, take $C$ to be a member of $\mathcal K$ passing through $x$ with $[T_xC] =[\beta]$. Then $TC \wedge D|_C $  is decomposed as a sum of $\mathcal O(a)$'s with $a \geq 1$. Thus $\varphi|_C$ maps $TC \wedge D|_C $ into the $\mathcal O(1)$-factor of $\mathcal V|_C$ and $\psi|_C$ maps $TC \wedge D|_C $ into the positive-factors of $\Sym^2 \mathcal V \otimes \mathcal W|_C$, the intersection of $\mathcal V_x $ with $T_{\beta}\widehat{\mathcal C}_x$ and the intersection of $\Sym^2 \mathcal V_x \otimes \mathcal W _x $ with $T_{\beta}\widehat{\mathcal C}_x$ . Applying this argument to a general $[\beta] \in \mathcal C_x$, we see that the image of $\varphi_x$ and $\psi_x$ are contained in the intersections
\begin{equation*}
    \cap_{[\beta] \in \mathcal C_x}(T_{ \beta }\widehat{\mathcal C}_x \cap \mathcal V_x) \text{ and }  \cap_{[\beta] \in \mathcal C_x} \{ T_{ \beta }\widehat{\mathcal C}_x \cap (\Sym^2 \mathcal V_x \otimes \mathcal W _x )\}
\end{equation*}
which are $G_0$-invariant and degenerate in $\mathcal V_x$ and $\Sym^2 \mathcal V_x \otimes \mathcal W _x$ respectively, contradicting to the fact that $\mathcal V$ and $\Sym^2 \mathcal V \otimes \mathcal W $ are irreducible $G_0$-bundles. Therefore, $\varphi_x$ and $\psi_x$ are zero. Hence, $H^0(M^0, \wedge ^2 D^* \otimes D)=0$.

Assume that $(\frak m, \frak g_0)$ is of type $(B_m, \alpha_{m-1}, \alpha_m )$ for $m \geq 3$. Let $\psi: \wedge ^2 D \otimes D \rightarrow \mathcal V$ be a vector bundle map.
Then, by the same argument as above, for  $x \in M^0$ and $[\beta] \in \mathcal C_x$ with $[\beta]=[T_xC]$, $\psi_x$ maps $\beta \wedge T^{(2)}_{\beta}\widehat{\mathcal C}_x \otimes D_x$ into the intersection $\mathcal V_x \cap T_{\beta}\widehat{\mathcal C}_x$.
 Thus the restriction of $\psi_x$ to $(\span \{\beta \wedge T_{\beta}^{(2)}\mathcal C_x : \beta \in \mathcal C_x\})\otimes D_x$ is zero.
By Proposition \ref{kernel of frobenius braket}, $\span \{\beta \wedge T_{\beta}^{(2)}\mathcal C_x: \beta \in \mathcal C_x\}$ is the kernel of the Frobenius bracket $[\,,\,]:\wedge ^2D \rightarrow TM/D$ at each point $x\in M^0$, and thus $\psi $ induces a bundle map
$$\mathcal E \otimes D \rightarrow \mathcal V.$$

On the other hand,
for  $x \in M^0$ and $[\beta] \in \mathcal C_x$ with $[\beta] = [T_xC]$, we have $\mathcal E|_C =\mathcal O(1)^{m-2} \oplus \mathcal O^t$. Therefore, by the induced map $\mathcal E \otimes D \rightarrow \mathcal V$, the image from $\mathcal E|_C \otimes TC$ to $\mathcal V|_C=\mathcal O(1) \oplus \mathcal O$ is zero. Since $[\beta] \in \mathcal C_x$ span $D_x$, the map $\psi_x$ is zero.

This completes the proof of Lemma \ref{lem:section vanishes}.
 \end{proof}

\begin{proof} [Proof of Theorem \ref{main results} in the case when $X$ is $(B_m, \alpha_{m-1}, \alpha_m)$ for $m \geq 3$ or $(F_4, \alpha_2, \alpha_3)$]
%We continue the proof of  Theorem \ref{main results}.
By Proposition \ref{prop:prolongation methods} and  Lemma \ref{lem:section vanishes}, $G_0$-structure $\mathscr P$  on $(M^0, D|_{M^0})$ is locally equivalent to the standard one. By Proposition \ref{S-str and G_0-str} together with  Proposition \ref{fundamental graded Lie algebra of vmrt}, the ${\bf S}$-structure on $M^0$ defined by $\mathcal C(M)|_{M^0}$ is locally equivalent to the standard one. By Theorem \ref{Cartan-Fubini},  a local map preserving the varieties of minimal rational tangents can be extended to a global biholomorphism. Hence, $M$ is biholomorphic to $X$.
This completes the proof of Theorem \ref{main results} in the case when $X$ is $(B_m, \alpha_{m-1}, \alpha_m)$ for $m \geq 3$ or $(F_4, \alpha_2, \alpha_3)$.
\end{proof}

\section{$(B_3,\alpha_1,\alpha_3)$ case} \label{sect:B3 case}

In this  section, we assume that $X$ is the horospherical variety $(B_3, \alpha_1, \alpha_3)$. We prove Theorem \ref{main results} in this case by the   method   we use for the horospherical varieties $(B_m, \alpha_{m-1}, \alpha_m)$ for $m \geq 3$ or $(F_4, \alpha_2, \alpha_3)$ in Section \ref{sect:horospherical BF type},  Section \ref{sect:H2 cohomology}, and Section \ref{sect:local equivalence}.

 Since the structure of the Lie algebra   of $\Aut(X)$ and the projective geometry of the variety of minimal rational tangents are less complicated, the computations are relatively shorter. \\

Let $V $ be the spin representation of $L_0=B_2$ and let $W$ be the standard representation of $B_2$.
By the isomorphism $B_2 \simeq C_2$, we may consider $V$ as the standard representation of $C_2$ with a nondegenerate skew symmetric bilinear form $\omega$ and $W$ as the  subspace   $\wedge^2_{\omega} V$ of $\wedge^2 V$ generated by $v \wedge u$ with  $\omega(v,u)=0$.

Set $${\bf U}:= V \oplus  W =  V \oplus \wedge^2_{\omega}V$$
Let ${{\bf S}}  \subset \mathbb P \bf U$ be the variety of minimal rational tangents of $X$ at the base point (Proposition \ref{vmrt of X}):
$${\bf S}=\mathbb P \{ v +  v\wedge u: v, u \in V, \ \omega(v,u)=0 \} \subset \mathbb P\bf U.$$

Let $\frak g= (\frak l + \mathbb C)  \triangleright U  $ be the Lie algebra of $\Aut(X)$ graded as in Proposition \ref{tangentsp of X} and $\frak m =\bigoplus_{p<0}\frak g_p$ be its negative part.

\subsection{Projective geometry of varieties of minimal rational tangents}
We list up properties of ${\bf S}$ as a projective subvariety of $\mathbb P{\bf U}$ as in Section \ref{sect:horospherical BF type}.

\begin{lemma}   [Lemma 1.17 of \cite{Pa} or Theorem 1.1 of \cite{Mi}]
The variety ${\bf S}$ is the odd symplectic Grassmannian $\Gr_{\omega}(2,5)$ and the automorphism group $\Aut^0(\widehat{\bf S})$ of the cone $\widehat{\bf S}$ is  $\PSp(5)=(\Sp(4)\times \mathbb C ^*)/\{\pm 1\} \ltimes \mathbb C ^4$.
\end{lemma}

%\begin{proof}
%If we add $v_0$ to extend $V$ to a 5-dimensional vector space $V'$ and let $v_1 \in V$, $v_1\wedge v_2 \in W$ be the highest weight vectors. Then the variety of minimal rational tangents which is given by the closure of the orbit $L_0.[v_1 + v_1\wedge v_2] \subset \mathbb P(V + W)$ can be identified with the closure of the orbit $L_0.[v_1\wedge (v_0+ v_2)] \subset \mathbb P(\wedge_{\omega}^2V')$ with a extended symplectic form $\omega$, which is the odd symplectic Grassmannian $\Gr_{\omega}(2,5)$ a horospherical varieties with Picard number one. The rest follows by Lemma 1.17 of \cite{Pa} or Theorem 1.1 of \cite{Mi}.
%\end{proof}

The tangent space $T_{\beta} \widehat{\bf S}$ at $\beta = v + v \wedge u \in \widehat{\bf S} $ is given by
$$T_{\beta}\widehat{{\bf S}} = \{ v' +  v' \wedge u +  v \wedge u' \in {\bf U} : v', u' \in V \}.$$
The second fundamental form $II_{\beta} : \Sym^2 T_{\beta}\widehat{{\bf S}} \rightarrow {\bf U} /T_{\beta}\widehat{{\bf S}}$ is
\begin{eqnarray*}
&& II_{\beta}(v' +  v' \wedge u, v'' +  v'' \wedge u) =0 \\
&& II_{\beta}(v' +  v' \wedge u, v \wedge u') =  v' \wedge u' \\
&& II_{\beta}( v \wedge u',  v \wedge u'')=0,
\end{eqnarray*}
where $v', v'', u', u'' \in V$. Thus the second osculating space $T_{\beta}^{(2)}\widehat{{\bf S}}$ is $W+T_{\beta} \widehat{\bf S}=\bf U$.
Therefore, the third fundamental form $III_{\beta} : \Sym^3 T_{\beta}\widehat{{\bf S}} \rightarrow {\bf U} /T_{\beta}^{(2)}\widehat{{\bf S}}$ is zero. \\

Let $G_0$ be the subgroup of $G=\Aut(X)$ with Lie algebra $\frak g_0$. From the exact sequence of $G_0$-modules
$$0 \rightarrow V \rightarrow {\bf U} \rightarrow W \rightarrow 0$$
we get the following exact sequences.

\begin{lemma} \label{lem: B_3 exact sequences} Let $\beta=v_1 + v_1\wedge v_2$ be an element of $\widehat{\bf S}$, where $v_1, v_2 \in V$ such that $v_1 \wedge v_2 \neq 0$. For $i=1,\cdots 4$, denote by $V_i$ the subspace of $V$ generated by $v_i$ and $\omega(v_i,v_j)=\delta_{i+2,j}$ for $i=1,2$ $1\leq j \leq 4$. Then
we have the following exact sequences.
\begin{eqnarray*}
0 \rightarrow &\mathbb C \beta& \rightarrow  V_1 \wedge V_2 \rightarrow 0 \\
0 \rightarrow V_1 \oplus V_2 \rightarrow &T_{\beta}{\widehat{\bf S}}/\mathbb C \beta& \rightarrow \{ (V/V_1) \wedge V_2 +  V_1 \wedge (V/V_2) \}_{\omega} \rightarrow 0 \\
0 \rightarrow V/(V_1 \oplus V_2) \rightarrow &{\bf U}/T_{\beta}\widehat{\bf S}& \rightarrow V_3 \wedge V_4 \rightarrow 0
\end{eqnarray*}
\end{lemma}

\begin{proof} We get the following exact sequences.
\begin{eqnarray*}
0 \rightarrow &\mathbb C \beta& \rightarrow  V_1 \wedge V_2 \rightarrow 0 \\
0 \rightarrow V_1 \oplus V_2 \rightarrow &T_{\beta}{\widehat{\bf S}}& \rightarrow \{ V \wedge V_2 +  V_1 \wedge V \}_{\omega} \rightarrow 0
\end{eqnarray*}
Denote that $(V \wedge V_2)\cap (V_1 \wedge V)=V_1 \wedge V_2$.
By taking the quotients we get the desired exact sequences.
\end{proof}

\begin{lemma}\label{lem: B_3 Lie alg} \
\begin{enumerate}
\item $\dim \frak g_{-1} = 9$ and $\dim \frak g_{-k} = 0$ for $k>1$
\item $\frak g_0=\aut^0(\widehat{\bf S})$
\end{enumerate}
\end{lemma}

\begin{proof}
Since $\frak m =\frak g_{-1}$ isomorphic to ${\bf U}$ as vector space, (1) follows. Since $\Aut^0(\widehat{{\bf S}})$ is equal to the linear automorphism group $G(\widehat{\bf S})$    and the induced map $\frak g(\widehat{\bf S}) \rightarrow \frak g_0(\frak m)$ is injective whose image   in $\mathfrak g_0(\frak m)$ agrees with $\mathfrak g_0   \subset \mathfrak g_0(\mathfrak m)$.
\end{proof}

\begin{proposition} \label{prop: B_3 parallel transprt vmrt}
Let ${\bf S} \subset \mathbb P {\bf U} $ be the variety of minimal rational tangents of
$(B_3,\alpha_1,\alpha_3)$ at the base point.
Let $\pi: \mathbb P \mathcal U\rightarrow \mathbb P^1$ be the projectivization of a holomorphic vector bundle $\mathcal U$ over  $  \mathbb P^1$  and let $\mathcal C \subset \mathbb P \mathcal U$ be an irreducible subvariety. Denote by $\varpi$ the restriction of $\pi$ to $\mathcal C$. Assume that
   \begin{enumerate}
   \item $\mathcal C_t:=\varpi^{-1}(t) \subset \mathbb P  \mathcal U_t:=\pi^{-1}(t)$ is projectively equivalent to ${\bf S} \subset \mathbb P {\bf U}$ for all $t \in \mathbb P^1 -\{t_1, \dots, t_k\}$; % but (possibly) finitely many points of $\mathbb P^1$;
  \item
      for  a general  section $\sigma \subset \mathcal C$ of $\varpi$, the relative second fundamental forms of $\mathcal C$ along $\sigma$ are constants.
     \end{enumerate}
Then for any $t \in \mathbb P^1$, $\mathcal C_t \subset \mathbb P(\mathcal U_t)$ is projectively equivalent  to ${\bf S} \subset \mathbb P({\bf U})$.
\end{proposition}

\begin{proof}
The Picard number of the odd Lagrangian Grassmannian $\Gr_{\omega}(n,2n+1)$ is one and ${\bf S} \subset \mathbb P({\bf U})$ is the minimal embedding by the line bundle $\mathcal O(1)$. The deformation rigidity of the odd Lagrangian Grassmannian $\Gr_{\omega}(n,2n+1)$ is also known in Theorem 1.2 of \cite{Park} and hence, for any $t \in \mathbb P^1$, $\mathcal C_t$ is biholomorphic to ${\bf S}$.
%If $\mathcal C_t \subset \mathcal U_t$ is degenerate subspace, the second fundamental form is not surjective.
 The constancy of the second fundamental form implies that  $\mathcal C_t \subset T_{\beta}^{(2)}\widehat{\mathcal C_t}=\mathcal U_t$ is non-degenerate. Hence, $\mathcal C_t \subset \mathbb P(\mathcal U_t)$ is projectively equivalent to the minimal embedding ${\bf S} \subset \mathbb P({\bf U})$ for all $t$.
\end{proof}

\subsection{$H^2$-cohomology}
 Now we compute Lie algebra cohomologies as in Section \ref{sect:H2 cohomology}.

\begin{lemma}\label{lem: B_3 H^1 and H^2}  For $X=(B_3,\alpha_1,\alpha_3)$, we have the followings:
\begin{enumerate}
\item[(i)] $H^1(\frak l_-, \frak l)_{k-1}$ vanishes except for $k=1$ and $$H^1(\frak l_-, \frak l)_{0} \subset \Hom(\frak l_{-1}, \frak l_{-1}).$$
\item[(ii)] $H^{ 2}(\frak l_-, U)_k$  vanishes except for $k=1$, and we have
\begin{eqnarray*}
    H^2(\frak l_-,  U)_1 &\subset& \wedge^2 \frak l_{-1}^* \otimes U_{-1}.
\end{eqnarray*}
\item[(iii)] except (i) and (ii), Lemma \ref{lem:H1 vanishing semisimple} and Lemma \ref{lem:H2 vanishing semisimple} are satisfies.
    \end{enumerate}

\end{lemma}
\begin{proof} Apply the Kostant theory to get the desired result. \end{proof}

\begin{lemma} \label{lem: cohomology lemma 2 B_3}
 For $A \in \Hom(\frak l_-, U)_1$, if the image of $\partial_0A: \wedge^2 \frak l_{-1} \rightarrow U_{-1}$ has dimension $\leq 1$, then we have $\partial_0 A=0$.
\end{lemma}

\begin{proof}
We recall the notions : let $\{x_{-\alpha}\}$ be a basis of $\frak l_{-1}$ consisting of root vectors and let $\{u_{\mu}\}$ ($\{u_{\lambda}\}$, respectively) be a basis of $U_{-1}$ ($U_0$, respectively) consisting of weight vectors. We may assume that $[x_{-\alpha}, u_{\lambda}] = u_{-\alpha+\lambda}$ if $-\alpha + \lambda$ is a weight.\\
For $A \in \Hom(\frak l_{-}, U)_1$, we have
 \begin{eqnarray*}
 A(x_{-\alpha}) &=& \sum_{\lambda} A_{\lambda, \alpha} u_{\lambda}.
\end{eqnarray*}

If $(\frak m, \frak g_0)$ is of type $(B_3, \alpha_{1}, \alpha_3)$, the action $ \frak l_{-1} \times U_{1} \rightarrow U_0$ (equivalently, $ \frak l_1 \times U_{-1} \rightarrow U_0$) is given by
\begin{center}
    \begin{tabular}{ r | c c c c c }
     $\times$ & $w_0$ & $w_1$ &  $w_2$ &  $w_3$ &  $w_4$ \\
 \hline

     $v^*_1$ &   $v_3$ &  $v_2$ &  $\cdot$ &  $\cdot$ & $-v_4$ \\

     $v^*_2$ &   $-v_4$ &  $-v_1$ &  $v_3$ &  $\cdot$ & $\cdot$ \\

     $v^*_3$ &   $-v_1$ &  $\cdot$ &  $-v_2$ &  $v_4$ & $\cdot$ \\

     $v^*_4$ &   $v_2$ &  $\cdot$ &  $\cdot$ &  $-v_3$ & $v_1$ \\

    \end{tabular}
\end{center}
where $\{v_i\}$, $1\leq i \leq 4$, is a basis of $V$ with the skew-symmetric 2-form $\omega(v_i,v_j)=\delta_{i+2,j}$ for $i=1,2$ and $1\leq j \leq 4$ and $\{w_i\}$ is a basis of $W$ given by $w_0=v_1\wedge v_3 - v_2 \wedge v_4$, $w_1=v_1\wedge v_2$, $w_2=v_2\wedge v_3$, $w_3=v_3\wedge v_4$ and $w_4=v_4\wedge v_1$.

Write $v_i$ as $u_{\mu_i}$. Then
$(\partial_0A)(x_{-\alpha}, x_{-\beta})$ is given by  $$\sum_{\mu_i}  \left( A_{\mu_i+ \alpha, \beta} - A_{\mu_i+\beta, \alpha} \right) u_{\mu_i}. $$

We can take $x_{-\alpha}$ and $x_{-\beta}$ in $\{w_0, w_1, w_2, w_3, w_4\}$ such that $x_{-\alpha} \neq x_{-\beta}$ and the coefficient of $u_{\mu_i}$ is zero.  Since  $(\partial_0A)(x_{-\alpha}, x_{-\beta})$ is parallel to each other for any choice of a pair $(x_{-\alpha},x_{-\beta})$, the coefficients are zero for any $\alpha$ and $\beta$.
\end{proof}

\begin{proposition} \label{H2 cohomology B3}
 $H^{ 2}(\frak m, \frak g)_k$ vanishes except for $k=1$, and
\begin{eqnarray*}
H^2(\frak m, \frak g)_1 &\subset& \wedge^2 \frak g_{-1}^* \otimes \frak g_{-1}. \end{eqnarray*}
\end{proposition}

\begin{proof} By Lemma \ref{lem: B_3 H^1 and H^2} and Lemma \ref{lem: cohomology lemma 2 B_3}, the same argument as in the proof of Proposition \ref{prop: computation of H2 cohomology} apply to the case when $X$ is $(B_3, \alpha_1, \alpha_3)$ to get the desired result.
\end{proof}

\subsection{Local equivalence of geometric structures} \label{sect:local equivalence B3}
We complete the proof of Theorem \ref{main results}  as in Section \ref{sect:local equivalence}.

 Let $M' \subset M $ be a  Zariski open subset such that for all $x \in M'$ satisfying $\mathcal C_x \subset \mathbb P TM_x$ is projectively equivalent to $ {\bf S} \subset \mathbb P {\frak g_{-1}}$.

Let $C $ be a standard minimal rational curve represented by $f:\mathbb P^1 \rightarrow M$ with $C \cap M' \not = \emptyset$ and $C \not\subset bad(\mathcal K)$. Then for a generic point $y \in C$, $\mathcal C_y \subset \mathbb PD_y$ is projectively equivalent to $\bf S\subset \mathbb P \frak g_{-1}$. By Proposition \ref{parallel transport of II}, the relative second fundamental form of $\mathcal C(M)$ along the lifting $C^{\sharp}$ of $C$ is constant, and we have
\begin{eqnarray*}
f^*TM &=& \mathcal O(2) \oplus \mathcal O(1)^5 \oplus \mathcal O^3.
\end{eqnarray*}

Furthermore,
the pull-back $(f^{\sharp})^*\widehat{T}^{\varpi} $ of the relative affine tangent bundle $ \widehat{T}^{\varpi} $ of $\mathcal C \subset \mathbb P(TM)$ is the positive part $ P:=\mathcal O(2) \oplus \mathcal O(1)^5$ of $f^*TM$, and
   the pull-back $(f^{\sharp})^*\widehat{T}^{(2), \varpi} $ of the relative  second osculating affine bundle   $ \widehat{T}^{(2), \varpi} $ of $\mathcal C \subset \mathbb P(TM)$ is the subbundle  $P^{(2)} :=\mathcal O(2) \oplus \mathcal O(1)^5 \oplus \mathcal O^3$ of $f^*TM$.

 By Proposition \ref{prop: B_3 parallel transprt vmrt}, for any $y\in C$, $\mathcal C_y \subset \mathbb PTM_y$ is projectively equivalent to $\bf S\subset \mathbb P \frak g_{-1}$.

\begin{lemma}\label{lem:splitting type of V B_3}
Let $\mathcal V$ and $\mathcal W$ be vector bundles on $M'$ associated with $V$ and $W$.
Let $C$ be a general member of $\mathcal K$ passing through $x \in M'$.
When $X$ is $(B_3, \alpha_{1}, \alpha_3)$, we have
$$\mathcal V|_C= \mathcal O(1)^2 \oplus \mathcal O^2 \text{ and } \mathcal W|_C= \mathcal O(2) \oplus \mathcal O(1)^3 \oplus \mathcal O.  $$
\end{lemma}

\begin{proof}
From Lemma \ref{lem: B_3 exact sequences}, we have the exact sequences

\begin{eqnarray*}
0 \rightarrow &\mathcal O(2) & \rightarrow \mathcal Q^{(0)} \rightarrow 0 \\
0 \rightarrow \mathcal F_0 \rightarrow &\mathcal O(1)^5 & \rightarrow \mathcal Q^{(1)} \rightarrow 0 \\
 0 \rightarrow \mathcal V/\mathcal F_0 \rightarrow &\mathcal O^3& \rightarrow \mathcal Q^{(2)} \rightarrow 0
\end{eqnarray*}
with
\begin{eqnarray*}
\mathcal Q^{(0)} &=& \wedge^2 \mathcal F_0 \\
\deg \mathcal Q^{(1)} &=& \deg(\mathcal F_0 \wedge_{\omega}\mathcal V/\mathcal F_0)\\
\deg \mathcal Q^{(2)} &=& \deg \wedge^2(\mathcal V/\mathcal F_0).
\end{eqnarray*}
Write $\mathcal F_0 =\mathcal O(a_1) \oplus \mathcal O(a_2)$, $\mathcal V/\mathcal F_0=\mathcal O(a_3) \oplus \mathcal O(a_4)$, $\mathcal Q^{(0)}=\mathcal O(b_1)$, $\mathcal Q^{(1)} = \mathcal O(b_2) \oplus \mathcal O(b_3) \oplus \mathcal O(b_4)$ and $\mathcal Q^{(2)} = \mathcal O(b_5)$.
Then $2=b_1=a_1 + a_2  $, $5=(a_1 + a_2) + b_2 +b_3 +b_4$ and $0 = (a_3+a_4) + b_5$ with $b_5=a_3+a_4$. Thus $b_5=0$.

From the second exact sequence,
$a_i \leq 1$ for $i=1,2$ and $b_j \geq 0$ for $j =2,3,4$. Thus the second exact sequence splits and $a_1 = a_2 = b_2 = b_3 = b_4 =1$. From the third exact sequence we get $a_3 =a_4 =0$.
\end{proof}

In sum, there exists a Zariski open subset $M'\subset M^0\subset M$ such that a general member  of $\mathcal K$ lies on $M^0$ and the varieties of minimal rational tangents $\mathcal C|_{M^0}\subset \mathbb P TM|_{M^0}$ defines an ${\bf S}$-structure on $ M^0 $.
Proposition \ref{S-str and G_0-str} and Lemma \ref{lem: B_3 Lie alg} implies that
there exist $G_0$-structure on $(M^0, D|_{M^0})$ corresponding to the ${\bf S}$-structure, where $G_0=G(\widehat{\bf S})$.

Define a vector bundle $\mathcal H^2_k$ on $M^0$  by  $\mathcal H^2_k:=\mathscr P \times _{G_0} H^2(\frak m, \frak g)_k$.

\begin{lemma} \label{lem: B_3 section vanishes}
  $H^0(M^0, \mathcal H^2_k)$ is zero for all $k \geq 1$.
\end{lemma}

\begin{proof} By proposition \ref{H2 cohomology B3} it suffices to show the vanishing $H^0(M^0, \wedge ^2 D^* \otimes D)=0$. 
Let $\varphi :\wedge^2 D \rightarrow \mathcal V$ and $\psi :\wedge^2 D \rightarrow \mathcal W$  be nontrivial  vector bundle maps.  For  $x \in M^0$ and $\beta \in D_x$ with $[\beta]\in \mathcal C_x$, take $C$ to be a member of $\mathcal K$ passing through $x$ with $[T_xC] =[\beta]$. Then $TC \wedge D|_C $  is decomposed as a sum of $\mathcal O(a)$'s with $a \geq 1$. Thus $\varphi|_C$ maps $TC \wedge D|_C $ into the $\mathcal O(1)$-factor of $\mathcal V|_C$ and $\psi|_C$ maps $TC \wedge D|_C $ into the positive-factors of $\mathcal W|_C$, the intersection of $\mathcal V_x $ with $T_{\beta}\widehat{\mathcal C}_x$ and the intersection of $\mathcal W_x $ with $T_{\beta}\widehat{\mathcal C}_x$ . Applying this argument to a general $[\beta] \in \mathcal C_x$, we see that the image of $\varphi_x$ and $\psi_x$ are contained in the intersections
\begin{equation*}
    \cap_{[\beta] \in \mathcal C_x}(T_{ \beta }\widehat{\mathcal C}_x \cap \mathcal V_x) \text{ and }  \cap_{[\beta] \in \mathcal C_x}(T_{ \beta }\widehat{\mathcal C}_x \cap \mathcal W_x)
\end{equation*}
which are $G_0$-invariant and degenerate in $\mathcal V_x$ and $\mathcal W_x$ respectively, contradicting to the fact that $\mathcal V$ and $\mathcal W$ are irreducible $G_0$-bundles. Therefore, $\varphi_x$ and $\psi_x$ are zero. Hence, $H^0(M^0, \wedge ^2 D^* \otimes D)=0$.
\end{proof}

\begin{proof}[Proof of Theorem \ref{main results} in the case when $X$ is $(B_3, \alpha_1,\alpha_3)$]
%We continue the proof of  Theorem \ref{main results}.
By Proposition \ref{prop:prolongation methods} and  Lemma \ref{lem: B_3 section vanishes}, $G_0$-structure $\mathscr P$  on $ M^0 $ is locally equivalent to the standard one. By Proposition \ref{S-str and G_0-str}, the ${\bf S}$-structure on $M^0$ defined by $\mathcal C(M)|_{M^0}$ is locally equivalent to the standard one. By Theorem \ref{Cartan-Fubini},  a local map preserving the varieties of minimal rational tangents can be extended to a global biholomorphism. Hence, $M$ is biholomorphic to $X$.
This completes the proof of Theorem \ref{main results} in the case when $X$ is $(B_3, \alpha_1,\alpha_3)$.
\end{proof}

\end{document}